\numberwithin{equation}{section}
\theoremstyle{plain}
\newtheorem{theorem}[subsection]{Theorem}
\newtheorem{proposition}[subsection]{Proposition}
\newtheorem{lemma}[subsection]{Lemma}
\theoremstyle{definition}
\newtheorem{definition}[subsection]{Definition}
\newcommand{\supp}{\textrm{supp }}
\newcommand{\Z}{\mathbb Z}
\newcommand{\C}{\mathbb C}
\newcommand{\N}{\mathbb N}
\newcommand{\R}{\mathbb{R}}
\begin{document}

\title[Effective equidistribution]{Effective equidistribution of translates of large submanifolds in semisimple homogeneous spaces}

\author{Adri\'an Ubis}
\address{Departamento de Matem\'aticas \\ Universidad Aut\'onoma de Madrid \\ Madrid 28049 \\ Spain
}
\email{adrian.ubis@uam.es}

\thanks{}

\subjclass{}

\begin{abstract}
Let $G=SL_2(\R)^d$ and $\Gamma=\Gamma_0^d$ with $\Gamma_0$ a lattice in $SL_2(\R)$. Let $S$ be any  ``curved''  submanifold of small codimension of a maximal horospherical subgroup of $G$ relative to an $\R$-diagonalizable element $a$ in the diagonal of $G$. Then for $S$ compact our result can be described by saying that $a^n \text{vol}_S$  converges in an effective way to the volume measure of $G/\Gamma$ when $n\to \infty$, with $\text{vol}_S$ the volume measure on $S$.

\end{abstract}

\maketitle

\section{Introduction and results}

Let $G$ be a connected Lie group without compact factors and $\Gamma$ a lattice in $G$.  For any $a$ Ad-semisimple element of $G$, we can consider (see \cite{kleinbock_shah_starkov}) the \emph{expanding horospherical subgroup} relative to $a$
\[
 U^+ = \{ g\in G: \lim_{n\to \infty} a^{-n} g a^{n} = e \}.
\]
An element $u\in G$ is in $U^+$ whenever $d_{G}(a^{-n}u,a^{-n})\to 0$ as $n\to\infty$, where $d_G$ is  a fixed right $G$-invariant distance on $G$.  This says that the action $u\mapsto a^{-1}u$ contracts regions of $U^+$, so the opposite action $u\mapsto au$ expands regions of $U^+$.

Then, one would expect $a^nVx_0$ to be quite large inside $G/\Gamma$ for any $x_0\in G/\Gamma$ and $V$ open set in $U^+$. In fact, if $U^+$ is maximal the following much stronger equidistribution result is known \cite{veech,shah_horospheres} (see \cite[Theorem 3.7.8]{kleinbock_shah_starkov}):  for any probability measure $\lambda$ on $U^+$ which is absolutely continuous with respect to a Haar measure on $U^+$ we have $a^n\lambda^*\to \mu_G$
where $\lambda^*$ is the image of $\lambda$ onto $G/\Gamma$ under the map $g\mapsto gx_0$ for a fixed $x_0\in G/\Gamma$, and $\mu_G$ is the probability Haar measure on $G/\Gamma$, namely
\begin{equation}\label{horosphere_equidistribution}
 \lim_{n\to \infty} \int_{U^+} f(a^n u x_0) \, d\lambda(u)=\int_{G/\Gamma} f(r) \, d\mu_G(r)
\end{equation}
for any $f\in C_c(G/\Gamma)$.
In particular this implies $\overline{\cup_{n} a^n V x_0 } =G/\Gamma$. Actually the result in \cite{shah_horospheres} is much more general than (\ref{horosphere_equidistribution}).

In \cite{gorodnik,shah_icm} N. Shah raised the question of trying to generalize  this result to some singular measures $\lambda$ on $U^+$, in particular to find conditions on a submanifold $S$ of $U^+$  which make the  probability volume measure $\lambda=\lambda_S$ supported on $S$ satisfy (\ref{horosphere_equidistribution}). 

In \cite{shah_curves} this question was solved for the case $G=SO(d,1)$, where it was shown that (\ref{horosphere_equidistribution}) holds whenever $\gamma:[0,1]\to U^+$ is a real-analytic curve  with $\gamma([0,1])x_0$  not contained in any proper subsphere of $G/\Gamma$. This was extended to $C^m$ curves in \cite{shah_curves_cm}.

In this work we intend to study the case $G=SL_2(\R)^d$, $\Gamma=\Gamma_0^d$ with $\Gamma_0$ a lattice in $SL_2(\R)$, $a$ an element in the diagonal of $G$ and $S$ a  submanifold of $U^+$ of small codimension  not contained in an affine subspace of $U^+$.  Our methods will be Fourier-analytic and will give effective rates of decay; on the other hand, in order to prove (\ref{horosphere_equidistribution}) we will need to impose a curvature condition on $S$. I do not know whether the ideas from \cite{shah_curves} could be applied to this case. 

%
%

In $G=SL_2(\R)^d$ every semisimple element $a$ in the diagonal of $G$ that generates a maximal horospherical subgroup is conjugate to
\begin{equation}\label{translation_definition}
 a_y=(a(y),a(y),\ldots, a(y))    \qquad    a(y)=
 \begin{pmatrix}
  \sqrt y & 0 \\
  0 & 1/\sqrt y\\
 \end{pmatrix}
\end{equation}
with $0<y$, so it is enough to study the horospherical subgroup corresponding to that element with $0<y<1$, which is
\begin{equation}\label{horosphere_definition}
 U^+=\{u_t: t\in \R^d\}     \quad  u_t=(u(t_1),\ldots, u(t_d))  \quad  u(t)=
 \begin{pmatrix}
  1 & 0 \\
  t & 1 \\
 \end{pmatrix}.
\end{equation}
We then have that $U^+$ and $\R^d$ are isomorphic Lie groups, so we can think of $S$ as a submanifold of $\R^d$. Now we are going to impose on $S$ the following curvature condition, which is an strengthening of the fact that $S$ is not contained in any proper affine subspace of $\R^d$.

\begin{definition}[Totally curved submanifold]
Let $S$ be a submanifold of $\R^d$ of codimension $n\le d/2$. For any $p\in S$, we shall say that $S$ is \emph{totally curved at} $p$ if the second fundamental form (see \cite{kobayashi_nomizu})
\[
 \text{II}_p: T_p S\times T_p S \to (T_p S)^{\perp}
\]
satisfies $\text{II}_p(V\times T_p S)=(T_p S)^{\perp}$ for every $V$ subspace of $T_p S$ of dimension $n$. We shall say that $S$ is \emph{totally curved} if the set of points at which $S$ is curved is dense in $S$.
\end{definition}

Intuitively this condition is saying that the manifold is curved in every direction of $\R^d$.
In the case of $S$ being an hypersurface (namely $n=1$), $S$ is totally curved at $p$  precisely when it does not have zero curvature at that point. In general we shall show that for $S$ not to be totally curved its coordinates must satisfy a certain differential equation, so a generic submanifold of $\R^d$ will be totally curved. That equation is just $R(p)=0$, where $R$ is the complex resultant of the polynomials $s_j$, $0\le j\le n-1$ defined in Proposition \ref{totally_curved_char}.

An example of the exceptional submanifolds that we want to avoid with our curvature condition is $S=\gamma\times S_1$, with $\gamma$ a curve in $\R^2$ and $S_1$ a submanifold of $\R^{d-2}$; in order to prove (\ref{horosphere_equidistribution}) for $\lambda=\lambda_S$ we would need to prove it for $\lambda_{\gamma}$ in the case $d=2$, and then we would lose the small codimension condition for $S$. This indicates that our condition on $S$ is a natural one for our context.

As our results are quantitative, we need to control the smoothness of $f$ to measure the decay in (\ref{horosphere_equidistribution}); for that purpose we shall use Sobolev norms

\begin{definition}[Sobolev norms]
Any $X$ in the Lie algebra  $\mathfrak g$ of $G$ acts on $C^{\infty}(G/\Gamma)$ by {\color{black}$Xf(g\Gamma)=\frac{d}{dt} f (e^{tX}g\Gamma)|_{t=0}$.} Thus, by fixing a basis $\mathfrak B$ of $\mathfrak g$ we can define the $L^{\infty}$ \emph{Sobolev norms} as
\[
 \|f\|_{S^j}=\sum_{\text{deg}(\mathfrak{D}) \le j}  \| \mathfrak D f\|_{L^{\infty}(G/\Gamma, \mu_G)}
\]
for any $j\ge 0$, where $\mathfrak D$ runs over all the monomials in $\mathfrak B$ of degree at most $j$.
\end{definition}

With those definitions, we can already state  our main result.
\begin{theorem}[Main result]
\label{main}
Let $\Gamma_0$ be a lattice in $SL_2(\R)${\color{black}, with $0<r\le 1/2$ any lower bound for the smallest non-zero eigenvalue of the hyperbolic Laplacian on $\mathfrak H/\Gamma_0$. Then, for}  any $S$ real-analytic totally curved submanifold of {\color{black}$U^+\le SL_2(\R)^d$} of codimension {\color{black}$n<\frac{r^4}{700^2} d$}, with $U^+$ as in (\ref{horosphere_definition}), and any $\lambda_S$ probability measure  with $C_c^{\infty}$ Radon-Nikodym derivative w.r.t. the volume measure on $S$ we have
\[
 |\int_{U^+} f(a_yux_0) \, d\lambda_S(u) -\int_{G/\Gamma} f \, d\mu_{G} |\ll_{\lambda_S,x_0} y^{c} \|f\|_{S^d}
\]
for some $c>0$ and any $f\in C^{\infty}(G/\Gamma)$, $x_0\in G/\Gamma=SL_2(\R)^d/\Gamma_0^d$ and $a_y$ as in (\ref{translation_definition}) , where $c$ just depends on $d$ and $S$.
\end{theorem}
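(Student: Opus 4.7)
The strategy is to exploit the product structure $G/\Gamma = (SL_2(\R)/\Gamma_0)^d$ and combine effective horocycle equidistribution on each of the $d$ factors with Fourier decay of the surface measure on $S$. Since $L^2(G/\Gamma)$ is the completed tensor product $\bigotimes_{i=1}^d L^2(SL_2(\R)/\Gamma_0)$, I would first approximate $f$, up to an error controlled by $\|f\|_{S^d}$, by a finite sum of pure tensors $\phi_1 \otimes \cdots \otimes \phi_d$, each $\phi_i$ smooth in an irreducible $SL_2(\R)$-isotypic component of Casimir eigenvalue $\ll (\log 1/y)^{O(1)}$. For such a tensor,
\[
 \int_{U^+} f(a_y u_t x_0)\, d\lambda_S(u_t)
 \;=\; \int_S \prod_{i=1}^d \phi_i\bigl(a(y) u(t_i) x_i\bigr)\, d\lambda_S(t).
\]

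On each factor I would then use the horocycle Fourier (Whittaker/Kirillov) expansion of automorphic forms to write
\[
 \phi_i\bigl(a(y) u(t) x_i\bigr) - \int_{SL_2(\R)/\Gamma_0} \phi_i\, d\mu
 \;=\; \sum_{n \in \Z \setminus \{0\}} c_n^i(y, x_i)\, e(n t) \;+\; (\text{continuous spectrum}),
\]
with Whittaker-type bounds $|c_n^i(y, x_i)| \ll y^{\sigma}(1+|n|y)^{-A}$, where $\sigma>0$ comes from the spectral gap of $\Gamma_0$ and $A$ is arbitrary (with polynomial dependence on the Casimir eigenvalue). Multiplying these expansions through the product yields, modulo the main term $\int_{G/\Gamma} f\, d\mu_G$, a sum
\[
 \sum_{\vec n \in \Z^d \setminus \{0\}} C_{\vec n}(y, x_0)\, \widehat{\lambda_S}(\vec n),
 \qquad
 \widehat{\lambda_S}(\vec n) := \int_S e(\vec n \cdot t)\, d\lambda_S(t),
\]
in which $C_{\vec n}(y, x_0)$ is the product of the Whittaker coefficients on the slots where $n_i \ne 0$ and of the spatial means $\int \phi_i$ on the other slots; total curvature has not yet entered.

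The heart of the argument will be a uniform Fourier-decay bound
\[
 |\widehat{\lambda_S}(\vec n)| \;\ll\; (1+|\vec n|)^{-\alpha}
\]
for some $\alpha = \alpha(d, S) > 0$, extracted from the totally curved hypothesis. In codimension $n > 1$ one cannot invoke nonvanishing Gauss curvature; instead, I would use the condition $\mathrm{II}_p(V \times T_p S) = (T_p S)^{\perp}$ for every $n$-dimensional subspace $V \subset T_p S$ (equivalently $R(p) \ne 0$, via Proposition \ref{totally_curved_char}) to produce, for every direction $\vec n \in \R^d$, a quadratic-nondegenerate slice of $S$ witnessing oscillation of the phase $\vec n \cdot t$, and then iterate stationary-phase / van-der-Corput to accumulate $\alpha$ worth of decay. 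Balancing the two decays by splitting the $\vec n$-sum at $|\vec n| \sim y^{-\beta}$ --- low frequencies contributing $\ll y^{\sigma - O(\beta)}$ via Whittaker, high frequencies contributing $\ll y^{\alpha\beta}\|f\|_{S^d}$ via the surface-measure decay together with Sobolev control of the tail of the Casimir spectral sum --- and optimizing $\beta$ then delivers $y^c \|f\|_{S^d}$.

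The main obstacle will be the uniform Fourier decay of $\lambda_S$. For critical directions $\vec n$ the phase $\vec n \cdot t$ may be nearly linear along an entire sub-tangent-space of $S$, and the totally curved hypothesis is precisely what rules this out in every direction. Quantifying it with a rate $\alpha$ that is uniform in $\vec n$, and which does not degrade too quickly with the codimension $n$, is the core technical content, and is what forces the small-codimension assumption $n < \gamma d$ rather than merely $n \le d/2$.
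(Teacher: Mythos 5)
Your overall skeleton --- reduce to factorizable (pure-tensor) data, treat each $SL_2(\R)$ factor by effective horocycle dynamics, and pair this against Fourier decay of $\lambda_S$ coming from total curvature --- does match the structure of the paper's reductions (Lemmas \ref{reduction_to_bounded_support}--\ref{reduction_to_factorizable}, Proposition \ref{from_group_to_real}). However, two steps as written would fail.

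First, the discrete Fourier series in $t$ does not exist for a generic basepoint. The orbit map $t\mapsto a(y)u(t)x_i$ is periodic in $t$ only when $x_i$ lies on a closed horocycle, and the Kirillov/Whittaker expansion of an automorphic form is an expansion at the cusp in Iwasawa coordinates, not a Fourier series of the function $t\mapsto\phi_i(a(y)u(t)x_i)$ for general $x_i$. The paper's replacement is Lemma \ref{uncorrelation_horocycles_characters}: it controls the \emph{continuous} Fourier transform $\int_\R f(a(y)u(t)x_0)\psi(t)e(ct)\,dt$ via exponential mixing, uniformly in $c$ (no gain in $|c|$, unlike your $(1+|n|y)^{-A}$). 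So the $\Z^d$ sum must become an $\R^d$ integral, and the Whittaker-type coefficient bounds must be replaced by mixing estimates.

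Second --- and this is the deeper gap --- pointwise Fourier decay $|\widehat{\lambda_S}(\xi)|\ll(1+|\xi|)^{-\alpha}$, even if it held uniformly in direction, cannot close the argument for any $\alpha$ that total curvature can supply. The function $F(t)=\rho(t)\bigl(f(a_yu_tx_0)-\int f\,d\mu_G\bigr)$ is Lipschitz with constant $\asymp T=y^{-1}$, so $\widehat F$ is spread over $|\xi|\lesssim T$, and $\int_{|\xi|\lesssim T}|\widehat F|$ can be of order $T^{d/2}$. Stationary phase on a curved $(d-n)$-dimensional manifold gives at best $\alpha\approx(d-2n)/2<d/2$ (cf.\ Proposition \ref{stationary_phase}), so $T^{d/2}\cdot T^{-\alpha}$ diverges, regardless of how one optimizes the splitting point. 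The paper circumvents this precisely by not using pointwise decay in the midrange: Proposition \ref{stationary_phase} extracts the \emph{oscillatory structure} of $\widehat\lambda_{S,x_0,\beta}$ near $\delta$-curved points, and Proposition \ref{van_der_corput} applies Weyl--van der Corput \emph{to the pairing with $\widehat f$}, collapsing the effective frequency range to $V\ll\delta\rho T^{\alpha}$, where the mixing conditions \eqref{horosphere_condition}--\eqref{character_condition} can absorb the remaining losses. In your sketch van der Corput appears only as a device to improve $\alpha$, which is not the role it needs to play. Finally, you also need the localization and sublevel-set machinery (Propositions \ref{localization_in_space}, \ref{sublevel}, \ref{local_to_global}) because ``totally curved'' only makes curved points dense, not ubiquitous, and a separate treatment of the singular case --- factorizable $f$ with many constant slots, where mixing is too weak --- via Proposition \ref{curved_implies_primitive} and Proposition \ref{main_singular}.
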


{\color{black}      
\begin{remark}
We think that our method could be extended to handle every lattice $\Gamma$ in $SL_2(\mathbb R)^d$, since the only thing that needs change is the part on mixing (section 2). For $\Gamma$ irreducible, we expect that the parameter $r$ would be replaced by a power of $1/p$, with $p=p(SL_2(\R)^d/\Gamma)$ the number defined on \cite{kelmer_sarnak}  that measures the spectral gap there; one would need to do as in \cite[Section 6]{venkatesh_margulis_einsiedler} to prove the necessary mixing rates. For the general lattice $\Gamma$, we know \cite{shimizu} that it must be commensurable to a completely reducible one $\prod_{j\le k} \Gamma_j$, with $\Gamma_j$ irreducible in $SL_2(\R)^{d_j}$. In that case we could take $p=\max_{j\le k} p(SL_2(\R)^{d_j}/\Gamma_j)$, by first reducing the problem to functions which are products of functions in $SL_2(\R)^{d_j}/\Gamma_j$, as we do in our case.
\end{remark}
}

In the Theorem, the $S^d$ norm can be {\color{black} replaced} by the $S^1$ norm at the expense of diminishing the constant $c$. As pointed out in \cite{shah_curves}, this result can be seen as a hyperbolic equivalent of the equidistribution in $\R^d/\Z^d$ of dilations of a real-analytic submanifold of $\R^d$ not contained in any proper affine subspace.

This result implies \eqref{horosphere_equidistribution} for $\lambda_S$ with a decay rate of $y^{cn}$. The idea of the proof is to see it as an equidistribution problem in $\R^d$, as in \cite{jones}, and to apply Fourier analysis there. Due to the conditions on $S$, it will be possible to show that $\widehat{\lambda}_S$ almost always behaves in a way similar to the Fourier transform of the sphere, and then one can take advantage of it by applying the Weyl-Van der Corput Method  together with the exponential mixing of one-parameter homogeneous flows of $G$ on $G/\Gamma$.

Actually, the approach described in the previous paragraph works just for functions $f$ for which there is a quick decay for the mixing. But for $G=SL_2(\R)^d$ this will always be the case except for functions coming from $SL_2(\R)^k$ with $k$ small, and for those the theorem can be directly proven.

In principle, our approach could be applied to any $G/\Gamma$ for which one has enough decay for the mixing, or if one can handle in another way the exceptional functions for which such a decay fails to exist.

Our Fourier-analytic arguments are local, in the sense that we use Stationary Phase to evaluate the Fourier Transform of the submanifold at each point. Perhaps it would be possible to extend our result to submanifolds with weaker curvature conditions by using global Fourier-analytic methods. On the other hand, curvature could be substituted by another kind of condition; for instance, in the case of $S$ being an affine subspace of large dimension, as suggested in \cite[Remark 3.1]{venkatesh_sparse_arxiv}, Fourier Analysis could be directly used to prove that our equidistribution result is true provided its  \emph{primitive dimension} is also large (see Definition \ref{Primitive manifold}).  Our methods cannot by themselves work for submanifolds of small dimension; the only advance on that difficult area seems to be \cite{venkatesh_margulis_einsiedler}.

Throughout the paper we will use the notation $e(t)=e^{2\pi i t}$ for $t\in\R$, $\widehat f(\xi)=\int_{\R^d} f(x) e(-\xi x) \, dx$ the Fourier Transform of $f$ in $\R^d$ with $\xi x$ the scalar product, and $f\ll g$ meaning $|f|\le C|g|$ for some positive constant $C>0$.

\section{Mixing and consequences}

We are going to deduce the exponential mixing for homogeneous one-parameter flows of $G$ on $G/\Gamma$ from the case $SL_2(\R)$ on $SL_2(\R)/\Gamma_0$. The action $h\Gamma\mapsto gh\Gamma$ of an element $g$ of $G$ on $G/\Gamma$ induces an action of $G$ on $C^{\infty}(G/\Gamma)$ described by $g\cdot f(h\Gamma)= f(gh\Gamma)$. On the other hand, we can consider the inner product in $L^2(G/\Gamma)$ with the Haar measure. With those definitions,  the following is a direct consequence of (9.6) in \cite{venkatesh_sparse} (for a more general result see \cite{kleinbock_shah_starkov})

\begin{lemma}[Exponential mixing for $SL_2(\R)$]
\label{mixing}
For any $g\in G$ and $f_1,f_2\in C^{\infty}(SL_2(\R)/\Gamma_0)$, with $\int f_2 \, d\mu_{SL_2(\R)}=0$ we have
\[
 \langle g\cdot f_1, f_2 \rangle \ll  \|g\|^{-r} \|f_1\|_{S^1} \|f_2\|_{S^1}
\]
where $\|g\|$ is the matrix norm of $g$ and $r$ is any number in $(0,1/2]$ {\color{black} that is a lower bound for  all nonzero eigenvalues of the hyperbolic Laplacian on $\mathcal H/\Gamma_0$.}
\end{lemma}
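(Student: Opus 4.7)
The plan is to deduce the estimate from the spectral decomposition of $L^2(SL_2(\R)/\Gamma_0)$ into irreducible unitary representations of $SL_2(\R)$ together with the known quantitative Howe--Moore decay of matrix coefficients. Because $f_2$ has vanishing integral, only the orthogonal complement $L_0^2$ of the constants contributes, so we may restrict attention to non-trivial irreducibles. The inner product $\langle g\cdot f_1,f_2\rangle$ then breaks into a direct integral/sum of matrix coefficients, and the task is to bound each piece uniformly.

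First I would recall the classification of unitary representations of $SL_2(\R)$: unitary principal series, discrete series, and complementary series parametrized by $s\in(0,1/2)$. For $K$-finite unit vectors, matrix coefficients in the principal and discrete series decay at least like $\|g\|^{-1/2}$, while for the complementary series with parameter $s$ they decay only like $\|g\|^{-2s}$ (these are the tempered versus non-tempered contributions). The hypothesis on the Laplacian, translated via the action of the Casimir on the spherical vectors, says precisely that any complementary series parameter occurring in $L_0^2(SL_2(\R)/\Gamma_0)$ satisfies $s(1-s)\geq r-r^2$, i.e.\ $s\leq 1-r$ (and symmetrically $\geq r$); hence all matrix coefficients between $K$-finite unit vectors decay at least like $\|g\|^{-r}$.

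The next step is to pass from $K$-finite vectors to arbitrary smooth functions while controlling the loss by the $S^1$ Sobolev norm. Each $f_i$ decomposes along its $K$-isotypic components, which for $SL_2(\R)$ are parametrized by an integer $m$ and on which the element of $\mathfrak{g}$ generating $K$ acts by $im$. A derivative in the $K$-direction therefore multiplies the $m$-th component by $|m|$, so bounding the $m$-th component by $(1+|m|)^{-1}\|f_i\|_{S^1}$ and summing the dyadic contributions against an elementary $(1+|m|)(1+|n|)$ loss in the matrix coefficient bound yields the desired clean estimate.

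The main obstacle is obtaining the uniformity of the matrix coefficient bound in the $K$-types and in the continuous parameter of the principal series, without spurious logarithmic factors; this is exactly what is packaged by equation (9.6) of \cite{venkatesh_sparse}, which treats a general semisimple group. So concretely, I would simply specialize that estimate to $G=SL_2(\R)$, verify that its hypothesis on the spectral gap matches the condition that all nonzero eigenvalues of the hyperbolic Laplacian on $\mathcal{H}/\Gamma_0$ lie above $r-r^2$, and read off the stated inequality.
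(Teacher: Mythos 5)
Your proposal is correct and takes essentially the same route as the paper: the paper's entire proof of this lemma is the one-line observation that it is a direct consequence of equation (9.6) in \cite{venkatesh_sparse}, which is precisely the citation you arrive at after sketching the spectral decomposition, the temperedness dichotomy, and the Sobolev control of $K$-types that underlie that estimate. The representation-theoretic preliminaries you spell out (spectral gap $\Leftrightarrow$ complementary-series parameter lying in $[r,1-r]$ $\Rightarrow$ matrix-coefficient decay $\|g\|^{-r}$, with $S^1$ norms absorbing the passage from $K$-finite vectors to smooth functions) are exactly the content packaged inside Venkatesh's bound, so the two arguments coincide.
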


Now, we are going to use the previous result in order to prove results similar to Lemmas 3.1 and 9.4 in \cite{venkatesh_sparse}. Their proofs will also be like the ones there.

\begin{lemma}[Equidistribution of translates of horocycles]
\label{equidistribution_horocycles}
For any $x_0\in SL_2(\R)/\Gamma_0$, $f\in C^{\infty}(SL_2(\R)/\Gamma_0)$ with $\int f \, d\mu_{SL_2(\R)}=0$ and $\psi\in C_c^{\infty}(\R)$ we have
\[
 \int_{\R} f(a(y)u(t) x_0) \psi(t)  \, dt \ll y^{r/3}\|\psi\|_{S^1} \|f\|_{S^1},
\]
where the implicit constant depends on $x_0$ and the length of the smallest interval containing the support of $\psi$.
\end{lemma}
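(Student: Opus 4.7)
The plan is to follow the thickening-mixing method of Venkatesh (as in Lemmas 3.1 and 9.4 of \cite{venkatesh_sparse}), converting the one-dimensional horocycle integral into a matrix coefficient on $L^2(SL_2(\R)/\Gamma_0)$ where Lemma \ref{mixing} can be invoked.

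First I would thicken the integral in the two directions transverse to the unipotent $u(t)$. Write $v(s)=\begin{pmatrix}1 & s \\ 0 & 1\end{pmatrix}$ for the opposite unipotent and $a(e^r)$ for the centralizer element. For a smooth bump $\Phi$ of unit mass supported on $\{(s,r) : |s|, |r| \le \eta\}$, with $\eta>0$ a small parameter to be optimized later, Sobolev regularity of $f$ yields
\[
I(y) := \int_\R f(a(y) u(t) x_0) \psi(t) \, dt = \iiint f(a(y) u(t) v(s) a(e^r) x_0) \psi(t) \Phi(s,r) \, dt \, ds \, dr + O\bigl(\eta \|f\|_{S^1} \|\psi\|_1\bigr),
\]
the error coming from comparing $f$ at nearby points of $SL_2(\R)/\Gamma_0$, with $\|f\|_{S^1}$ bounding the Lipschitz modulus.

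Using $g = u(t) v(s) a(e^r)$ as a local chart on $G = SL_2(\R)$, the thickened integral becomes $\int_G f(a(y) g x_0) \tilde\Phi(g) \, dg$ for a smooth $\tilde\Phi$ supported in an $\eta \times \eta$ tube around the horocycle arc $\{u(t) : t \in \supp \psi\}$ (the Jacobian of the chart being smooth and bounded). Projecting $\tilde\Phi$ to $G/\Gamma_0$ (the number of $\Gamma_0$-sheets in the support being bounded in terms of $x_0$ and $|\supp \psi|$), the integral is a matrix coefficient $\langle a(y) \cdot f, \bar\Xi\rangle$ for some $\Xi$ on $G/\Gamma_0$ with $\|\Xi\|_{S^1} \ll_{x_0, \psi} \eta^{-\kappa} \|\psi\|_{S^1}$, where $\kappa$ depends on how Lie-algebra derivatives interact with the $\eta$-scale bump. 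Since $\int f \, d\mu_{SL_2(\R)} = 0$, Lemma \ref{mixing} applied with the roles of $f_1, f_2$ swapped and $g = a(y)^{-1}$ (of matrix norm $y^{-1/2}$) gives
\[
|\langle a(y) \cdot f, \bar\Xi\rangle| \ll y^{r/2} \|f\|_{S^1} \|\Xi\|_{S^1} \ll_{x_0, \psi} y^{r/2} \eta^{-\kappa} \|f\|_{S^1} \|\psi\|_{S^1}.
\]

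Balancing with the thickening error by taking $\eta$ as a small power of $y$ produces the claimed bound $y^{r/3} \|f\|_{S^1} \|\psi\|_{S^1}$. The main technical obstacle is pinning down the correct exponent $\kappa$: a naive product bump of scale $\eta$ in two transverse directions gives $\kappa = 3$, which yields only $y^{r/8}$ after balancing. Reaching the sharp exponent $r/3$ requires either anisotropic thickening scales in the $v$ and $a$ directions (exploiting $a(y) v(s) = v(sy) a(y)$ so that the effective thickness in the stable direction shrinks with $y$), or a symmetric bump together with a second-order Taylor expansion of $f$ which cancels the first-order thickening contribution (replacing $\eta \|f\|_{S^1}$ by $\eta^2 \|f\|_{S^2}$); either refinement alters the balance sufficiently to give the stated exponent.
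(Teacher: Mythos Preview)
Your overall strategy---thicken to a bump on $SL_2(\R)$, project to $SL_2(\R)/\Gamma_0$, apply the mixing lemma, then optimize the thickening parameter---is exactly what the paper does. The only substantive difference is that the paper commits from the outset to the anisotropic thickening you mention at the end, and this is what makes the exponent work.

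Concretely, the paper thickens at scale $O(1)$ (not $\eta$) in the opposite unipotent direction and at scale $\delta$ only in the diagonal direction, writing
\[
\nu_\delta(f)=\int_{\R^3} f\bigl(u(t_1)^t\,a(e^{t_2})\,u(t_3)\,x_0\bigr)\,\rho(t_1)\,\rho_\delta(t_2)\,\psi(t_3)\,dt_1\,dt_2\,dt_3.
\]
The commutation $a(y)u(t_1)^t=u(yt_1)^t a(y)$ then makes the contribution of the unit-scale $t_1$-smear to the error only $O(y)$ after pushing $a(y)$ through, so the total thickening error is $O(\delta+y)\|f\|_{S^1}\|\psi\|_{S^1}$. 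Because only one transverse direction is thin, the projected bump satisfies $\|h_\delta\|_{S^1}\ll\delta^{-2}\|\psi\|_{S^1}$ (your $\kappa=2$, not $3$), and balancing $\delta$ against $y^{r}\delta^{-2}$ gives $\delta=y^{r/3}$.

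Two small points. First, your second suggested route (symmetric bump with second-order Taylor) would replace $\|f\|_{S^1}$ by $\|f\|_{S^2}$ in the error, so it does not prove the lemma as stated. Second, be careful with the mixing exponent: the paper uses $\|a(y)\|^{-r}\asymp y^{r}$ in its normalization, not $y^{r/2}$; with your $y^{r/2}$ the balance would land at $y^{r/6}$ even with $\kappa=2$.
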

\begin{remark}
Here the Sobolev norms for $\psi$ and $f$ are the ones in $\R$ and $SL_2(\R)/\Gamma_0$ respectively.
\end{remark}

\begin{proof}
 The idea of the proof is that $a(y)$ expands the $u(t)$ direction and contracts the rest, so a ball flowed by $a(y)$ transforms essentially into a segment in $U^+$.

Let $\rho\in C_c^{\infty}(\R)$ with $\int_{\R} \rho=1$.   For $\delta\in (0,1)$ let us consider the measure
\[
 \nu_{\delta}(f)=\int_{\R^{3}} f(u(t_1)^t a(e^{t_2}) u(t_3) x_0)  \rho(t_1)\rho_{\delta}(t_2)\psi(t_3)  \, dt_1 dt_2 dt_3
\]
with $\rho_{\delta}(t)=\delta^{-1}\rho(t/\delta)$.  We can write
\[
 \nu_{\delta} (a(y)\cdot f)= \int_{\R^{3}} f(a(y)u(t_1)^t a(e^{t_2}) u(t_3) x_0)  \rho(t_1)\rho_{\delta}(t_2)\psi(t_3)  \, dt_1 dt_2 dt_3
\]
and since $a(y)u(t_1)^t=u(y t_1 )^t a(y)$ we get that
\[
 \nu_{\delta} (a(y)\cdot f)= \int_{\R^{3}} f(u(y t_1)^t a(e^{t_2}) a(y) u(t_3) x_0)  \rho(t_1)\rho_{\delta}(t_2)\psi(t_3)  \, dt_1 dt_2 dt_3.
\]
Now, by the mean value theorem (see \cite{venkatesh_sparse}, before Lemma 2.2) $|f(x_2)-f(x_1)|\ll \|f\|_{S^1} d(x_2,x_1)$, with $d$ a fixed right $SL_2(\R)$-invariant distance on $SL_2(\R)/\Gamma_0$, and then due to the identity $\int_{\R}\rho=1$ we have  
\[
\nu_{\delta}(a(y)\cdot f)=I+O(\delta+y)\|f\|_{S^1}\|\psi\|_{S^1}
\]
with $I$ the integral in the statement of the lemma.
On the other hand, any $g\in SL_2(\R)$ outside a set of measure zero can be uniquely written as $g=u(t_1)^t a(e^{t_2}) u(t_3)$ and then we can write $\nu_{\delta}(f)=\int_{SL_2(\R)} f(g) H_{\delta}(g)\, dg$ for some function $H_{\delta}\in C_c^{\infty}(SL_2(\R))$ depending on $x_0$, and covering $SL_2(\R)$ by translations of a fundamental domain of $SL_2(\R)/\Gamma_0$ we have
\[
 \nu_{\delta}(f)=\langle f, h_{\delta} \rangle,
\]
for some $h_{\delta}\in C_c^{\infty}(SL_2(\R)/\Gamma_0)$ with $\|h_{\delta}\|_{S^1}\ll \delta^{-2}\|\psi\|_{S^1}$. Here the implicit constant depends both on $x_0$ and on the length of the smallest interval containing the support of $\psi$. Thus by Lemma \ref{mixing} we have
\[
 \nu_{\delta}(a(y)\cdot f)=\langle a(y)\cdot f, h_{\delta}\rangle \ll y^r \|f\|_{S^1} \delta^{-2} \|\psi\|_{S^1}
\]
so by choosing $\delta=y^{r/3}$ we get the result.

\end{proof}

\begin{lemma}[Uncorrelation of translates of horocycles and characters]
\label{uncorrelation_horocycles_characters}
For $f\in C^{\infty}(SL_2(\R)/\Gamma_0)$ {\color{black} with $\int f \, d\mu_{SL_2(\R)}=0$ and}  $\psi\in C_c^{\infty}(\R)$ we have
\[
 \int_{\R} f(a(y)u(t) x_0) \psi(t) e(ct)  \, dt \ll  y^{\frac{r^2}{6+12r}} \|\psi\|_{S^1} \|f\|_{S^1}.
\]
where the implicit constant depends on $x_0$ and the length of the smallest interval containing the support of $\psi$ {\color{black} (in particular, the result is uniform in $c$).}
\end{lemma}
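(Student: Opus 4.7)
The plan is to adapt the argument of Lemma~\ref{equidistribution_horocycles} to the oscillating factor $e(ct)$, splitting by the size of $|c|$.

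When $|c|$ is not too large, I would treat $\tilde\psi(t)=\psi(t)e(ct)$ as a single test function, noting $\|\tilde\psi\|_{S^1}\ll(1+|c|)\|\psi\|_{S^1}$. Decomposing $f=(f-\int f\,d\mu_{SL_2(\R)})+\int f\,d\mu_{SL_2(\R)}$, the constant part contributes $(\int f)\,\widehat{\psi}(-c)$, which is super-polynomially small in $|c|$ by smoothness of $\psi$ and so negligible once $|c|\ge 1$; applying Lemma~\ref{equidistribution_horocycles} to the mean-zero part then gives $|I(c)|\ll y^{r/3}(1+|c|)\|\psi\|_{S^1}\|f\|_{S^1}$, where I write $I(c)$ for the integral in the statement.

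For larger $|c|$ I would use a Weyl / van der Corput differencing, in the spirit of \cite[Lemma~9.4]{venkatesh_sparse}. The key algebraic identity is $a(y)u(t+h)=u(h/y)a(y)u(t)$, obtained by combining $a(y)u(s)=u(s/y)a(y)$ with the abelianity of $U^+$. Expanding $|I(c)|^{2}$ and changing variables $t\mapsto s+h$ in one factor yields
\[
|I(c)|^{2}=\int e(ch)\,J(h)\,dh,\qquad J(h)=\int F_h(a(y)u(s)x_0)\,\Psi_h(s)\,ds,
\]
with $F_h(x)=f(u(h/y)x)\,\overline{f(x)}$ and $\Psi_h(s)=\psi(s+h)\overline{\psi(s)}$. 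Applying Lemma~\ref{equidistribution_horocycles} to the inner integral (after subtracting the mean $\int F_h\,d\mu=\langle u(h/y)\!\cdot\! f,f\rangle=:m(h)$) decomposes $J(h)=m(h)A(h)+\mathrm{error}$, with $A(h)=\int\psi(s+h)\overline{\psi(s)}\,ds$ the autocorrelation of $\psi$ and the error of size $y^{r/3}\|F_h\|_{S^1}\|\Psi_h\|_{S^1}$. The oscillatory integral of the smooth main term is bounded by one integration by parts, using $|\partial_h m|\ll y^{-1}\|f\|_{S^1}^{2}$, producing a contribution of order $(|c|y)^{-1}\|f\|_{S^1}^{2}\|\psi\|_{S^1}^{2}$.

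The main difficulty is the error term: $\|F_h\|_{S^1}$ grows polynomially in $|h/y|$ through the adjoint action $\mathrm{Ad}(u(h/y))$ on $\mathfrak{g}$, so integrating naively over the natural range of $h$ produces a blow-up by a power of $1/y$. I would handle this by inserting a smooth cutoff restricting to $|h|\ll y$ — this amounts to replacing the direct squaring by the sliding-window identity $I(c)=(\int\chi_H)^{-1}\int\chi_H(h)\int f(a(y)u(t+h)x_0)\psi(t+h)e(c(t+h))\,dt\,dh$ followed by Cauchy–Schwarz — and, for finer control of the main term, by using the decay $|m(h)|\ll(y/|h|)^{r}\|f\|_{S^1}^{2}$ obtained by applying Lemma~\ref{mixing} to the KAK decomposition of $u(h/y)$. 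Balancing the small-$|c|$ bound $y^{r/3}(1+|c|)$ against the resulting large-$|c|$ bound from the differencing, and optimizing the threshold in $|c|$ between the two regimes, is what should yield the exponent $r^{2}/(6+12r)$ in the statement.
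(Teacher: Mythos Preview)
Your core ingredients are right and match the paper: the Weyl--van der Corput differencing via the identity $a(y)u(t+h)=u(h/y)a(y)u(t)$, followed by Lemma~\ref{equidistribution_horocycles} on the autocorrelation and Lemma~\ref{mixing} on the matrix coefficient $m(h)=\langle u(h/y)\cdot f,f\rangle$. But the organization differs from the paper in a way that matters.

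The paper does \emph{not} split by the size of $|c|$, and never integrates by parts against $e(ch)$. Instead it runs the sliding-window van der Corput at a \emph{free} scale $\delta$: after Cauchy--Schwarz one simply takes absolute values, obtaining
\[
|I|^2\ll \int_{-2}^{2}\Bigl|\int_{\R}\varphi(t+\delta s)\,\overline{\varphi(t)}\,dt\Bigr|\,ds,
\]
so the character $e(ct)$ has vanished entirely. The inner integral is then bounded by $[\,y^{r/3}(\delta s/y)+(y/\delta s)^{r}\,]\|\psi\|_{S^1}^2\|f\|_{S^1}^2$ (Lemma~\ref{equidistribution_horocycles} for the mean-zero part, Lemma~\ref{mixing} for $m(\delta s)$), one uses the trivial bound for $|s|<(y/\delta)^{r/(1+r)}$, and the exponent $r^2/(6+12r)$ comes from optimizing $\delta$ to balance $(y/\delta)^{r/(1+r)}$ against $y^{r/3}\delta/y$. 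No role for $c$ at any stage.

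Your route, as written, does not close. With window $H\asymp y$ the error term is indeed controlled, but then $|h/y|\ll 1$ and mixing gives no decay for $m(h)$; your integration-by-parts replacement yields only $(|c|y)^{-1}$ for the main term. Pairing this with your small-$|c|$ bound $|I|\ll y^{r/3}(1+|c|)$ leaves a gap: equating $y^{r/3}|c|$ with $(|c|y)^{-1/2}$ forces $|c|\sim y^{-1/3-2r/9}$, at which point both bounds are $\gg 1$. To salvage the argument you would have to let the window scale be a free parameter and use the mixing decay $(y/|h|)^r$ for the main term rather than integration by parts --- and once you do that, the $e(ch)$ factor and the small-$|c|$/large-$|c|$ dichotomy become irrelevant, and you are back to the paper's proof.
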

\begin{proof}
It is enough to prove the result for $\psi$ supported inside $(-1,1)$. Let $\varphi(t)=f(a(y)u(t)x_0)\psi(t)$. For any $v\in \R$ we can write the integral in the statement of the lemma as
\[
 I=\int_{\R} \varphi(t) e(ct) dt=\int_{\R} \varphi(t+v) e(ct) e(cv) dt
\]
so for $y<\delta<1$ and $\rho\in C_c^{\infty}(-1,1)$ with $\int \rho=1$ {\color{black} and $\|\rho\|_{L^{\infty}}\le 1$} we have
\[
 I= \int_{\R} \rho(s) \int_{\R} \varphi(t+\delta s) e(ct) e(c\delta s) dt   \, ds
\]
and then
\[
 |I|\le \int_{-2}^2  |\int_{\R} \varphi(t+\delta s) \rho(s) e(c\delta s) \, ds |  \, dt.
\]
By Cauchy's inequality
\[
 |I|^2\le 4 \int_{\R}  |\int_{\R} \varphi(t+\delta s) \rho(s) e(c\delta s) \, ds |^2 \, dt
\]
and by expanding the square and interchanging the integrals
\begin{align*}
 |I|^2  & \le 4 \int_{-1}^1 \int_{-1}^1   | \int_{\R}  \varphi(t+\delta s_1) \overline\varphi(t+\delta s_2)   \, dt   |  ds_1 ds_2, \\
 & \le 4  \int_{-2}^2   | \int_{\R}  \varphi(t+\delta s) \overline\varphi(t)   \, dt   |  ds.
\end{align*}
But
\[
  \int_{\R}  \varphi(t+w) \overline\varphi(t)   \, dt =\int_{\R} f(a(y) u(w) u(t) x_0) \overline f(a(y)u(t) x_0) \psi(t+w)\overline\psi(t) \,dt 
\]
and since $a(y)u(w)=u(w/y)a(y)$ we have
\[
  \int_{\R}  \varphi(t+w) \overline\varphi(t)   \, dt = \int_{\R} f_w (a(y)u(t) x_0) \psi_w(t) \, dt
\]
with $f_w(g\Gamma)=f(u(w/y)g\Gamma)\overline f(g\Gamma)$ and $\psi_w(t)=\psi(t+w)\psi(t)$. We can write
\[
 f_w(g\Gamma)= f_w^*(g\Gamma)+\int f_w(g\Gamma) d\mu_{SL_2(\R)}(g) = f_w^*(g\Gamma)+ \langle u(w/y)\cdot f, f\rangle
\]
with $\int f_w^* \, d\mu_{SL_2(\R)} =0$. Now, by applying Lemmas \ref{equidistribution_horocycles} and \ref{mixing}, and using that $\|f_w\|_{S^1}\ll \|u(\frac wy)\cdot f\|_{S^1}\|f\|_{S^1}\ll \frac wy\|f\|_{S^1}^2$ and $\|\psi_w\|_{S^1}\ll \|\psi\|_{S^1}^2$ \cite[Lemma 2.2]{venkatesh_sparse} we have
\[
 \int_{\R}  \varphi(t+w) \overline\varphi(t)   \, dt  \ll [ y^{r/3}(w/y)+ (y/w)^r] \|\psi\|_{S^1}^2 \|f\|_{S^1}^2.
\]
Using this bound for $|s|>(y/\delta)^{r/(1+r)}$  and the trivial bound otherwise we have
\[
 \int_{-2}^2   | \int_{\R}  \varphi(t+\delta s) \overline\varphi(t)   \, dt   |  ds\ll [(y/\delta)^{\frac{r}{1+r}} + y^{r/3} (\delta/y)] \|\psi\|_{S^1}^2 \|f\|_{S^1}^2
\]
so by picking $\delta$ such that both summands are equal we get the bound in the statement.
\end{proof}

In order to transfer the exponential mixing from $SL_2(\R)$ to $G$ in a simple way, we are going to deal just with functions in $G/\Gamma$ that can be written as products of functions in $SL_2(\R)/\Gamma_0$.

\begin{definition}[Factorizable function]
\label{factorizable}
We say that a function $f:G/\Gamma\to \C$ is \emph{factorizable} if it can be written as
\[
 f((g_1,\ldots, g_d)\Gamma)=f_1(g_1\Gamma)\ldots f_d(g_d\Gamma)
\]
with $f_j:SL_2(\R)/\Gamma_0\to \C$.
\end{definition}

\begin{lemma}[Equidistribution of translates of horospheres]
\label{equidistribution_horospheres}
Let  $f\in C_c^{\infty}(G/\Gamma)$ be a factorizable function with $k$ components having vanishing integral. Then, we have
\[
 \int_{\R^d} f(a_yu_tx_0) \psi(t) \, dt  \ll (y^{r/3})^k  \|\psi\|_{S^{4d}} \|f\|_{S^k}
\]
with the implicit constant depending on $x_0$ and the volume of the smallest ball containing the support of $\psi$.
\end{lemma}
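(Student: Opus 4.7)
The plan is to induct on the number $k$ of components of $f$ with vanishing integral, treating the dimension $d$ as a passive parameter that decreases alongside $k$. For the base case $k=0$, the bound is the trivial estimate $|I| \leq \|f\|_{L^\infty}\|\psi\|_{L^1} \ll \|f\|_{L^\infty} \|\psi\|_{S^0}$, with the implicit constant absorbing the volume of the support of $\psi$ and $(y^{r/3})^0 = 1$.

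For the inductive step ($k \geq 1$), I would exploit factorizability of $f$ and the coordinate-wise structure of the $G$-action to write
\[
f(a_y u_t x_0) = \prod_{j=1}^d f_j(a(y) u(t_j) x_0^{(j)}),
\]
with $x_0 = (x_0^{(1)}, \ldots, x_0^{(d)})$. Assume without loss of generality that $f_1$ has vanishing integral. By Fubini,
\[
I = \int_\R f_1(a(y) u(t_1) x_0^{(1)})\, \Psi(t_1)\, dt_1,
\]
\[
\Psi(t_1) := \int_{\R^{d-1}} \prod_{j=2}^d f_j(a(y) u(t_j) x_0^{(j)})\, \psi(t_1, t')\, dt',
\]
where $t' = (t_2, \ldots, t_d)$. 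Lemma~\ref{equidistribution_horocycles} applied to the outer $t_1$-integral gives $|I| \ll y^{r/3}\, \|\Psi\|_{S^1}\, \|f_1\|_{S^1}$, with the implicit constant controlled by $x_0^{(1)}$ and the length of the support of $\psi$ in the $t_1$-direction.

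It remains to control $\|\Psi\|_{S^1} = \sup_{t_1}(|\Psi(t_1)| + |\partial_{t_1} \Psi(t_1)|)$. For each fixed $t_1$, both $\Psi(t_1)$ and $\partial_{t_1} \Psi(t_1)$ are $(d-1)$-dimensional integrals of the factorizable function $f_2 \otimes \cdots \otimes f_d$ (having $k-1$ vanishing components) against the smooth compactly supported weights $\psi(t_1, \cdot)$ and $\partial_{t_1} \psi(t_1, \cdot)$ respectively. The inductive hypothesis in dimension $d-1$ with $k-1$ vanishing components thus yields
\[
|\Psi(t_1)| + |\partial_{t_1}\Psi(t_1)| \ll (y^{r/3})^{k-1}\, \|\psi\|_{S^{4d}}\, \|f_2 \otimes \cdots \otimes f_d\|_{S^{k-1}},
\]
the exponent $4d$ comfortably covering the single extra $t_1$-derivative since $4(d-1)+1 \leq 4d$. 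Combining with the Lemma~\ref{equidistribution_horocycles} bound and consolidating via the factorizability inequality $\|f_1\|_{S^1} \cdot \|f_2 \otimes \cdots \otimes f_d\|_{S^{k-1}} \ll \|f\|_{S^k}$ produces the desired bound $|I| \ll (y^{r/3})^k \|\psi\|_{S^{4d}} \|f\|_{S^k}$.

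The main obstacle is careful bookkeeping of the implicit constants through the induction: the constant in Lemma~\ref{equidistribution_horocycles} depends on the base point and on the support of the weight in the integrated variable, but these are inherited uniformly from the global $x_0$ and the projected support of $\psi$, so the constants at each inductive stage remain bounded in terms of the original data. The Sobolev index $4d$ is loose, chosen with room to spare since the recursion accumulates only $k \leq d$ additional derivatives on $\psi$; this margin also makes the slight norm bookkeeping (one $t_1$-derivative per level) harmless.
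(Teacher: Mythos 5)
Your proof is correct, but it takes a genuinely different route from the paper. The paper's argument is non-inductive: it multiplies $\psi$ by a fixed product cutoff $\rho(t)=\prod_j\rho_*(t_j)$ and then Fourier-expands, writing $\psi=\int\widehat\psi(\xi)\,\rho_\xi\,d\xi$ with $\rho_\xi(t)=\rho(t)e(\xi t)$. Because each $\rho_\xi$ is a product of one-variable functions and $f$ is factorizable, the $d$-dimensional integral against $\rho_\xi$ splits \emph{exactly} into a product of $d$ one-dimensional integrals, to which Lemma~\ref{equidistribution_horocycles} applies in each zero-mean coordinate; the gain $(y^{r/3})^k$ appears at once, at the price of a factor $(1+|\xi|)^k$ which is then absorbed by the rapid decay $\widehat\psi(\xi)\ll_j\|\psi\|_{S^j}(1+|\xi|)^{-j}$. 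Your approach instead peels off one zero-mean factor at a time via Fubini and applies Lemma~\ref{equidistribution_horocycles} to the outer one-dimensional integral, with the inner $(d-1)$-dimensional integral $\Psi(t_1)$ handled by the inductive hypothesis; this avoids the Fourier decomposition of $\psi$ entirely and only costs one extra $t_1$-derivative per level, which is why $4(d-1)+1\le 4d$ is the right bookkeeping. Both methods ultimately reduce to $k$ applications of the one-dimensional lemma; the paper's is a one-shot decomposition in frequency, yours is an iterative decomposition in physical space. One small thing worth making explicit in the inductive version is that the statement must be formulated (and proved) for all pairs $(d',k')$ with $d'\le d$, $k'\le k$, since each step decreases both simultaneously; the implicit constants then remain uniform because the support data are inherited from projections of the original $\mathrm{supp}\,\psi$, exactly as you observe.
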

\begin{proof}
 It is enough to prove it with the support of $\psi$ contained in the unit ball.  By choosing a fixed $\rho_*\in C_c^{\infty} ((-2,2))$ with $\rho_*=1$ in $(-1,1)$ and by setting $\rho(t)$ equal to $\rho_*(t_1)\ldots \rho_*(t_d)$
 we can write
 \begin{equation}\label{fourier_inv1}
  \psi(t)=\rho(t)\psi(t)=\int_{\R^d} \widehat\psi(\xi) \rho_{\xi}(t) \, d\xi   \qquad \rho_{\xi}(t)=\rho(t) e(\xi t)
 \end{equation}
By applying Lemma \ref{equidistribution_horocycles} on each factor with vanishing integral we have
\[
  \int_{\R^d} f(a_yu_tx_0) \rho_{\xi}(t) \, dt  \ll (y^{r/3})^k (1+|\xi|)^k \|f\|_{S^k}
\]
so the lemma follows from the bound $\widehat \psi(\xi)\ll_j \|\psi\|_{S^j} (1+|\xi|)^{-j}$.
\end{proof}

\begin{lemma}[Uncorrelation of translates of horospheres and characters]
\label{horospheres_un_characters}
Let $0<y<1$ and  $f\in C_c^{\infty}(G/\Gamma)$ be a factorizable function. Then for every $c=(c_1,\ldots,c_d)\in\R^d$ with $|c|\ll y^{-r^2/(24+48r)}$ we have
\[
  \int_{\R^d} f(a_yu_t x_0) \psi(t) e(c t)\, dt \ll  \tilde c^{-2d} \|\psi\|_{S^{4d}}\max_{{\color{black} 0\le l\le d}} [y^{\frac{r^2}{12+24r}l} \|f\|_{S^l}]
\]
with $\tilde c$ the geometric mean of $(1+|c_1|, \ldots, 1+|c_d|)$, with the implicit constant depending on $x_0$ and the measure of the smallest ball containing the support of $\psi$.
\end{lemma}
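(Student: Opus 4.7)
The plan is to follow the pattern of the proof of Lemma~\ref{equidistribution_horospheres}, but using Lemma~\ref{uncorrelation_horocycles_characters} on each one-dimensional factor in place of Lemma~\ref{equidistribution_horocycles}. The key observation is that the bound produced by Lemma~\ref{uncorrelation_horocycles_characters} does not depend on the character frequency, so the characters $e((\xi_j+c_j)t_j)$ that will appear cost nothing on each factor; the factor $\tilde c^{-2d}$ will instead be recovered at the very end from the hypothesis $|c|\ll y^{-r^2/(24+48r)}$.

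After reducing WLOG to the case where the support of $\psi$ lies in the unit cube, I would pick $\rho_*\in C_c^\infty((-2,2))$ with $\rho_*\equiv 1$ on $(-1,1)$ and set $\rho(t)=\prod_j\rho_*(t_j)$, so that $\rho\equiv 1$ on the support of $\psi$. By Fourier inversion, $\psi(t)=\int_{\R^d}\widehat\psi(\xi)\rho(t)e(\xi\cdot t)\,d\xi$, and combining with $e(ct)$ and using the factorization $f=f_1\otimes\cdots\otimes f_d$, the target integral becomes
\[
\int_{\R^d}\widehat\psi(\xi)\prod_{j=1}^d I_j(\xi_j+c_j)\,d\xi, \qquad I_j(\eta):=\int_\R f_j(a(y)u(t)x_0^{(j)})\rho_*(t)e(\eta t)\,dt.
\]

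I would then apply Lemma~\ref{uncorrelation_horocycles_characters} to each $I_j(\eta)$, obtaining $|I_j(\eta)|\ll y^{r^2/(6+12r)}\|f_j\|_{S^1}$ \emph{uniformly} in $\eta$. Multiplying over $j$ and using $\prod_j\|f_j\|_{S^1}\le\|f\|_{S^d}$ for factorizable $f$ (which follows from the definition of $\|\cdot\|_{S^d}$ evaluated on monomials $X_1\cdots X_d$ with $X_j$ in the basis of $\mathfrak g_j$), the product is bounded by $y^{dr^2/(6+12r)}\|f\|_{S^d}$, independent of $\xi$ and $c$. Integrating against $|\widehat\psi(\xi)|$ and using the standard Sobolev decay $|\widehat\psi(\xi)|\ll\|\psi\|_{S^{d+1}}(1+|\xi|)^{-d-1}$, so that $\int|\widehat\psi|\ll\|\psi\|_{S^{d+1}}\le\|\psi\|_{S^{4d}}$, the whole integral is bounded by $y^{dr^2/(6+12r)}\|f\|_{S^d}\|\psi\|_{S^{4d}}$.

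To finish, I would use the hypothesis $|c|\ll y^{-r^2/(24+48r)}$, which gives $\tilde c^{2d}\le(1+|c|)^{2d}\ll y^{-dr^2/(12+24r)}$, or equivalently $y^{dr^2/(12+24r)}\ll\tilde c^{-2d}$. Then
\[
y^{dr^2/(6+12r)}=y^{dr^2/(12+24r)}\cdot y^{dr^2/(12+24r)}\ll\tilde c^{-2d}\,y^{dr^2/(12+24r)}\le\tilde c^{-2d}\max_{l\le d}[y^{r^2l/(12+24r)}\|f\|_{S^l}]
\]
via the $l=d$ term in the maximum, giving the statement. The whole argument is a direct adaptation of the proof of Lemma~\ref{equidistribution_horospheres} once one notices the $c$-independence of Lemma~\ref{uncorrelation_horocycles_characters}; the only mild obstacles I anticipate are the bookkeeping of implicit constants depending on $x_0$, $\rho_*$ and the support of $\psi$, and the factorized Sobolev inequality, both of which are routine.
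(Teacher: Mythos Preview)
Your argument is correct, and it is genuinely simpler than the paper's route.

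The paper does not apply Lemma~\ref{uncorrelation_horocycles_characters} uniformly to all $d$ factors. Instead, after the same Fourier inversion, it keeps $e(\xi_j t_j)$ inside the weight $\rho_{\xi,j}(t_j)=\rho_*(t_j)e(\xi_j t_j)$ (paying a factor $(1+|\xi|)$ in the $S^1$ norm) and applies Lemma~\ref{uncorrelation_horocycles_characters} with character frequency $c_j$ only. Each $f_j$ is split as a constant plus a zero-mean piece; the constant piece contributes $|\widehat{\rho_{\xi,j}}(c_j)|\ll(1+|\xi|^2)/(1+|c_j|)^2$, and expanding the resulting product $\prod_j(\text{constant part}+\text{dynamical part})$ produces all the terms $l=0,\dots,d$ in the $\max$, with the factors $(1+|c_j|)^{-2}$ assembling structurally into $\tilde c^{-2d}$. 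The hypothesis $|c|\ll y^{-r^2/(24+48r)}$ is then used only to absorb a residual $(1+|c|^2)^l$.

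You instead push both $\xi_j$ and $c_j$ into the character, so the weight stays the fixed bump $\rho_*$ and there is no $(1+|\xi|)$ loss; Lemma~\ref{uncorrelation_horocycles_characters} applied to every factor gives $y^{dr^2/(6+12r)}\|f\|_{S^d}$ directly, and you recover $\tilde c^{-2d}$ at the end by spending half of this $y$-power through the hypothesis on $|c|$. This yields only the $l=d$ term of the maximum, which is of course $\le$ the stated bound. The paper's version is sharper when $\|f\|_{S^l}$ happens to be small for some $l<d$, but that extra refinement is not used downstream (only a single exponent $\alpha$ survives into Proposition~\ref{from_group_to_real}), so your shortcut loses nothing for the purposes of the paper.
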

\begin{proof}
We begin as in the proof of Lemma \ref{equidistribution_horospheres}, by writing $\psi$ in terms of $\rho_{\xi}$, using (\ref{fourier_inv1}). Since $f$ is factorizable we have
\[
  \int_{\R^d} f(a_y u_t x_0) \rho_{\xi}(t) e(c  t)\, dt  =\prod_{j\le d} \int_{\R} f_j(a(y)u(t) x_{0,j}) \rho_{\xi, j}(t_j) e(c_jt_j) \, dt_j,
\]
so writing $f_j$ as a constant plus a function having  vanishing integral and applying Lemma \ref{uncorrelation_horocycles_characters} we have
\[
  \int_{\R^d} f(a_yu_t x_0) \rho_{\xi}(t) e(c t)\, dt \ll \prod_{j\le d} ( |\widehat \rho_{\xi,j}(c_j)| \|f_j\|_{S^0} + y^{\frac{r^2}{6+12r}}(1+ |\xi|) \|f_j\|_{S^1} ).
\]
Now we use the bound $|\widehat \rho_{\xi,j}(c_j)|\ll (1+|\xi|^2)/(1+ |c_j|^{2})$ and expand the product; in the resulting sum, each term with $l$ factors of the shape $y^{r^2/(6+12r)}(1+ |\xi|) \|f_j\|_{S^1}$ is bounded by
\[
 \tilde c^{-2d} (1+|\xi|)^{2d-l} [y^{\frac{r^2}{6+12r}} (1+|c|^2) ]^l \|f\|_{S^l},
\]
where we have used that the product of $d-l$ factors $(1+|c_j|)^{-2}$ is bounded by $(1+|c|)^{2l}/|\tilde c|^{2d}$. We finish by using the bound for $|c|$ and $\widehat\psi(\xi)\ll_j \|\psi\|_{S^j} (1+|\xi|)^{-j}$.
\end{proof}

\section{From $G/\Gamma$ to $\mathbb R^d$}

Our aim in this section is to transform the problem of equidistribution of translates of $S$ in $G/\Gamma$ to a related problem in $\R^d$.  For that we shall use the mixing results from the previous section; but first we need to show that for equidistribution it is enough to handle factorizable functions. This is proven in the next two lemmas.

\begin{lemma}[Reduction to bounded support]
\label{reduction_to_bounded_support}
Let $0<\delta<1$ and $\nu$ be a Borel probability measure on $G/\Gamma$. {\color{black} There exists a constant $C_{\Gamma}>0$ such that} if 
\begin{equation}\label{factorization_1}
| \int_{G/\Gamma} f\, d\nu -\int_{G/\Gamma} f \, d\mu_G| {\color{black} > } \delta \|f\|_{S^d}
\end{equation}
{\color{black}then the same  inequality is true replacing $\delta$ by $C_{\Gamma} \delta$ } and $f$ by either a factorizable function or a function supported on $B_R=B_{0,R}^d$ for some {\color{black}$R\ll_{\Gamma}\delta^{-1}$}, with $B_{0,R}$ the subset of elements $g\Gamma_0$ in $SL_2(\R)/\Gamma_0$ satisfying $\|g\Gamma_0\|^2\le R$, with $\|g\Gamma_0\|=\min_{\gamma_0\in\Gamma_0}\|g\gamma_0\|$, $\|\cdot\|$ the Frobenius matrix norm.
\end{lemma}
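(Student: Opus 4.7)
The plan is to run a dichotomy based on how much mass $\nu$ puts near the cusp. First I would pick $R\asymp\delta^{-1}$ large enough that $\mu_G(B_R^c)\le c\delta$ for a suitably small constant $c$; this is possible since the standard cusp estimate gives $\mu_{SL_2(\R)}(B_{0,R}^c)\ll 1/R$, and hence $\mu_G(B_R^c)\le d\,\mu_{SL_2(\R)}(B_{0,R}^c)\ll d/R$ by a union bound over the factors. Next I would build a smooth factorizable cutoff
\[
\chi_R(g_1\Gamma_0,\ldots,g_d\Gamma_0)=\prod_{j=1}^{d}\tilde\chi(g_j\Gamma_0),
\]
with $\tilde\chi$ supported on $B_{0,R}$, equal to $1$ on $B_{0,R/2}$, and having a controlled Sobolev norm (the only delicate point; see below).

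The dichotomy is driven by the value of $\int\chi_R\,d\nu$. By the choice of $R$, $\int\chi_R\,d\mu_G=1-O(\delta)$. If $\bigl|\int\chi_R\,d\nu-\int\chi_R\,d\mu_G\bigr|\gg\delta$, then $\chi_R$ itself is a factorizable function for which an inequality of the same shape as \eqref{factorization_1} holds. Otherwise, $\int\chi_R\,d\nu=1-O(\delta)$ as well, which forces $\nu(B_R^c)\ll\delta$. Splitting $f=f\chi_R+f(1-\chi_R)$, the tail is bounded by
\[
\Bigl|\int f(1-\chi_R)(d\nu-d\mu_G)\Bigr|\le\|f\|_{\infty}\bigl(\nu(B_{R/2}^c)+\mu_G(B_{R/2}^c)\bigr)\ll\delta\|f\|_{S^d},
\]
since $\|f\|_{\infty}\le\|f\|_{S^d}$. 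The triangle inequality then gives $\bigl|\int f\chi_R(d\nu-d\mu_G)\bigr|\gg\delta\|f\|_{S^d}$, and using the Leibniz-type bound $\|f\chi_R\|_{S^d}\ll\|f\|_{S^d}\|\chi_R\|_{S^d}$ the function $f\chi_R$ (which is supported on $B_R$) witnesses an inequality of the same shape, with a $\delta$-loss by a factor of $\|\chi_R\|_{S^d}$.

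The one real difficulty is keeping $\|\chi_R\|_{S^d}$ under control. In Iwasawa coordinates $g=n(x)a(y)k$ on $SL_2(\R)$ the contracting-horocyclic generator $Y$ acts on the height $h=y$ by $Yh=-2xy$, so iterating $Y$ on a naive cutoff $\phi(y/R)$ introduces positive powers of $y\sim R$ on the transition region, producing a polynomial blow-up in $R\sim\delta^{-1}$. This is compatible with the statement (the conclusion allows "some $R\ll\delta^{-1}$" and the loss is polynomial in $\delta$), but a cleaner bound is obtained by working in a fixed Siegel set and choosing $\tilde\chi$ as a function of variables on which the relevant generators act tamely (e.g.\ of $\log y$ together with $x$ reduced modulo the fundamental domain), which yields $\|\chi_R\|_{S^d}=O(1)$ uniformly in $R$. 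Once this uniform cutoff bound is in hand, both branches of the dichotomy deliver the desired inequality with no loss in $\delta$.
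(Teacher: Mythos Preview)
Your dichotomy argument is correct and is close in spirit to the paper's approach, but the one point you flag as ``the only delicate point'' is indeed the whole game, and your resolution of it is not complete. The suggestion to take $\tilde\chi$ as a function of $\log y$ (or otherwise ``variables on which the generators act tamely'') is not justified: for right-invariant vector fields on $SL_2(\R)$ there is no coordinate on the cusp on which \emph{all} basis elements act with bounded coefficients, so a direct construction of this kind does not obviously give $\|\tilde\chi\|_{S^d}=O(1)$. And your fallback (accepting polynomial blow-up in $R\sim\delta^{-1}$) would prove a weaker statement than the lemma as written, since the conclusion requires the \emph{same} $\delta$ on the right-hand side.

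The paper sidesteps the issue entirely by defining the cutoff via group convolution:
\[
\psi_{0,R}(g\Gamma_0)=\int_{SL_2(\R)}\psi(h)\,1_{B_{0,R}}(h^{-1}g\Gamma_0)\,dh
=\int_{SL_2(\R)}\psi(gh^{-1})\,1_{B_{0,R}}(h\Gamma_0)\,dh,
\]
with $\psi\in C_c^\infty(SL_2(\R))$ fixed. Since the Sobolev derivatives $Xf(g)=\frac{d}{dt}f(e^{tX}g)|_{t=0}$ act only on the $g$-variable of $\psi(gh^{-1})$, they produce $(X\psi)(gh^{-1})$ with $\psi$ fixed, so $\|\psi_{0,R}\|_{S^d}\ll 1$ uniformly in $R$ for free. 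Multiplicativity of the Frobenius norm gives the correct support properties for $\psi_{0,R}$ and $1-\psi_{0,R}$. Once this is in hand, your dichotomy goes through verbatim (the paper organizes the same idea slightly differently, expanding $1=\psi_R+\sum_i\epsilon_i\varrho_i$ into $2^d$ factorizable pieces and treating each, but the content is the same). So the fix is: replace your ad hoc $\tilde\chi$ by the convolution $\psi_{0,R}$ above, and the rest of your argument stands.
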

\begin{proof}
Pick $\psi\in C_c^{\infty}(SL_2(\R))$ non-negative with $\int_{SL_2(\R)} \psi(h)\, dh=1$, $dh$ a Haar measure on $SL_2(\R)$, and consider the convolution
\[
 \psi_{0,R}(g\Gamma_0)=\int_{SL_2(\R)} \psi(h) 1_{B_{0,R}}(h^{-1}g\Gamma_0)  \, dh.
\]
By the multiplicativity of $\|\cdot\|$ one can show that the supports of $\psi_{0,R}$ and $1-\psi_{0,R}$ are contained in $B_{0,c_1R}$ and $B_{0,c_2 R}^c$ respectively, with $c_1,c_2>0$ two absolute constants.
On the other hand, since 
\[
\psi_{0,R}(g\Gamma_0)=\int_{SL_2(\R)} \psi(gh^{-1})1_{B_{0,R}}(h\Gamma_0)\, dh
\]
we deduce that $\|\psi_{0,R}\|_{S^d}\ll 1$. Then, since  $\mu_{SL_2(\R)}(B_{0,R}^c)\ll R^{-1}$ \cite{iwaniec_book}, we can write
$
 f=f\psi_R+\sum_{i} f \epsilon_i\varrho_i
$
with $i\le 2^d-1$,  $\psi_{R}(g\Gamma)=\prod_{j\le d}\psi_{0,R}(g_j\Gamma_0)$, $\epsilon_i=\pm 1$ and $\varrho_i$ non-negative factorizable functions with $\|\varrho_i\|_{S^d}\ll 1$ and supported on a set of $\mu_G$ measure $O(R^{-1})$. By submultiplicativity of the $S^d$ norm and linearity we can {\color{black}replace} $f$ by either $f\psi_{R}$ or $f\varrho_i$ (for some $i$) in (\ref{factorization_1}). In the former case we are done because $f\psi_{R}$ is supported on $B_{R}$; in the latter, if $R\delta$ is larger than an absolute constant, taking into account the measure of the support of $\varrho_i$ and its non-negativity we can {\color{black}replace} $f\varrho_i$ by the factorizable function $\|f\|_{S^d} \varrho_i$ in (\ref{factorization_1}).
\end{proof}

\begin{lemma}[Reduction to factorizable functions]
\label{reduction_to_factorizable}
If we have (\ref{factorization_1}) then there exists a factorizable $f^*$ such that
\begin{equation}\label{factorization_2}
 | \int_{G/\Gamma} f^*\, d\nu - \int_{G/\Gamma} f^* \, d\mu_G | \gg \delta^{4d+1} \|f^*\|_{S^d}.
\end{equation}
\end{lemma}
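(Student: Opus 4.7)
The plan is to invoke the previous lemma to reduce to the case where $f$ is supported on $B_R$, then decompose $f$ as a bounded-overlap sum of factorizable pieces via a product partition of unity, and finally pigeonhole on the pieces.

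By Lemma \ref{reduction_to_bounded_support}, we may assume either that $f$ is already factorizable (in which case $f^*=f$ works) or that $f$ is supported on $B_R=B_{0,R}^d$ with $R\ll\delta^{-1}$. In the latter case, I would fix a scale $\eta$ equal to a small constant multiple of $\delta$ and construct a smooth partition of unity $\{\chi_i\}_{i\in I}$ on $SL_2(\R)/\Gamma_0$ with $\sum_i\chi_i\equiv 1$ on $B_{0,R}$, each $\chi_i$ supported in a ball of radius $\eta$ in a right-invariant Riemannian metric, bounded overlap $O_d(1)$, Sobolev bound $\|\chi_i\|_{S^d}\ll\eta^{-d}$, and cardinality $|I|\ll\eta^{-3}$. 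The cardinality bound uses that $\mu_{SL_2(\R)}(B_{0,R})\le 1$ uniformly in $R$, so a maximal $\eta$-separated net in $B_{0,R}$ has $\ll\eta^{-3}$ points. Taking tensor products yields a partition of unity $\chi_{\vec i}(g\Gamma)=\prod_j\chi_{i_j}(g_j\Gamma_0)$ on $B_R$ indexed by $\vec i\in I^d$, with $|I^d|\ll\eta^{-3d}$, bounded overlap $O_d(1)$, and $\|\chi_{\vec i}\|_{S^d}\ll\eta^{-d}$ (since any monomial in $\mathfrak g$ of total degree $\le d$ factors as $\prod_j\partial^{\alpha_j}\chi_{i_j}$ with $\sum_j|\alpha_j|\le d$, each contributing a factor $\eta^{-|\alpha_j|}$).

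Next, for each $\vec i$ I would pick $p_{\vec i}\in\supp\chi_{\vec i}$ and set $\tilde f=\sum_{\vec i}f(p_{\vec i})\chi_{\vec i}$, a finite sum of factorizable functions. The mean value inequality on $G/\Gamma$ gives $|f(x)-f(p_{\vec i})|\ll\eta\|f\|_{S^1}$ for $x\in\supp\chi_{\vec i}$, and combining with bounded overlap yields $\|f-\tilde f\|_\infty\ll\eta\|f\|_{S^d}$. Since $\nu$ and $\mu_G$ are probability measures, choosing $\eta$ as a sufficiently small multiple of $\delta$ gives
\[
 \Big|\int_{G/\Gamma}\tilde f\,d(\nu-\mu_G)\Big|\ge\tfrac{\delta}{2}\|f\|_{S^d}.
\]
Pigeonholing among the $\ll\eta^{-3d}$ terms produces some $\vec i$ for which the single factorizable summand $f^*=f(p_{\vec i})\chi_{\vec i}$ satisfies $|\int f^*\,d(\nu-\mu_G)|\gg\delta\,\eta^{3d}\,\|f\|_{S^d}$.

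Finally, using $\|f\|_\infty\le\|f\|_{S^d}$, we have $\|f^*\|_{S^d}\le|f(p_{\vec i})|\cdot\|\chi_{\vec i}\|_{S^d}\ll\|f\|_{S^d}\,\eta^{-d}$, so
\[
 \frac{|\int f^*\,d(\nu-\mu_G)|}{\|f^*\|_{S^d}}\gg\delta\,\eta^{3d}\cdot\eta^{d}=\delta\,\eta^{4d}\gg\delta^{4d+1},
\]
which is exactly (\ref{factorization_2}). I expect the only real obstacle to be the construction of the partition of unity with the sharp bounds $|I|\ll\eta^{-3}$ and $\|\chi_i\|_{S^d}\ll\eta^{-d}$; this requires care about the cusp geometry of $B_{0,R}$, but the uniform bound on its hyperbolic volume makes the $\eta$-covering count independent of $R$. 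Everything else is bookkeeping of Sobolev norms and pigeonhole.
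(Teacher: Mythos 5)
Your proof is correct and takes a genuinely different route from the paper's. The paper proves this lemma by an averaging trick: it fixes a factorizable $L^1$-normalized bump $\eta_\beta$ on $G$ at scale $\beta\asymp\delta$, projects its translates to functions $\rho_{\beta,\Gamma h}$ on $G/\Gamma$, uses the identity $\int_{\Gamma\backslash G}I(\eta_\beta^g)\,d\mu^*=1$ together with Fubini and H\"older to locate a specific $\Gamma h$ (with $\|h^{-1}\Gamma\|^2\ll\delta^{-1}$) for which $|\int\rho_{\beta,\Gamma h}\,d(\nu-\mu_G)|\gg\delta$, and then verifies $\|\rho_{\beta,\Gamma h}\|_{S^d}\ll\delta^{-4d}$ by showing only one term of the Poincar\'e series survives — a discreteness argument. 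Your version replaces the continuous average over translates by a discrete partition of unity at scale $\eta\asymp\delta$ and a pigeonhole over the $\ll\eta^{-3d}$ pieces; the two routes produce the same exponent $\delta^{4d+1}$ because both pay $\eta^{-3d}\asymp\delta^{-3d}$ for localization (counting vs. $L^1$-normalization) and $\eta^{-d}\asymp\delta^{-d}$ for the degree-$d$ Sobolev norm of the bump, and both normalize $\|f\|_{S^d}$ out early. Your version is arguably more elementary since it avoids the unfolding/projection $I$ and the left-invariant measure $\mu^*$ on $\Gamma\backslash G$.

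One caveat on the step you yourself flag: the claim that ``the uniform bound on its hyperbolic volume makes the $\eta$-covering count independent of $R$'' is not by itself sufficient. Disjoint balls of radius $\eta/2$ around a maximal $\eta$-separated net give $|I|\ll\eta^{-3}$ only if each such ball has volume $\gg\eta^3$, and deep in a cusp the injectivity radius of $SL_2(\R)/\Gamma_0$ shrinks, so quotient balls can have volume $\ll\eta^3$. What actually makes the count come out right is that the injectivity radius on $B_{0,R}$ is $\gg R^{-1}$, and the relation $\eta\asymp\delta$, $R\ll\delta^{-1}$ lets you choose constants so that $\eta\lesssim R^{-1}$; then the balls are embedded and the volume argument applies. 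This is precisely the point the paper handles at the end of its proof (``there is just one non-vanishing term''): $d(h^{-1}\gamma h,1)\ll\beta$ with $\|h\|^2\ll\delta^{-1}$ forces $\gamma=1$ once $\beta/\delta$ is a small enough constant. The same calculation underwrites your covering count, so it should be made explicit rather than attributed to the volume bound alone. With that repaired, your argument is a valid alternative proof.
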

\begin{proof}
By Lemma \ref{reduction_to_bounded_support} we can assume that the $f$ in (\ref{factorization_1}) is supported in $B_R$ for some $R\ll \delta^{-1}$. Now, for every $0<\beta<1$ consider the function $\eta_{0,\beta}\in C_c^{\infty}(SL_2(\R))$ defined as
$
 \eta_{0,\beta}(h)=c_{\beta}\phi(d_0(h,I)/\beta) 
$
with $\phi\in C_c^{\infty}(1,2)$, $d_0$ a fixed right $SL_2(\R)$ invariant Riemannian distance on $SL_2(\R)$ and $c_{\beta}$ a constant such that 
$\int_{SL_2(\R)} \eta_{0,\beta}(h) \, dh=1$. Thus $c_{\beta}\asymp \beta^{-3}$ and  $\eta_{0,\beta}$ satisfies $\|\eta_{0,\beta}\|_{S^j}\ll \beta^{-3-j}$. 
Let us define the factorizable function $\eta_{\beta}\in C_c^{\infty}(G)$ defined as $\eta_{\beta}(g)=\prod_{j\le d} \eta_{0,\beta}(g_j)$. We have $\|\eta_{\beta}\|_{S^j}\ll \beta^{-3d-j}$ and it is supported on a ball of radius $O(\beta)$ around the identity in $G$, with the metric $d$ induced from $d_0$. 
For any $g\in G$, let us consider the function $\eta_{\beta}^g(h)=\eta_{\beta}(gh)$. By invariance of the Haar measure we have $\int_G \eta_{\beta}^g(h)\, dh=1$. 

Consider the map $I: C_c(G)\to C_c(\Gamma\setminus G)$ defined as $I(w)(\Gamma h)=\sum_{\gamma\in \Gamma} w(\gamma h)$ and the left $G$ invariant measure $\mu^*$ on $\Gamma\setminus G$ defined by $\int_{\Gamma\setminus G} I(w) \, d\mu^* =\int_G w(h)\, dh$ (see \cite{raghunathan}). Applying this to $\eta_{\beta}^g$ we have
\begin{equation}\label{factorization_3}
 \int_{\Gamma\setminus G} I(\eta_{\beta}^g)(\Gamma h) \, d\mu^*(\Gamma h) =1.
\end{equation}
One can see that $I(\eta_{\beta}^g)(\Gamma h)$ is invariant under the change $g\mapsto g\gamma$, $\gamma\in \Gamma$, and then we can write $I(\eta_{\beta}^g)(\Gamma h)=\rho_{\beta,\Gamma h}(g\Gamma)$. Then,
using (\ref{factorization_3}) into (\ref{factorization_1}) and Fubini we have 
\begin{equation}\label{factorization_4}
 \int_{\Gamma\setminus G}|\int_{G/\Gamma} f\rho_{\beta,\Gamma h} \, d\nu -\int_{G/\Gamma} f\rho_{\beta,\Gamma h} \, d\mu_G|   \, d\mu^*(\Gamma h) \gg \delta \|f\|_{S^d}.
\end{equation}
Now, if $g\Gamma$ is in the support of $\rho_{\beta,\Gamma h}$ we have $d(g\gamma h, I)\ll \beta$ for some $\gamma\in\Gamma$  which by the mean value theorem (see \cite{venkatesh_sparse}, before Lemma 2.2) $|f(g\Gamma)-f(h^{-1}\Gamma)|\ll \|f\|_{S^1} \beta$. Hence, if $\beta/\delta$ is smaller than an absolute positive constant, from (\ref{factorization_3}), (\ref{factorization_4}) and H\"older inequality we deduce that 
\[
 |\int_{G/\Gamma} \rho_{\beta,\Gamma h} \, d\nu - \int_{G/\Gamma} \rho_{\beta,\Gamma h}  \, d\mu_G |\gg \delta
\]
for some $\Gamma h$ with $\|h^{-1}\Gamma\|^2\ll \delta^{-1}$, taking into account the support of $f$ and the fact that if $g\Gamma$ is in the support of $\rho_{\beta,\Gamma h}$ we have $d(g,h^{-1}\gamma^{-1})\ll 1$ for some $\gamma^{-1}\in \Gamma$. One can check that $\rho_{\beta,\Gamma h}$ is factorizable, so that (\ref{factorization_2}) will follow if we show that $\|\rho_{\beta,\Gamma h}\|_{S^j}\ll \| \eta_{\beta}\|_{S^j}\ll \beta^{-3d-j}$ with $\beta^{-1}\ll \delta^{-1}$. But this in turn follows from showing that in the sum defining $\rho_{\beta,\Gamma h}$ there is just one non-vanishing term. Suppose there were at least two non-vanishing terms. Then $d(g\gamma_1 h, I)\ll \beta$ and $d(g\gamma_2 h,I)\ll \beta$ for $\gamma_1\neq\gamma_2\in\Gamma$, $\|h\|^2\ll \delta^{-1}$. By the right invariance of the metric this implies $d(h^{-1}\gamma h,1)\ll \beta$ for $\gamma=\gamma_2^{-1}\gamma_1\neq 1$, hence $h^{-1}\gamma h=\exp(X)$ with $X\in\mathfrak g$, $\|X\|\ll \beta$. Therefore $\gamma=\exp(hXh^{-1})$ and $\|hXh^{-1}\|\ll \|h\| \|X\| \|h^{-1}\|\ll \beta/\delta $ since $\|h^{-1}\|\ll \|h\|$. Since $\Gamma$ is discrete, this gives a contradiction for $\beta/\delta$ small enough.
\end{proof}

We are going to write our problem in $G/\Gamma$ as a problem in $\R^d$, in the spirit of \cite{jones}, in order to use Fourier Analysis in $\R^d$ afterwards. By fixing a function $\rho\in C_c^{\infty}(\R^d)$ such that $\rho(x)=1$ whenever $u_x$ is in the support of $\lambda_S$, we have
\[
 \int_{U^+} f(a_yux_0) d\lambda_S (u)= \int_{\R^d} F(x) \, d\lambda_S(x)
\]
with $F(x)=\rho(x) f(a_yu_x x_0)$. We are going to denote this functional as
\[
\lambda_S(F)=\int_{\R^d} F(x) \, d\lambda_S(x);
\]
we will need to study its action on functions $F$ satisfying some special properties.

\begin{definition}[Singular function]
 We shall say that $f:\R^d\to \C$ is a $(T,\alpha)$-\emph{singular function} if $\|f\|_{S^0}\le 1$, $\|f\|_{S^1}\le T$, with
  \[
 | \int_{\R^d} f(x) \psi(x) \, dx |\le T^{-\alpha} \|\psi\|_{S^{4d}}
 \]
for any $\psi\in C^{\infty}(\R^d)$  and there exist $k\le d/2$ and $\tilde f:\R^k\to \C$ such that $f(x_1,x_2)=\tilde f(x_1) \tilde \rho(x_2)$ for every $(x_1,x_2)\in \R^k\times \R^{d-k}$, with $\tilde\rho\in C_c^{\infty}(\R^{d-k})$, $\|\tilde\rho\|_{S^1}\le 1\le \int_{\R^{d-k}}\tilde\rho$.
\end{definition}

\begin{definition}[Mixing function]
 We shall say that $f:\R^d\to \C$ is a $(T,\alpha)$-\emph{mixing function} if $\|f\|_{S^0}\le 1$, $\|f\|_{S^1}\le T$, $f$ supported in a translation of the unit ball, with
 \begin{equation}\label{horosphere_condition}
\int_{|v|<V} |\int_{\R^d} f(x+\frac vT) \overline f(x)\psi(x) \, dx | 
\, dv \le V^{d(1-\alpha)} \|\psi\|_{S^{4d}}
 \end{equation}
for every $V<T^{\alpha}$ and
\begin{equation}\label{character_condition}
 \int_{|h|<T^{\alpha}} \sup_{|v|<T^{\alpha}}|\int_{\R^d} f(x+\frac{v}{T}) \overline f(x) \psi(x)e(hx) \, dx | \, dh \le \|\psi\|_{S^{4d}}
\end{equation}
for any $\psi\in C^{\infty}(\R^d)$.
\end{definition}

Now, the main result of this section is
\begin{proposition}[From $G/\Gamma$ to $\R^d$]
\label{from_group_to_real}
 Let $y,\delta\in (0,1/2)$ and $f_*\in C^{\infty}(G/\Gamma)$ with
 \[
  |\int_{U^+} f_*(a_yux_0)\, d\lambda_S(u)-\int_{G/\Gamma} f_* \, d\mu_G|>\delta \|f_*\|_{S^d}.
 \]
 Then we have
 \[
  |\lambda_S(f)|\gg\delta^{4d+1}
 \]
for some function $f$  either $(y^{-1},\alpha)$-singular or $(y^{-1},\alpha)$-mixing for {\color{black}$\alpha=r^2/48$}.
\end{proposition}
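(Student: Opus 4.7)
The plan is: apply Lemma~\ref{reduction_to_factorizable} to reduce to a factorizable test function $f^*=f_1\otimes\cdots\otimes f_d$ with $f_j\in C^\infty(SL_2(\R)/\Gamma_0)$, split each $f_j=c_j+f_j^0$ into its mean $c_j=\int f_j\,d\mu_{SL_2(\R)}$ plus a mean-zero remainder, and expand. The hypothesis becomes
\[
\int f^*\,d\nu-\int f^*\,d\mu_G=\sum_{\emptyset\neq A\subseteq[d]}\Bigl(\prod_{j\notin A}c_j\Bigr)\int_{\R^d}\prod_{j\in A}f_j^0(a(y)u(x_j)x_{0,j})\,d\lambda_S(x),
\]
where $\nu$ is the pushforward of $\lambda_S$ by $u\mapsto a_yux_0$; pigeonholing produces a nonempty $A$ whose summand has modulus $\gg_d\delta^{4d+1}$. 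The candidate $f$ on $\R^d$ will be, up to normalization, the product $\prod_{j\in A}f_j^0(a(y)u(x_j)x_{0,j})$ times a bump $\tilde\rho(x_{A^c})\in C_c^\infty(\R^{|A^c|})$ chosen to equal $1$ on the projection of $\mathrm{supp}\,\lambda_S$, with $\|\tilde\rho\|_{S^1}\le 1\le \int\tilde\rho$; whether $f$ ends up singular or mixing is governed by the size of $A$.

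Using the conjugation identity $a(y)u(t)=u(t/y)a(y)$, each $\partial_{x_j}$ with $j\in A$ introduces a factor $y^{-1}$, so after absorbing constants into $C$ the function $f(x)=C\,\tilde\rho(x_{A^c})\prod_{j\in A}f_j^0(a(y)u(x_j)x_{0,j})$ satisfies $\|f\|_{S^0}\le 1$, $\|f\|_{S^1}\le T:=y^{-1}$, and $|\lambda_S(f)|\gg\delta^{4d+1}$. If $|A|\le d/2$, then $f(x)=\tilde f(x_A)\tilde\rho(x_{A^c})$ matches the singular template with $k=|A|$; for the decay requirement, recognize $\int f\psi\,dx$ as the integral along the horosphere of a factorizable function on $G/\Gamma$ (whose $j$-th component is $f_j^0$ for $j\in A$ and $\mathbf{1}$ for $j\notin A$) and apply Lemma~\ref{equidistribution_horospheres} with $k=|A|\ge 1$ vanishing-mean components to obtain $|\int f\psi\,dx|\ll y^{r/3}\|\psi\|_{S^{4d}}$, certifying $f$ is $(y^{-1},r/3)$-singular.

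If instead $|A|>d/2$, $f$ does not fit the singular factorization and I would verify the mixing conditions. Using $a(y)u(x_j+v_jy)=u(v_j)a(y)u(x_j)$, expand
\[
f(x+v/T)\overline{f(x)}=|C|^2\,\tilde\rho(x_{A^c}+v_{A^c}y)\overline{\tilde\rho(x_{A^c})}\prod_{j\in A}\bigl[(u(v_j)\cdot f_j^0)\overline{f_j^0}\bigr](a(y)u(x_j)x_{0,j}).
\]
Split each $SL_2(\R)$-factor as its mean $c_j(v_j)=\langle u(v_j)\cdot f_j^0,f_j^0\rangle$ plus a mean-zero remainder; Lemma~\ref{mixing} gives $|c_j(v_j)|\ll(1+|v_j|)^{-r}$. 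Expanding over $B\subseteq A$, the $B=\emptyset$ term (pure product of means) drives the bound, while terms with $|B|\ge 1$ zero-mean factors inherit extra decay $(y^{r/3})^{|B|}$ by Lemma~\ref{equidistribution_horospheres}. Integrating over $|v|<V<T^\alpha$ then yields condition~(i), and the parallel argument with an extra character $e(hx)$ together with Lemma~\ref{horospheres_un_characters} (whose output carries the $\tilde c^{-2d}$ factor integrable in $h$) gives condition~(ii).

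The main obstacle is the leading term of the mixing case: pointwise one only gets $\prod_{j\in A}(1+|v_j|)^{-r}$, which integrates to roughly $V^{d-r|A|}$ over $|v|<V$ in $\R^d$, and forcing $V^{d-r|A|}\le V^{d/2}$ requires $|A|$ to be sufficiently large relative to the spectral gap $r$. Thus the crude split at $|A|=d/2$ may need refinement, for example by iteratively regrouping factors in the expansion of Step~1, by tightening the pigeonhole so that every nonempty $A$ surviving falls into one of the two regimes, or by shrinking $\alpha$ so that $V<T^\alpha$ restricts the range where the gap is active; condition~(ii) follows the same template once the $v$-integration has been made uniform. The resulting $\alpha>0$ depends only on $r$ and hence only on $\Gamma_0$, as required.
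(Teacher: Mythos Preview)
Your strategy coincides with the paper's step for step through the singular case: reduce via Lemma~\ref{reduction_to_factorizable} to a factorizable $f_{**}$, expand each factor as constant plus mean-zero part, pigeonhole onto a single factorizable term $\tilde f_*$ with exactly $q=|A|$ mean-zero components, normalize so $\|\tilde f_*\|_{S^d}=1$, set $f(x)=\rho(x)\tilde f_*(a_yu_xx_0)$ with a factorizable cutoff $\rho$, and verify the singular axioms when $q\le d/2$ via Lemma~\ref{equidistribution_horospheres}.

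The divergence, and your gap, is in the mixing case $q>d/2$. You further expand each correlation factor $(u(v_j)\cdot f_j^0)\overline{f_j^0}$ as its mean $c_j(v_j)$ plus a mean-zero remainder and sum over $B\subseteq A$; the $B=\emptyset$ term then carries only matrix-coefficient decay $\prod_{j\in A}(1+|v_j|)^{-r}$, which integrates to $V^{d-rq}$ and indeed cannot meet $V^{d/2}$ for $r\le 1/2$. The paper does \emph{not} perform this second expansion. It writes the correlation as $\int f_*^v(a_yu_xx_0)\,\rho(x+vy)\overline{\rho(x)}\psi(x)\,dx$ with $f_*^v=(u_v\cdot\tilde f_*)\overline{\tilde f_*}$ and applies Lemma~\ref{equidistribution_horospheres} directly to $f_*^v$ with $k=q$, obtaining the single estimate
\[
\int_{\R^d} f(x+vy)\overline{f(x)}\psi(x)\,dx\ \ll\ (y^{r/3})^{q}\,\|f_*^v\|_{S^{q}}\,\|\psi\|_{S^{4d}}\ \ll\ (y^{r/3}|v|)^{q}\,\|\psi\|_{S^{4d}}.
\]
Integrating over $|v|<V$ gives $(y^{r/3})^qV^{d+q}\le V^{d/2}$ once $q\ge d/2$ and $V<T^{\alpha}$ with $\alpha\le r/6$; condition~(\ref{character_condition}) is obtained the same way via Lemma~\ref{horospheres_un_characters}, giving $\alpha\le r^2/(24+48r)$. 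In short, the paper draws all the decay from the $y$-equidistribution of the horosphere (paying the Sobolev cost $|v|^q$) rather than from unitary mixing of the individual coefficients $c_j(v_j)$; your $B$-expansion throws away exactly this $y$-saving on the $B=\emptyset$ piece, which is why it loses.

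One caution if you follow the paper's route: Lemma~\ref{equidistribution_horospheres} as stated requires $k$ components of vanishing integral, while the $j\in A$ components of $f_*^v$ have mean $c_j(v_j)$, not zero. The paper is terse here; the asserted bound $(y^{r/3}|v|)^q$ is what is used downstream, but you should be aware that its justification via Lemma~\ref{equidistribution_horospheres} is not literal.
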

\begin{proof}
By Lemma \ref{reduction_to_factorizable} we have
\begin{equation}\label{proof_1}
  |\int_{U^+} f_{**}(a_yux_0)\, d\lambda_S(u)-\int_{G/\Gamma} f_{**} \, d\mu_G|>\delta^{4d+1} \|f_{**}\|_{S^d}
\end{equation}
with $f_{**}$ a factorizable function. We can write $f_{**}-\int_{G/\Gamma} f_{**}\, d\mu_G$ as a sum of $O(1)$ factorizable functions with vanishing integral and each of the factors either constant or with integral zero, so from (\ref{proof_1}) we deduce that 
\[
 |\int_{U^+} \tilde f_*(a_yux_0)\, d\lambda_S(u)|\gg\delta^{4d+1} \|\tilde f_{*}\|_{S^d}
\]
with $\tilde f_*$ one those  $O(1)$ factorizable functions. By dividing $\tilde f_*$ by its norm we can assume that $\|\tilde f_*\|_{S^d}=1$, so defining 
\[
 f(x)=\rho(x)\tilde f_* (a_yu_xx_0)
\]
with $\rho\in C_c^{\infty}(\R^d)$ with $\rho=1$ whenever $u_x$ is in the support of $\lambda_S$, where $\rho(t)=\prod_{j\le d} \rho_0(t_j)$ with $\rho_0\in C_c^{\infty}(\R)$ and $\|\rho_0\|_{S^1}\le 1\le \int_{\R}\rho_0$,  we have $|\lambda_S(f)|\gg\delta^{4d+1}$ and then it only remains to show that $f$ satisfies the properties in the statement of the proposition. The bound on $\|f\|_{S^1}$ comes just from \cite[Lemma 2.2]{venkatesh_sparse}.

Let us assume first that at least $d/2$  components in the factorization of $\tilde f_*$ are constant.  We consider that the constant components are the last ones. Thus we get that $f$ is an $(y^{-1},\alpha)$-singular function for any $\alpha\le r/3$, since by Lemma \ref{equidistribution_horospheres} we have
\[
 \int_{\R^d} f(x) \psi(x) \, dx = \int_{G/\Gamma} \tilde f_*(a_yu_x x_0) \rho(x) \psi(x) \, dx \ll y^{r/3}  \|\psi\|_{S^{4d}}.
\]
Now, we have to consider the case in which $q\ge d/2$ components of $\tilde f_*$ have integral zero. By splitting $\rho$ into several functions we can assume that $\rho$ is supported in a translation of the unit ball. We have
\[
 \int_{\R^d} f(x+vy) \overline f(x) \psi(x) \, dx = \int_{\R^d} f_*^v (a_yu_x x_0) \rho(x+vy)\overline \rho(x)\psi(x) \, dx
\]
with $f_*^v (g\Gamma)= \tilde f_*(u_v g\Gamma) \overline{\tilde f_*}(g\Gamma)$, which is also factorizable. By writing each of its factors as a constant plus a function of vanishing integral and by expanding the product we see that $f_*^v$ is the sum of $O(1)$ special factorizable functions, so in order to prove (\ref{horosphere_condition}) for $f$ we can assume that $f_*^v$ is one of them. Then, from the $q$ factors coming from components of $\tilde f_*$ with zero integral, either $q/2$ of them are constant or $q/2$ of them have vanishing integral. In the first case\footnote{In a previous version of the paper I did not deal with this  case. I would like to thank Asaf Katz for pointing that error out to me.}, we can apply Lemma \ref{mixing} for each of those constants and condition (\ref{horosphere_condition}) follows for any $\alpha<rq/2d$, hence for any $\alpha<r/4$. In the second case by applying Lemma \ref{equidistribution_horospheres} and \cite[Lemma 2.2]{venkatesh_sparse} we have
\[
 \int_{\R^d} f(x+vy) \overline f(x) \psi(x) \, dx \ll (y^{r/3})^{q/2} \|f_*^v\|_{S^{q/2}}\|\psi\|_{S^{4d}} \ll (y^{r/3} |v|)^{q/2} \|\psi\|_{S^{4d}}
\]
and then condition (\ref{horosphere_condition}) follows for any $\alpha\le r/6$. Moreover
\[
 \int_{\R^d} f(x+vy)\overline f(x) \psi(x) e(hx) \, dx=\int_{\R^d} f_*^v (a_yu_x x_0) \rho(x+vy)\overline \rho(x)\psi(x) e(hx) \, dx
\]
so by Lemma \ref{horospheres_un_characters}  and \cite[Lemma 2.2]{venkatesh_sparse} we have {\color{black} $\|f_*^v\|_{S^l}\ll (1+|v|)^l$} and
\[
\int_{\R^d} f(x+vy)\overline f(x) \psi(x) e(hx) \, dx\ll   \tilde h^{-2d} \|\psi\|_{S^{4d}}
\]
for $|v|+|h|^2 \ll y^{-\frac{r^2}{12+24r}}$
and then (\ref{character_condition}) follows for every $\alpha\le r^2/(24+48r)$.

\end{proof}

We will dedicate the rest of the paper to prove the following result, which by Proposition \ref{from_group_to_real} implies Theorem \ref{main}{\color{black}, since $\frac{(r^2/48)^2}{200}>\frac{r^4}{700^2}$}:

\begin{theorem}[Main result for $\R^d$]
\label{main_real}
{\color{black}Let $0<\alpha<1$ and} let $f$ be either a $(T,\alpha)$-singular or a $(T,\alpha)$-mixing function. Let $S$ be a totally curved real-analytic submanifold of $\R^d$ of codimension $n<\alpha^2 d/200$, and $\lambda_S$ a probability measure with $C_c^{\infty}$ Radon-Nikodym derivative w.r.t. the volume measure on $S$. Then we have
\[
 |\lambda_S(f)|\ll T^{-c},
\]
with $c>0$ depending just on $S$ and $\alpha${\color{black}, and the implicit constant on the support of $\lambda_S$}  .

\end{theorem}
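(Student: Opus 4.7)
Following the dichotomy of Proposition \ref{from_group_to_real}, the argument splits according to whether $f$ is $(T,\alpha)$-singular or $(T,\alpha)$-mixing. The singular case is much easier: writing $f(x_1,x_2)=\tilde f(x_1)\,\tilde\rho(x_2)$ with $x_1\in\R^k$, $k\le d/2$, I would project $\tilde\rho\,d\lambda_S$ to $\R^k$ by $\pi:(x_1,x_2)\mapsto x_1$. Since $\dim S=d-n>d/2\ge k$, and since total curvedness prevents $S$ from being locally tangent to any coordinate subspace $\{x_1=\mathrm{const}\}$ on an open piece, real-analyticity implies $\pi|_S$ is a submersion off a proper analytic subset. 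The pushforward is then a smooth compactly supported function $h$ on $\R^k$ with bounded Sobolev norms. Fixing $\tilde\eta\in C_c^\infty(\R^{d-k})$ with $\int\tilde\rho\,\tilde\eta=1$ and applying the singular bound to $\psi(x_1,x_2)=h(x_1)\,\tilde\eta(x_2)$ yields $|\lambda_S(f)|=|\int\tilde f\,h\,dx_1|=|\int_{\R^d} f\,\psi\,dx|\ll T^{-\alpha}$, which is better than the target.

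\textbf{Mixing case.} This is the heart of the matter, and I would combine stationary phase for $\widehat{\lambda_S}$ with a Weyl--Van der Corput step driven by the mixing conditions. First, a smooth partition of unity plus real-analyticity of $S$ reduces to a local graph parametrization $y\mapsto(y,\phi(y))$ with $y$ in a small ball of $\R^{d-n}$. For each $\xi$, the phase $\xi\cdot(y,\phi(y))$ has critical points exactly where $\xi\perp T_pS$, and there its Hessian coincides with the quadratic form $Q_\xi(v)=\langle\xi,\mathrm{II}_p(v,v)\rangle$ on $T_pS$. The totally curved hypothesis forces $\mathrm{rank}\,Q_\xi>d-2n$ for every nonzero normal $\xi$, so stationary phase delivers $|\widehat{\lambda_S}(\xi)|\ll|\xi|^{-\beta}$ with $\beta\ge(d-2n+1)/2$, modulo explicit oscillating factors $e(\Phi(\xi))$. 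Now smooth $\lambda_S$ at a scale $\epsilon$ to be chosen by convolving with a bump $\eta_\epsilon$, and apply the Weyl--Van der Corput shift-averaging trick to $|\lambda_S(f)|^2$: this reduces the square to an average of autocorrelations $\int f(x+v/T)\bar f(x)\,\psi(x)\,dx$ of $f$, weighted by $\psi\approx|G|^2$ for $G=\lambda_S*\eta_\epsilon$, over shifts $|v|\le V$. Condition \eqref{horosphere_condition} absorbs the non-oscillatory part of this average for $V\le T^\alpha$, while condition \eqref{character_condition} controls the oscillatory frequencies produced by the stationary-phase factors $e(\Phi(\xi))$ coming from $\widehat{\lambda_S}$. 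Optimizing $\epsilon$ and $V$ against the strong $\widehat{\lambda_S}$-decay made available by small codimension should yield $|\lambda_S(f)|\ll T^{-c}$.

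\textbf{Main obstacle.} The principal difficulty I anticipate is carrying out the stationary phase uniformly in $\xi$. The integral $\widehat{\lambda_S}(\xi)$ collects contributions from every $p\in S$ with $\xi\perp T_pS$, and this critical locus can degenerate both when $Q_\xi$ loses rank and when several critical points coalesce. Total curvedness controls precisely how degenerate $Q_\xi$ may become---equivalently, non-vanishing of the resultant of Proposition \ref{totally_curved_char}---but turning this algebraic statement into uniform pointwise bounds on the oscillatory integral will presumably require a careful stratification of $\xi$-space, likely through some resolution of singularities of the variety $\{R(p)=0\}$. The codimension restriction $n<\alpha d/200$ is precisely what allows the stationary-phase decay $\beta\sim d/2$ to dominate the Sobolev losses from $\|\psi\|_{S^{4d}}\ll\epsilon^{-O(n+d)}$ and the shift-boundary losses $\sim V/(T\epsilon)$ that inevitably appear in the Weyl--Van der Corput manipulation.
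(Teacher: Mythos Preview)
Your overall architecture---treat the singular and mixing cases separately, and in the mixing case combine stationary phase for $\widehat{\lambda_S}$ with a Weyl--Van der Corput step fed by the two mixing hypotheses---is exactly the paper's. But two of your concrete steps would fail as written.

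\textbf{Singular case.} The assertion that the pushforward $h=\pi_*(\tilde\rho\,d\lambda_S)$ lies in $C_c^\infty(\R^k)$ with bounded Sobolev norms is not correct. You are right that total curvedness (via Proposition~\ref{curved_implies_primitive}, using $k\le d/2\le m-n$) forces $\pi|_S$ to be a submersion off a proper analytic subvariety, but near the critical values the density of $h$ blows up like a negative power of the Jacobian (think of projecting the parabola $(t,t^2)$ to the second coordinate). Since the singular condition must be tested against a $\psi$ with controlled $\|\psi\|_{S^{4d}}$, this is fatal. The paper's fix is to first localize: by Proposition~\ref{localization_in_space} one may replace $\lambda_S$ by $\lambda_{S,x_0,\beta}$ on a ball of radius $\beta$; a sublevel estimate for the real-analytic Jacobian (Lemma~\ref{sublevel_set_lemma}) then guarantees that for most such $x_0$ the Jacobian exceeds $\delta^D$, and on that ball the change of variables yields a test function with $\|\psi\|_{S^{4d}}\ll\beta^{-8d}$, which the bound $T^{-\alpha}$ beats after optimizing $\beta,\delta$.

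\textbf{Mixing case.} You have correctly located the main obstacle, but the paper resolves it by localizing in \emph{$x$-space} rather than stratifying $\xi$-space: it defines ``$\delta$-curved'' points of $S$ (Definition~\ref{curved points}) where the $n$th-smallest Hessian eigenvalue is quantitatively bounded below, shows via a real Nullstellensatz that such points miss only measure $O(\delta^{1/c_S})$ of $S$ (Proposition~\ref{sublevel}), and runs stationary phase only on balls of radius $\beta$ around them, where it is automatically uniform. No resolution of singularities is needed. Second, the paper's Van der Corput step is done in \emph{frequency} space: after stationary phase puts $\widehat{\lambda_{S,x_0,\beta}}(\xi)$ into the explicit form $\Phi(\xi/U)e(U\Phi^*(\xi/U))$ and after a low/mid/high frequency decomposition (which your outline omits but is essential), one shifts $\xi\mapsto\xi+\zeta$, averages, and squares; Plancherel then converts the bilinear expression into $\widehat{f_s}(h)=\int f(x+s)\bar f(x)e(-hx)\,dx$, which is exactly what \eqref{character_condition} controls. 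Your physical-space smoothing $G=\lambda_S*\eta_\epsilon$ does not obviously produce the characters $e(hx)$ needed to invoke \eqref{character_condition}; and the approximation $\lambda_S(f)\approx\int fG$ already forces $\epsilon\ll T^{-1}$, whence $\|G\|_{S^{4d}}\gg T^{n+4d}$, which the gain $V^{d/2}$ from \eqref{horosphere_condition} cannot absorb.
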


\section{Geometric properties of the submanifold}

We  begin by expressing that a manifold is totally curved in different ways.

\begin{proposition}[Totally curved characterizations]
\label{totally_curved_char}
Let $S=\varphi((-1,1)^m)$ with $\varphi:(-2,2)^m\to \R^d$, $\varphi(t)=(t,w(t))$, $w$ a real-analytic function. $S$ being totally curved at a point $p$ is equivalent to any of the following conditions (with $n=d-m$):
\begin{enumerate}
 \item For every subspace $V$ of $T_p S$ of dimension $n$ there exist $v_1,\ldots v_n$ in $V$ and tangent fields $X_1,\ldots, X_n$ such that
 \[
  \langle T_p S, D_{v_j} X_j(p): j\le n  \rangle = \R^d
 \]
 with $D_v$ the directional derivative in the direction $v$.
 
 \item For every subspace $V$ of $T_p S$ of dimension $n$ there exist functions $v_1,\ldots v_n$ smooth at $\varphi^{-1}(p)$ with $v_j(\varphi^{-1}(h))\in T_h S$ for $h$ near $p$ and $V=\langle v_j(\varphi^{-1}(p)): j\le n\rangle$ such that
 \[
  \langle T_p S, \partial_s v_j(\varphi^{-1}(p)): s\le m, j\le n\rangle =\R^d.
 \]

 \item For every nonzero $a\in\R^d$ orthogonal to $T_p S$ we have that the dimension of $\ker H_a$ is less than $n$, with $H_a:\R^m\to \R^m$ the Hessian of $a \varphi$ at $\varphi^{-1}(p)$.
 
 \item For every nonzero $z\in \R^n$ we have that the dimension of $\ker H_z$ is less than $n$, with $H_z$ the Hessian of $z w$ at $\varphi^{-1}(p)$.
 
 \item If $ \lambda^m+s_{m-1}(z) \lambda^{m-1}+\ldots+s_1(z)\lambda+s_0(z)$ is the characteristic polynomial of $H_z$, the Hessian of $zw$ at  $\varphi^{-1}(p)$, then the system of homogeneous polynomial equations $s_0(z)=s_1(z)=
\ldots =s_{n-1}(z)=0$ does not have a  solution $z\in\R^n\setminus \{0\}$.
\end{enumerate}
\end{proposition}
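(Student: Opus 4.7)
The plan is to establish the chain of equivalences (Definition) $\Leftrightarrow$ (i) $\Leftrightarrow$ (ii) $\Leftrightarrow$ (iii) $\Leftrightarrow$ (iv) $\Leftrightarrow$ (v) by repeatedly reformulating $\text{II}_p(V\times T_pS)=(T_pS)^\perp$ in linear-algebraic terms. The unifying observation is that in Euclidean ambient space $\text{II}_p(X,Y)=(D_XY)^\perp|_p$, so the second fundamental form is simply the normal projection of the ordinary directional derivative. I would work in the graph parametrization: set $t_0=\varphi^{-1}(p)$, take $e_s=\partial_s\varphi(t_0)$ as a basis of $T_pS$, and parametrize $(T_pS)^\perp$ linearly by $z\in\R^n$ via $a(z)=(-(\partial w(t_0))^{T}z,\,z)$, which is a bijection $\R^n\to(T_pS)^\perp$.

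For (Def) $\Leftrightarrow$ (i) $\Leftrightarrow$ (ii), the point is that $(T_pS)^\perp$ has dimension $n$, so surjectivity of $\text{II}_p|_{V\times T_pS}$ is equivalent to producing $n$ pairs whose $\text{II}_p$-images are linearly independent. Extending the $T_pS$-entry of each pair to a tangent field $X_j$ and using $\text{II}_p(v_j,X_j(p))=(D_{v_j}X_j(p))^\perp$, appending $T_pS$ to both sides rephrases this as (i). For (i) $\Leftrightarrow$ (ii), I would pull a tangent field back through $\varphi$ to obtain $v_j(t)\in T_{\varphi(t)}S$; the chain rule gives $D_vX_j(p)=\sum_s c_s\partial_sv_j(t_0)$ when $v=\sum_sc_se_s$, so the span in (i) sits inside $\text{span}(\partial_sv_j(t_0):s\le m)$, whose normal part is exactly $\text{II}_p(v_j(t_0),T_pS)$. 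Letting the $v_j(t_0)$ run over a basis of $V$ then yields (ii), and conversely contracting each $\partial_sv_j$ against a single chosen $v_j\in V$ recovers (i).

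For (ii) $\Leftrightarrow$ (iii) I would dualize: failure of the surjection is equivalent to the existence of a nonzero normal $a$ with $a\cdot\text{II}_p(V,T_pS)=0$. For such an $a$ the function $a\varphi$ satisfies $\partial_s(a\varphi)(t_0)=a\cdot e_s=0$, so $t_0$ is critical, and its Hessian is $H_a(s,s')=a\cdot\partial_s\partial_{s'}\varphi(t_0)=\text{II}_p(e_s,e_{s'})\cdot a$. Thus, in the basis $\{e_s\}$, $H_a$ is the matrix of the scalar symmetric form $\text{II}_p(\cdot,\cdot)\cdot a$, and the orthogonality condition becomes $V\subset\ker H_a$. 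So the definition fails iff some nonzero normal $a$ admits a dimension-$n$ subspace $V\subset\ker H_a$, i.e.\ iff $\dim\ker H_a\ge n$ for some $a$, which is the negation of (iii).

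The remaining equivalences are pure linear algebra. For (iii) $\Leftrightarrow$ (iv), substituting $a=a(z)$ the linear-in-$t$ part of $a(z)\varphi(t)$ dies under $\partial_s\partial_{s'}$, so $H_{a(z)}=H_z$; combined with the bijectivity $z\mapsto a(z)$ on $\R^n\setminus\{0\}\to(T_pS)^\perp\setminus\{0\}$, (iii) and (iv) coincide. For (iv) $\Leftrightarrow$ (v), the multiplicity of $0$ as a root of the characteristic polynomial of a matrix equals the number of its trailing zero coefficients, so $\dim\ker H_z\ge n$ is exactly $s_0(z)=\cdots=s_{n-1}(z)=0$; the $s_j$ are automatically homogeneous polynomials in $z$ (of degree $m-j$) because $H_z$ depends linearly on $z$. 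The only step that takes any care is the bookkeeping in (i) $\Leftrightarrow$ (ii), where one must keep straight that a chosen $v_j\in V$ together with a full tangent field $X_j$ encodes the same information as a single tangent field $v_j$ whose value at $t_0$ lies in $V$; every other equivalence reduces to the single identity $\text{II}_p(e_s,e_{s'})\cdot a=H_a(s,s')$ together with standard facts about characteristic polynomials.
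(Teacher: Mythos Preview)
Your approach is essentially the same as the paper's: both use the Gauss formula $D_vX=\nabla_vX+\text{II}_p(v,X)$ to get (Def)$\Leftrightarrow$(i)$\Leftrightarrow$(ii), both reduce (ii)$\Leftrightarrow$(iii) to the identity $a\cdot\text{II}_p(e_s,e_{s'})=(H_a)_{ss'}$ (the paper writes this out as the explicit computation $x^tH_ar_j=0$, you phrase it as the dual ``$V\subset\ker H_a$''), and both treat (iii)$\Leftrightarrow$(iv)$\Leftrightarrow$(v) as routine linear algebra.

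Two small points to tighten. First, in (iv)$\Leftrightarrow$(v) you equate $\dim\ker H_z\ge n$ with the vanishing of the $n$ trailing coefficients; the latter is the \emph{algebraic} multiplicity of $0$, and you need $H_z$ symmetric (which it is, being a Hessian) to conclude the geometric multiplicity matches. Second, your (i)$\Leftrightarrow$(ii) paragraph overloads the symbol $v_j$ (first as vectors in $V$, then as the pulled-back fields $X_j\circ\varphi$, whose values at $t_0$ are $X_j(p)$ and need not lie in $V$); the clean route is to invoke the symmetry of $\text{II}_p$ explicitly to swap the roles of the $V$-slot and the $T_pS$-slot, then extend a \emph{basis} of $V$ to tangent fields for (ii). The paper is equally terse here, simply citing the Gauss formula.
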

\begin{proof}
The equivalence with (i) and (ii) comes just from the definition of the second fundamental form by the equation
\[
 D_v X(p) = \nabla_v X(p) + \text{II}_p(v, X(p))
\]
with $\nabla_v X\in T_p S$ the covariant derivative on $S$. Moreover (iii), (iv) and (v) are clearly equivalent. Thus we just have to prove that (ii) and (iii) are equivalent.
 
Let us assume that (ii) is not satisfied. Then there exists $V$ such that
\[
\langle T_p S, \partial_s v_j(\varphi^{-1}(p)): s\le m, j\le n \rangle \neq \R^d.
\]
We can write $T_p S=\langle (\partial_s\varphi)(\varphi^{-1}(p)):s\le m\rangle$ and $v_j=\sum_{k\le m} r_j^k \partial_k\varphi$. Then we have
\[
 \partial_s v_j= \sum_{k\le m} \partial_s r_j^k \partial_s \varphi + u_j^s
\]
with $u_j^s=\sum_{k\le m} r_j^k \partial_s\partial_k \varphi$ so 
\[
  \langle T_p S, u_j^s: s\le m, j\le n \rangle \neq \R^d.
\]
But then there exists $a\neq 0$ in $(T_p S)^{\perp}$ orthogonal to
\[
 \sum_{s\le m} x_s u_j^s = \sum_{s,k\le m} x_s r_j^k \partial_s \partial_k \varphi 
\]
for every $x_s\in \R$ and $j\le n$. Thus
\[
 0=a\sum_{s,k\le m} x_s r_j^k \partial_s \partial_k \varphi =\sum_{s,k\le m} x_s r_j^k \partial_s \partial_k (a\varphi)=x^t H_a r_j
\]
with $x=(x_s)_{s\le m}$ and $r_j=(r_j^k)_{k\le m}$ in $\R^m$. This implies that
$
 H_a r_j=0 
$
for every $j\le n$. That $\dim V=n$ implies that $r_j$ are independent so $\dim\ker H_a\ge n$, and then (iii) is not satisfied. We can clearly reverse our reasoning to show that {\color{black} if (ii) is satisfied so is (iii).}

\end{proof}

As we said in the introduction, being totally curved rules out submanifolds of the type $S_1\times S_2$. Now we are going to prove that, and we begin by formalizing the kind of submanifolds that we want to avoid.

\begin{definition}[Primitive dimension]
\label{Primitive manifold} Let $S$ be a submanifold of $\R^d$ .  For any $p\in S$,  we define its \emph{primitive dimension at $p$} as the maximum $k\in\N$ for which the restriction of  $S$ near $p$ to any $k$ components of $\R^d$ is a manifold of dimension $k$.
\end{definition}

In terms of a parametrization of $S$, $\varphi=(\varphi_1,\ldots, \varphi_d)$, the primitive dimension is the maximal $k$ such that $\{\nabla \varphi_{i_j} \}_{j=1}^k$ are linearly independent for every $\{i_1,\ldots, i_k\}\subset \{1,\ldots ,d \}$. Clearly, if $S_1$ is a submanifold of $\R^k$ of dimension smaller than $k$, then the primitive dimension of $S=S_1\times S_2$ is less than $k$.

\begin{lemma}[Low primitive dimension implies smooth dependence of components]
\label{degeneracy}
Let $m\ge 2n \ge 2$. Let $S$ be a submanifold of $\R^d$ of dimension $m$,  codimension $n$ and   primitive dimension at most $m-n$ for every point in a neighborhood of $p$, with $S=\varphi((-1,1)^m)$, $\varphi:(-2,2)^m\to \R^d$ real-analytic and $\varphi(t)=(t,w(t))$. Then, perhaps after rearranging some components, for some $1\le h\le n$ we have
\[
\partial_j w_r=\sum_{i\le h-1} b_j^i \partial_i w_r  \qquad h\le j\le h+n-1
\]
for every $r\le h$, with $b_j^i$ real-analytic functions in a neighborhood of $p$.
\end{lemma}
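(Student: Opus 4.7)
The plan is to translate the primitive-dimension hypothesis into a uniform rank-deficiency statement for a submatrix of the Jacobian of $w$, and then read off the coefficients $b_j^i$ via Cramer's rule.

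First, I would unpack the primitive dimension condition through its characterization in terms of gradients. Since $\varphi(t) = (t, w(t))$ forces $\nabla \varphi_i = e_i$ for $i \leq m$, any subset $I \subset \{1, \ldots, d\}$ of size $m - n + 1$ with $\{\nabla \varphi_i\}_{i \in I}$ linearly dependent at $t$ must split as $I = I_1 \cup (m + I_2)$ with $I_1 \subset \{1, \ldots, m\}$, $I_2 \subset \{1, \ldots, n\}$, $|I_2| \geq 1$; after eliminating the standard basis directions contributed by $I_1$, the dependence becomes rank deficiency of the $(n - 1 + |I_2|) \times |I_2|$ block $N(t) = (\partial_j w_r(t))_{j \notin I_1,\, r \in I_2}$. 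Since there are only finitely many pairs $(I_1, I_2)$, a pigeonhole argument combined with the fact that rank deficiency of $N$ is the simultaneous vanishing of real-analytic minors in $t$ shows that a single pair $(I_1^*, I_2^*)$ witnesses the rank drop throughout the entire connected neighborhood of $p$.

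Next I would rearrange and rename to normalize notation. Setting $h := |I_2^*|$ and permuting components of $\R^d$ so that $I_2^* = \{1, \ldots, h\}$ and $\{1, \ldots, m\} \setminus I_1^* = \{1, \ldots, h + n - 1\}$, we obtain the $(h + n - 1) \times h$ block $M(t) = (\partial_j w_r(t))_{j \leq h + n - 1,\, r \leq h}$ with $\mathrm{rank}(M(t)) \leq h - 1$ everywhere in the neighborhood. The desired conclusion is equivalent to stating that the last $n$ rows of $M$ lie in the linear span of the first $h - 1$ rows, with real-analytic coefficients. To read off the $b_j^i$, I would permute the first $h + n - 1$ rows so that the top $h - 1$ rows of $M$ become linearly independent at $p$; by real-analyticity of the corresponding $(h-1) \times (h-1)$ pivot minor they stay independent on a smaller open neighborhood $U'$, and Cramer's rule then writes each of the rows $h, \ldots, h + n - 1$ as an analytic combination of the first $h - 1$.

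The main obstacle lies in this last step, specifically in guaranteeing that $h - 1$ rows of $M$ can be made independent at $p$ itself: if the rank of $M$ drops strictly at $p$ below its generic value on the neighborhood, no such selection exists and the Cramer expressions produce poles at $p$. Dealing with this degenerate case requires selecting $h$ based on the rank of $M$ at $p$ rather than its generic rank, and nesting a smaller rank-deficient submatrix of the correct shape inside $M$; the existence of such a nested submatrix uses the full force of the hypothesis --- that primitive dimension is low at every point of the neighborhood, not just generically --- to secure uniform rank control when reducing both column and row counts.
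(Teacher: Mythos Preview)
Your approach is essentially the paper's: locate a fixed set of $m-n+1$ dependent gradients by a finiteness/analyticity argument, translate the dependence into rank deficiency of the block $D=(\partial_j w_r)_{j\le n+h-1,\,r\le h}$, and then read off the $b_j^i$ by Cramer's rule after rearranging. You are more explicit than the paper about the possibility that the rank of $D$ drops at $p$ below its generic value---the paper handles this in one line with ``we can assume that the rank of $D$ equals a constant $0\le s<h$ in a neighborhood of $\varphi^{-1}(p)$'' followed by a further rearrangement of components, which is your nested-submatrix idea in compressed form.
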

\begin{proof}
 By definition of primitive dimension there exist $m-n+1$ components whose gradients are linearly dependent. Let us assume that exactly $h\le n$ of those components correspond to the last $n$ ones. Then, after rearranging them we can assume that
\[
 \dim \langle \partial \varphi_{n+h}, \partial\varphi_{n+h+1}, \ldots, \partial\varphi_{m+h}\rangle <m-n+1
\]
for some $1\le h\le n$ in a neighborhood of $p$.  Since $\varphi(t)=(t,w(t))$ we see that the matrix $(\partial \varphi_j)_{n\le j\le m}$ has the shape
\[
 \begin{pmatrix}
  0  &   D  \\
  I   &   *   \\
 \end{pmatrix}
\]
with $I$ the identity matrix of dimension $m+1-n-h$ and
\[
D=(\partial_j w_r)_{j\le n+h-1, r\le h}.
\]
Then  the rank of $D$ must be less than $h$. We can assume that the rank of $D$ equals a constant $0\le s<h$ in a neighborhood of $\varphi^{-1}(p)$. If $s=0$ then the result follows with $b_j^i=0$. Otherwise there exists some $s\times s$ non-zero minor and then, after possibly rearranging the components, the result follows from Cramer's rule.
\end{proof}

Now we shall prove that if $S$ is totally curved then its primitive dimension is larger than $m-n$.

\begin{proposition}[Totally curved implies large primitive dimension]
\label{curved_implies_primitive}
Let $S$ be totally curved at $p$, with $S=\varphi((-1,1)^m)$, $\varphi:(-2,2)^m\to\R^d$ real-analytic and $\varphi(t)=(t,w(t))$, where $m+n=d$. Then $S$ has primitive dimension larger than $m-n$ for every point in some neighborhood of $p$.
\end{proposition}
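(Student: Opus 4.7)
The plan is to argue by contradiction, invoking Lemma \ref{degeneracy}. Suppose, for contradiction, that the primitive dimension of $S$ is at most $m-n$ on a whole neighborhood $U$ of $p$; I will show that then $S$ cannot be totally curved at $p$. Lemma \ref{degeneracy} applied on $U$ produces, after a suitable rearrangement of components, some $1 \le h \le n$ and real-analytic functions $b_j^i$ on $U$ satisfying
\[
\partial_j w_r = \sum_{i \le h-1} b_j^i \, \partial_i w_r, \qquad h \le j \le h+n-1,\ r \le h.
\]
The goal is then to exhibit a nonzero $z \in \R^n$ with $\dim \ker H_z(p) \ge n$, which by characterization (iv) of Proposition \ref{totally_curved_char} contradicts total curvature at $p$.

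The natural candidate is $z = (z_1, \ldots, z_h, 0, \ldots, 0)$, supported in the first $h$ coordinates, because for any such $z$ the lemma's relation can be multiplied by $z_r$ and summed over $r \le h$ to give, as an analytic identity on $U$,
\[
\partial_j (zw) = \sum_{i \le h-1} b_j^i \, \partial_i(zw), \qquad h \le j \le h+n-1.
\]
Differentiating in $t_k$ and evaluating at $p$ yields
\[
(H_z)_{kj}(p) = \sum_{i \le h-1} (\partial_k b_j^i)(p)\, \partial_i(zw)(p) + \sum_{i \le h-1} b_j^i(p)\, (H_z)_{ki}(p).
\]
To kill the inconvenient first sum, I impose $\partial_i(zw)(p) = 0$ for every $i \le h-1$; this amounts to $h-1$ linear equations in the $h$ free parameters $z_1, \ldots, z_h$, so a nonzero solution exists.

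With such $z$, each of the $n$ columns $j = h, h+1, \ldots, h+n-1$ of $H_z(p)$ lies in the span of the $h-1$ columns indexed by $i = 1, \ldots, h-1$. The column space of $H_z(p)$ is therefore generated by the columns in $\{1,\ldots,h-1\} \cup \{h+n,\ldots,m\}$, so $\mathrm{rank}(H_z(p)) \le (h-1) + (m-h-n+1) = m - n$, giving $\dim \ker H_z(p) \ge n$ as required.

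I expect the main obstacle to be the choice of $z$ at the middle step: the first-order dependence among the $\partial_j w_r$ supplied by Lemma \ref{degeneracy} does not transfer mechanically to a dependence among Hessian columns, owing to the error term $(\partial_k b_j^i)(p)\,\partial_i(zw)(p)$ that appears on differentiation. The whole point of working with $z$ supported in $h$ coordinates rather than just one is to gain $h$ degrees of freedom against the $h-1$ constraints needed to annihilate that error term, so that a nonzero $z$ remains. Once this is arranged, the final rank count and the contradiction with Proposition \ref{totally_curved_char}(iv) are routine linear algebra.
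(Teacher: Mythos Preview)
Your core linear-algebra argument is correct and takes a genuinely different route from the paper. The paper works through characterization (ii) of Proposition~\ref{totally_curved_char}: from the relations supplied by Lemma~\ref{degeneracy} it constructs the $n$ tangent fields $v_j=\partial_j\varphi-\sum_{i\le h-1}b_j^i\,\partial_i\varphi$ (for $h\le j\le h+n-1$), checks they are independent at $p$, and then computes the normal components of the $\partial_s v_j$ explicitly to show that $\langle T_pS,\partial_s v_j\rangle$ lies in a subspace of dimension at most $m+n-1$. You instead go straight for characterization (iv): choosing $z$ supported in the first $h$ coordinates and using the $h-1$ linear constraints $\partial_i(zw)(p)=0$ to annihilate the product-rule error terms is a neat trick, and the resulting rank bound $\operatorname{rank}H_z(p)\le m-n$ is immediate. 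Both proofs rest on Lemma~\ref{degeneracy} identically; what your route buys is that the final step is pure column-dependence bookkeeping rather than the coordinate computation the paper carries out for $u_s^j$ and the vectors $g_i$.

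There is, however, a genuine gap in your opening reduction. The negation of ``primitive dimension $>m-n$ at every point of some neighborhood of $p$'' is \emph{not} ``primitive dimension $\le m-n$ on a whole neighborhood of $p$''; it is only the existence of a sequence $q_k\to p$ with primitive dimension $\le m-n$ at each $q_k$. You must bridge from such a sequence to a full neighborhood before you are entitled to invoke Lemma~\ref{degeneracy}, whose hypothesis is precisely the neighborhood condition. The paper does address this (briefly): it passes to a fixed $(m-n+1)$-tuple of components by pigeonhole and then uses the real-analyticity of $\varphi$, together with the rank argument inside the proof of Lemma~\ref{degeneracy}, to upgrade the pointwise rank drop to one that persists on a neighborhood. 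As written, your argument proves only the weaker implication ``low primitive dimension on a neighborhood $\Rightarrow$ not totally curved at $p$'', which does not yield the proposition; you should insert the real-analyticity step to close the gap.
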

\begin{proof}
Suppose $S$ has primitive dimension at most $m-n$ for a sequence of points converging to $p$.  We must then prove that $S$ is not totally curved in a neighborhood of $p$. Since $S$ is real-analytic, looking into the proof of Lemma \ref{degeneracy} we see that its primitive dimension must be at most $m-n$ in a neighborhood of $p$. Thus by Lemma \ref{degeneracy} we have
\begin{equation}\label{EQ1}
\partial_j w_r=\sum_{i\le h-1} b_j^i \partial_i w_r  \qquad h\le j\le h+n-1 \quad r\le h.
\end{equation}
We consider the tangent fields
\begin{equation}\label{EQ2}
 v_j=\partial_j \varphi - \sum_{i\le h-1} b_j^i \partial_i \varphi        \qquad   h\le j\le h+n-1.
\end{equation}
We are going to see that the $v_j(p)$ generate a vector subspace of $T_p S$ of dimension $n$ and that
\begin{equation}\label{equ3}
 Q=\langle T_p S, \partial_s v_j: s\le m, h\le j\le h+n-1\rangle \neq \R^d,
\end{equation}
so by Proposition \ref{totally_curved_char} (ii) the result follows. We have
\[
\partial_l \varphi =(0,\ldots, 0, 1,0, \ldots ,0, \partial_l w_1, \ldots, \partial_l w_n)
\]
\[
 \partial_s\partial_l \varphi =(0, \ldots, 0, 0, 0, \ldots, 0, \partial_s\partial_l w_1,\ldots, \partial_s\partial_l w_n)
\]
for every $s,l\le m$, with the $1$ in the $l$th position. Then 
\[
 v_j=(-b_j^1,\ldots, -b_j^{h-1}, 0, \ldots ,0, 1,\ldots)
\]
with 1 in the $j$th position, which shows that $v_j$, $h\le n\le h+n-1$ are linearly independent. Differentiating we get that
\[
 \partial_s v_j= u_s^j- \sum_{i\le h-1} \partial_s b_j^i \partial_i\varphi
\]
with
\[
 u_s^j=\partial_s\partial_j\varphi - \sum_{i\le h-1} b_j^i \partial_s\partial_i \varphi 
\]
so
\[
Q=\langle T_p S, u_s^j: s\le m, h\le j\le h+n-1\rangle.
\]
By differentiating (\ref{EQ1}), for any $j,s$ we have
\[
 u_s^j-\sum_{i\le h-1} (\partial_s b_j^i) g_i \in 0\times \R^{n-h}
\]
with $g_i=(0,0,\ldots, 0, \partial_i w_1, \ldots, \partial_i w_h,0, \ldots 0)$. Thus
\[
 u_s^j\in \langle 0\times\R^{n-h}, g_i: i\le h-1 \rangle
\]
which is a vector space of dimension at most $n-1$, hence (\ref{equ3}) follows.

\end{proof}

\section{Singular case}

In this section we want to prove Theorem \ref{main_real} for singular functions. We start by defining a local version of $\lambda_S$. Let us fix a $C_c^{\infty}(\R^d)$ function $\psi$ with $\int_{\R^d} \psi=1$ and support contained in the unit ball of $\R^d$. For any $x_0\in \R^d$ and any $0<\beta<1/2$ we define the measure
\[
 \lambda_{S,x_0,\beta}(f)=\int_{\R^d} f(x) \frac{1}{\beta^m} \psi(\frac{x-x_0}{\beta}) \, d\lambda_S(x).
\]
By compactness of the support of $\lambda_S$ we have that $\lambda_{S,x_0,\beta}(\R^d)$ is bounded independently of $x_0$ and $\beta$.
If $\sigma_S$ is the volume measure on $S$ we have the following localization result.

\begin{proposition}[Localization in space]
\label{localization_in_space}
Let {\color{black}$0<\beta\le \delta<1$}.  {\color{black} There exists a constant $c>0$ (depending just on $S$) such that  for any} $f:\R^d\to \R$ with $\|f\|_{L^{\infty}}\le 1$ and $|\lambda_S(f)|>\delta$, the set of $x_1$ in $S$ for which there exists an $x_0\in \R^d$ at distance {\color{black} at most $2\beta$ with  $|\lambda_{S,x_0,\beta}(f)|>c\delta$} has $\sigma_S$-measure $\gg \delta$ (with the implicit constant depending just on $S$).
\end{proposition}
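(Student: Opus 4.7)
The plan is a Fubini--plus--averaging argument: integrate $\lambda_{S,x_0,\beta}(f)$ in $x_0$ to recover $\lambda_S(f)$ up to a factor $\beta^{n}$, then use that the integrand is uniformly bounded in $x_0$ to deduce by Chebyshev that the set of $x_0$ with $|\lambda_{S,x_0,\beta}(f)|\gg\delta$ is large, and finally push this set onto $S$ via the nearest--point projection.

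First, swapping the order of integration and using $\int_{\R^{d}}\beta^{-d}\psi((x-x_0)/\beta)\,dx_0=1$, I would obtain
\[
 \int_{\R^{d}} \lambda_{S,x_0,\beta}(f)\,dx_0 \;=\; \int_{\R^{d}} f(x)\,\beta^{\,d-m}\,d\lambda_S(x) \;=\; \beta^{\,n}\,\lambda_S(f),
\]
where $n=d-m$ is the codimension of $S$. The integrand on the left vanishes outside the $\beta$-tubular neighbourhood $N_\beta$ of $\operatorname{supp}(\lambda_S)\subset S$, whose Lebesgue measure is $\asymp \beta^{n}$. Moreover, because $\lambda_S=g\,d\sigma_S$ with $g\in C_c^{\infty}$ and $S$ is smooth, in any local chart on $S$ the density of $\lambda_S$ with respect to Lebesgue measure in the parameter is bounded, so
\[
 |\lambda_{S,x_0,\beta}(f)| \;\le\; \|f\|_{L^{\infty}}\,\lambda_{S,x_0,\beta}(\R^{d}) \;\ll\; 1
\]
uniformly in $x_0$ and in $\beta\in(0,1/2)$.

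Combining these two facts, the average of $\lambda_{S,x_0,\beta}(f)$ over $N_\beta$ is $\gg\delta$, and since the function is bounded by an absolute constant, the Markov inequality produces a set $E\subset N_\beta$ of Lebesgue measure $\gg \delta\beta^{n}$ on which $|\lambda_{S,x_0,\beta}(f)|\gg\delta$. To convert $E$ into a subset of $S$, I would use the nearest--point projection $\pi: N_\beta\to S$, well defined for $\beta$ smaller than the reach of $S$; its fibres are $n$-disks of radius $\beta$ of volume $\asymp \beta^{n}$, so a one-line Fubini estimate in tubular coordinates gives $\sigma_S(\pi(E)) \ge C^{-1}\beta^{-n}\,\mathrm{vol}(E)\gg \delta$. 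Every $x_1\in\pi(E)$ then has a preimage $x_0\in E$ at distance $\le \beta$, which is the statement of the proposition.

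I expect no substantive obstacle: the three ingredients---the Fubini calculation, the uniform $L^{\infty}$ bound on the localised mass, and the tubular--neighbourhood coarea---are all elementary. The hypothesis $\beta\ll\delta$ enters only to absorb the boundary of $\operatorname{supp}(\lambda_S)$, whose $\beta$-tube has $\sigma_S$-measure $O(\beta)\ll\delta$ and therefore cannot spoil the conclusion.
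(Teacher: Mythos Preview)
Your argument is correct and essentially identical to the paper's: both use the Fubini identity $\int_{\R^d} \lambda_{S,x_0,\beta}(f)\,dx_0 = \beta^{n}\lambda_S(f)$, the uniform bound $\lambda_{S,x_0,\beta}(\R^d)\ll 1$, a Chebyshev step on the $\beta$-tube of $\operatorname{supp}\lambda_S$, and a tubular-neighbourhood volume comparison to transfer the resulting set onto $S$. The only difference is cosmetic---you phrase the last step via the nearest-point projection and coarea, whereas the paper fattens the good set by $2\beta$, intersects with $S$, and bounds the Lebesgue measure of the $4\beta$-tube of that intersection by $\beta^{n}\sigma_S(G\cap S)$.
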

\begin{proof}
Since $\int_{\R^d}\psi =1$, by Fubini we have
\[
\int_{\R^d} \lambda_{S,x,\beta}(f) \, dx =\beta^n  \lambda_S(f).
\]
Let $G$ be the set of points $x_1$ in $\R^d$ for which there exists a point $x_0$ at distance smaller than $2\beta$ with $|\lambda_{S,x_0,\beta}(f)|>c\delta$.
Then, since the support of $g(x)=\lambda_{S,x,\beta}(f)$ is contained in a set of Lebesgue measure $O(\beta^n)$, we have
\[
\delta< |\lambda_S (f)|\ll \beta^{-n} m(G) +  \beta^{-n} c\delta \beta^n
\]
so for $c$ small enough we have
$
 m(G)\gg \delta \beta^n.
$
Now, $G\subset G'$, with $G'$ the set of points at distance at most $4\beta$ from $G\cap S$.  This is so because $\lambda_{S,x_0,\beta}(f)\neq 0$ implies that $x_0$ is at distance less than $ \beta$ from $S$. For the same reason $G$ is a union of balls of radius $ 2\beta$ and center at distance at most $\beta$ from $S$, so we can deduce that
\[
 m(G')\ll  \beta^n \sigma_S(G\cap S)
\]
which implies $\sigma_S(G\cap S)\gg \delta$ and the result follows.

\end{proof}

In what follows we will need to control the size of the set in which a real-analytic function is small. A particular case of Corollary 1 in \cite{garofalo_garrett} says that if $u$ is a real-analytic function in the ball {\color{black}$B_{1+2\delta_1}(0)\subset\R^k$ (with $\delta_1>0$ fixed) then there exist $p>1$ and $A>0$ such that}
\begin{equation}\label{garrofalo}
  \int_{B_{\delta_1/4}(y)} |u| \, dx  \, \, ( \int_{B_{\delta_1/4}(y)} |u|^{-1/(p-1)} \, dx)^{p-1} \le A
\end{equation}
for every $y\in B_1(0)$. From there we deduce

\begin{lemma}[Sublevel set estimate for analytic functions]
\label{sublevel_set_lemma}
Let $u$ be a real-analytic non-zero function on $B_{1+h}(0)\subset\R^k$ for some $h>0$. There exists $D=D(u)\ge 1$ such that for $0<\delta<1$ we have
 \[
  |\{ x\in B_1(0): |u(x)|<\delta\} |  \ll \delta^{1/D}.
 \]
\end{lemma}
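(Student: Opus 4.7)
The plan is to use the Garofalo--Garrett reverse H\"older inequality (\ref{garrofalo}) as a black box and convert it, via Chebyshev, into a sublevel set bound. Choose $\delta_1,\delta_2>0$ with $\delta_1+\delta_2<h$, so that $B_{1+\delta_1+\delta_2}(0)\subset B_{1+h}(0)$ and the hypothesis of \cite{garofalo_garrett} is satisfied, producing some $p=p(u)>1$ and $A=A(u)>0$ for which (\ref{garrofalo}) holds at every $y\in B_1(0)$.

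First I would cover $B_1(0)$ by finitely many balls $B_{\delta_1/4}(y_i)$, $i=1,\dots,N$, with centers $y_i\in B_1(0)$. Since $u$ is real-analytic and not identically zero on the connected open set $B_{1+h}(0)$, it vanishes only on a set of Lebesgue measure zero; hence every local integral $\int_{B_{\delta_1/4}(y_i)}|u|\,dx$ is strictly positive, and the finite minimum
\[
c(u):=\min_{i}\int_{B_{\delta_1/4}(y_i)}|u|\,dx
\]
is a positive constant depending only on $u$. Applying (\ref{garrofalo}) at each $y_i$ and dividing yields the uniform upper bound
\[
\int_{B_{\delta_1/4}(y_i)}|u|^{-1/(p-1)}\,dx\;\le\;\bigl(A/c(u)\bigr)^{1/(p-1)}=:M.
\]

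Next I would convert this into the sublevel set estimate by Chebyshev: on $E_i:=\{x\in B_{\delta_1/4}(y_i):|u(x)|<\delta\}$ we have $|u|^{-1/(p-1)}>\delta^{-1/(p-1)}$, so
\[
\delta^{-1/(p-1)}|E_i|\;\le\;\int_{B_{\delta_1/4}(y_i)}|u|^{-1/(p-1)}\,dx\;\le\;M,
\]
giving $|E_i|\le M\delta^{1/(p-1)}$. Summing over the finite cover,
\[
\bigl|\{x\in B_1(0):|u(x)|<\delta\}\bigr|\;\le\;\sum_{i=1}^{N}|E_i|\;\le\;NM\,\delta^{1/(p-1)},
\]
and the lemma follows with $D:=\max(1,p-1)$, so that $\delta^{1/(p-1)}\le\delta^{1/D}$ for $0<\delta<1$.

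There is no serious obstacle: the whole argument is a direct consequence of the quoted reverse H\"older inequality together with Markov's inequality. The only subtle point is justifying that the $L^{-1/(p-1)}$ norm of $u$ is finite on each ball of the cover, which follows from the strict positivity of $\int|u|$; this in turn uses that a nonzero real-analytic function on a connected open set vanishes on a null set, which is exactly where the real-analyticity hypothesis is used beyond the Garofalo--Garrett input.
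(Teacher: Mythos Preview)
Your proof is correct and follows essentially the same route as the paper's own proof: cover $B_1(0)$ by finitely many small balls, use the Garofalo--Garrett inequality (\ref{garrofalo}) together with the positivity of $\int|u|$ on each ball (from real-analyticity) to bound $\int|u|^{-1/(p-1)}$ uniformly, and then apply Chebyshev. The only cosmetic difference is that you make the Chebyshev step explicit and take $D=\max(1,p-1)$ to guarantee $D\ge1$, whereas the paper writes $D=p-1$.
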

\begin{proof}
We can cover $B_1(0)$ with a finite number of balls $B_j$ of radius $R=h/8$ centered inside $B_1(0)$. By applying (\ref{garrofalo}) with {\color{black}$\delta_1=h/2$} we get
\[
 \int_{B_j} |u| \, dx  \, \, ( \int_{B_j} |u|^{-1/(p-1)} \, dx)^{p-1} \le A
\]
for each of them. Since $u$ is non-zero, for every $j$ it cannot be zero for every point in $B_j$, and then $\int_{B_j} |u| \, dx\neq 0$ which implies
\[
 \int_{B_j}  |u|^{-1/(p-1)} \, dx \ll 1
\]
and then
\[
 \int_{B_1(0)} |u|^{-1/(p-1)} \, dx \ll 1
\]
from which the result follows with $D=p-1$.
\end{proof}

Finally we can prove our result in the singular case.

\begin{proposition}[Main result, singular case]
\label{main_singular}
Let $n\le d/4$.
For every $(T,\alpha)$-singular function $f$ we have
\[
 \lambda_S(f)\ll T^{-c}
\]
for some $c$ depending just on $\alpha$ and $S${\color{black}, and the implicit constant on the support of $\lambda_S$}.
\end{proposition}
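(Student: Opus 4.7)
The plan is to reduce $\lambda_S(f)$ to an integral of $\tilde f$ against a smooth density on $\R^k$, and then close by invoking the decay property built into the definition of a $(T,\alpha)$-singular function. Assume for contradiction that $|\lambda_S(f)|>T^{-c}$ for some small $c>0$; I will bound the left-hand side by $T^{-\alpha+O(c)}$, which contradicts this if $c$ is chosen sufficiently small in terms of $\alpha$. The first step is to apply Proposition~\ref{localization_in_space} at scale $\beta=T^{-\tau}$ with $\tau$ small to obtain a $\sigma_S$-positive-measure set of points $x_1\in S$ for which some $x_0$ at distance $O(\beta)$ satisfies $|\lambda_{S,x_0,\beta}(f)|\gg T^{-c}$.

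Because $n\le d/4$ and $k\le d/2$, Proposition~\ref{curved_implies_primitive} forces the primitive dimension of $S$ to exceed $m-n\ge d/2\ge k$ on a dense open subset, so the projection $\pi_k:S\to\R^k$ onto the $k$ coordinates on which $\tilde f$ depends is a submersion there. Its Jacobian $J_k$, computed in the parametrization $\varphi$, is therefore a nonzero real-analytic function, and Lemma~\ref{sublevel_set_lemma} bounds the measure of $\{J_k<T^{-c''}\}$ by $T^{-c''/D}$. Choosing $c''$ as a large enough constant multiple of $c$ (to shrink this bad set below the localization set), and then $\tau$ as a large enough constant multiple of $c''$ (so that $J_k$ varies by at most a factor $2$ across any $\beta$-ball), I can locate $x_1$ in the localization set with $J_k\gtrsim T^{-c''}$ uniformly on the $\beta$-ball about $x_1$. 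On that ball the inverse function theorem supplies a local reparametrization $\varphi(\xi,s)=(\xi,v(\xi,s))$ with $\xi\in\R^k$ playing the role of $\pi_k$ and $s\in\R^{m-k}$ the fiber variable, and integrating out $s$ yields
\[
\lambda_{S,x_0,\beta}(f)=\int_{\R^k}\tilde f(\xi)\,\Psi(\xi)\,d\xi
\]
for an explicit compactly supported density $\Psi$ on $\R^k$.

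To invoke the singular decay, I embed $\Psi$ into $\R^d$ via $\psi(x_1,x_2)=\Psi(x_1)\chi(x_2)$, with $\chi\in C_c^\infty(\R^{d-k})$ fixed so that $\int\tilde\rho\,\chi=1$ (possible because $\int\tilde\rho\ge 1$); then $\int_{\R^d}f\psi\,dx=\lambda_{S,x_0,\beta}(f)$ and the singular hypothesis yields $|\lambda_{S,x_0,\beta}(f)|\le T^{-\alpha}\|\psi\|_{S^{4d}}\ll T^{-\alpha}\|\Psi\|_{S^{4d}}$. The main obstacle is the quantitative bound $\|\Psi\|_{S^{4d}}\ll T^{O(c''+\tau)}$: the derivatives of $\Psi$ absorb those of the localizing bump (size $\beta^{-4d}=T^{O(\tau)}$) and, more delicately, those of the implicit function $v$, whose Sobolev norms are controlled via the quantitative inverse function theorem by inverse powers of $J_k$ and so contribute at most $T^{O(c'')}$. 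This last estimate is the technical heart of the argument and is exactly where the real-analyticity of $S$ enters essentially, through Lemma~\ref{sublevel_set_lemma}. Selecting $c,c'',\tau$ as sufficiently small positive constant multiples of $\alpha$ then closes the contradiction and yields the stated bound $\lambda_S(f)\ll T^{-c}$.
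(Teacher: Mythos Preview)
Your approach is essentially the paper's: localize via Proposition~\ref{localization_in_space}, invoke Proposition~\ref{curved_implies_primitive} to get that the projection to the first $k$ coordinates is a submersion on a dense open set, use Lemma~\ref{sublevel_set_lemma} to find a localization point where the relevant $k\times k$ Jacobian is at least $T^{-c''}$, straighten and integrate out the fibre variables, then feed the resulting test function back into the singular-function decay hypothesis. The parameter bookkeeping you sketch matches the paper's.

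One genuine gap: as written, your density $\Psi$ on $\R^k$ contains the factor $\tilde\rho\bigl(v(\xi,s)\bigr)$ coming from $f=\tilde f\otimes\tilde\rho$, and you then claim $\|\Psi\|_{S^{4d}}\ll T^{O(c''+\tau)}$. But the definition of a $(T,\alpha)$-singular function only gives $\|\tilde\rho\|_{S^1}\le 1$, not control of any higher derivatives, so that Sobolev bound fails. The fix (which is exactly what the paper does) is to freeze $\tilde\rho$ at the centre point before integrating: on the $\beta$-ball one has $\tilde\rho(\varphi_2(t))=\tilde\rho(\varphi_2(t_1))+O(\beta)$ by the $S^1$ bound, so the $\tilde\rho$ factor pulls out as a harmless constant and $\Psi$ is built only from the localizing bump, the density of $\lambda_S$, and the implicit function $v$ --- all of whose higher derivatives you do control. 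With that correction your argument goes through.
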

\begin{proof}
We begin by picking $\delta=T^{-\epsilon}$ with $\epsilon>0$ a small constant.  Let us suppose that
$
 |\lambda_S(f)|\gg \delta;
$
we shall see that we arrive at a contradiction. By applying Proposition \ref{localization_in_space} we have
$
 |\lambda_{S,x_0,\beta}(f)| \gg \delta 
$
for some {\color{black}$x_0=x_0(x_1)$} at distance {\color{black}at most $2\beta$} from $x_1$, for every $x_1$ in a subset $G$ of $S$ of $\sigma_S$-measure $\gg \delta$. Since the support of $\lambda_S$ is compact, we can further assume that this set {\color{black}(more precisely, a subset of $G$ of $\sigma_S$-measure $\gg \delta$)} is contained in {\color{black}$\varphi((-1/2,1/2)^m)$}, with $\varphi:(-1,1)^m\to S$ some parametrization of $S$, {\color{black}with $\varphi(t)=(t,w(t))$, $w$} a real analytic function on $(-2,2)^m$.

{\color{black}Since we are assuming $\beta<\frac 14$, $x_1=(t_1,w(t_1))$ with $t_1\in (-1/2,1/2)^m$, $x_0=x_0(x_1)$ is at distance at most $2\beta$ from $x_1$,  and $\psi$ is supported on the unit ball , we conclude that}  for every $x_1\in G$ the support of $\psi_{x_0,\beta}(t)=\beta^{-m}\psi((\varphi(t)-x_0)/\beta)$ is contained in $(-1,1)^m$, and then we have
\[
 \lambda_{S,x_0,\beta}(f)=\int_{\R^m} f(\varphi(t)) \psi_{x_0,\beta}(t) R(t) \, dt,
\]
for $R(t)$ some $C_c^{\infty}$ function depending just on $\lambda_S$. Since $f$ is singular, for some $k\le d/2$ we can write {\color{black}$f(x',x'')=\tilde f(x')\tilde \rho(x'')$ for $(x',x'')\in\R^k\times \R^{d-k}$}. We have $R(t)=R(t_1)+O(\beta)$ for any $t$ in the support of $\psi_{x_0,\beta}$, with $t_1=\varphi^{-1}(x_1)$, and the same happens for $\tilde\rho(\varphi)$, so
\[
 \tilde\rho(x_1) \int_{\R^m} \tilde f(\varphi_1(t))\psi_{x_0,\beta}(t)\, dt\gg \delta,
\]
with $\varphi(t)=(\varphi_1(t),\varphi_2(t))\in\R^k\times\R^{d-k}$. 
On the other hand, since $d/2\le m-n$ and $S$ is totally curved by Proposition \ref{curved_implies_primitive} we have that its primitive dimension is larger than $k$ throughout some open subset $U$ of $\varphi((-1,1)^m)$.
Thus $\varphi_1(U)$ is a manifold of dimension $k$, which implies that  for some variables $t_{i_1},\ldots, t_{i_k}$, $i_j\le m$, the Jacobian $J(t)$ of $\varphi_1$ with respect to them is a nonzero real-analytic function on $(-2,2)^m$.  Assume they are the first $k$ variables. Let us write $t=(t',t'')$, $t'\in \R^k$, with $J=\det \partial\varphi_1/\partial t'$. 
By Lemma \ref{sublevel_set_lemma} there exists $D>1$ such that
\[
 |\{t\in(-1,1)^m: |J(t)|<\gamma^D\delta^D\}|\ll 
 \gamma\delta
\]
for any small constant $0<\gamma<1$. Since $\sigma_S(G)\gg \delta$ this means that for some $x_1\in G$ we have $|J(t_1)|\gg \delta^D$. Then, for $\beta\ll \delta^D$ we have $J(t)=J(t_1)+O(\beta)\neq 0$, so by the change of variables $(t',t'')\mapsto (\varphi_1,t'')$ we obtain that
 \[
  \tilde\rho(x_1)  \int_{\R^{m-k}} \int_{\R^k} \tilde f(\varphi_1) \psi_{\beta,x_0}(t) \,\frac{d\varphi_1}{|J(t)|} \, dt''\gg \delta
 \]
with $t=t(\varphi_1,t'')$ its inverse.  We have $J(t)=J(t_1) (1+O(\beta/\delta^D))$ in the support of $\psi_{\beta,x_0}$, so by picking $\beta= \delta^{2D}$ we can write
\[
 \tilde \rho(x_1) \int_{\R^k} \tilde f(\varphi_1) \tilde\psi_{\beta,x_0}(\varphi_1) \, d\varphi_1 \gg \delta^{D+1}
\]
with
\[
 \tilde\psi_{\beta,x_0}(\varphi_1)=\int_{\R^{m-k}} \psi_{\beta,x_0}(t) dt''.
\]
Since $\|\tilde\rho\|_{S^1}\le 1 \le  \int_{\R^{d-k}} \tilde \rho$, it follows that
\[
 \int_{\R^d} f(x) \eta(x) \,dx \gg \delta^{D+1}
\]
with $\eta(x_1,x_2)=\tilde\psi_{\beta,x_0}(x_1)$. Using implicit differentiation we can obtain the bound $\|\eta\|_{S^l}\ll \beta^{-2l}$ for every $l\ge 0$,  and since $f$ is {\color{black}$(T,\alpha)$-singular}  we have
\[
 T^{-\alpha} \beta^{-8d}\gg \delta^{D+1}
\]
which gives a contradiction for any $\epsilon<\alpha/18dD$, with $\delta=T^{-\epsilon}$, since $\beta=\delta^{2D}$ and $D>1$.

\end{proof}

\section{Mixing case: Low and high frequencies}

In the next sections we shall show that $\lambda_S(f)$ is small for $f$ a mixing function. Taking into account  our treatment of singular functions in the previous section, this will complete our proof of Theorem \ref{main}.

We will handle this case by using Fourier Analysis in $\R^d$. We begin by recalling the Fourier transform of the measure $\mu=\lambda_{S,x_0,\beta}$:
\[
 \widehat \mu (\xi)=\mu(e(-\xi\cdot))=\int_{\R^d} e(-\xi t) \, d\mu(t)   \qquad \xi \in \R^d.
\]
In this context we have Plancherel Theorem \cite[Theorem 7.1.14]{hormander},
\begin{equation}\label{plancherel}
 \int_{\R^d} f(x) \, d\mu(x) =  \int_{\R^d} \widehat f(\xi) \overline{\widehat\mu(\xi) }\, d\xi  \qquad  f\in C_c^{\infty}(\R^d)
\end{equation}
which can be seen as coming from Fourier expansion of $f$ plus linearity of $\mu$. Since $f$ will have a derivative of size $T$, it is convenient to split the frequencies into the following ranges
\begin{equation}\label{frequency_splitting}
 \mu=\mu^l+\mu^m+\mu^h
\end{equation}
where
$
 \mu^*(f)=\int_{\R^d} \widehat f(\xi) \overline{\widehat \mu(\xi)} \eta_*(\xi)\, d\xi
$,
for $*=l, m, h$ and
\[
 \eta_l(r)=\eta(\frac{r}{\rho T})   \qquad  \eta_m(r)=\eta(\frac{r}{T/\rho})-\eta(\frac{r}{\rho T})   \qquad   \eta_h(r)= 1-\eta(\frac{r}{T/\rho}),
\]
with $\rho\in (0,1)$ and  $\eta$ a fixed function in $C_c^{\infty}(\R^d)$ with $\eta(r)=1$ for any $|r|<1$ and $\eta(r)=0$ for any $|r|>2$. So $\mu^l$, $\mu^m$ and $\mu^h$ take care of the low, midrange and high frequencies respectively.

To handle $\lambda_{S,x_0,\beta}^l$ we are going to use the decay in average of $\widehat\lambda_{S,x_0,\beta}$, which is well known for the Fourier transform of a submanifold. This is a particular case of Theorem 7.1.26 in \cite{hormander}:
\begin{lemma}[$L^2$ decay of Fourier Transform of submanifold]
\label{fourier_l2_decay}
 For any $\beta<1$ and $K\ge 1$ we have
 \[
 \|1_{[1,2]}(|\cdot|)\widehat\lambda_{S,x_0,\beta}(K\cdot)\|_{L^2(\R^d)} \ll (\beta K)^{-m/2} .
\]
\end{lemma}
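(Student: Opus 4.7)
The plan is to reduce the estimate to a bilinear expansion of the $L^2$ norm squared using a local parametrization of $S$; this is essentially the standard argument behind H\"ormander's theorem. Since the Radon-Nikodym derivative of $\lambda_S$ with respect to the volume measure on $S$ is compactly supported, I cover its support by finitely many coordinate charts and sum, so it is enough to work with a single chart $\varphi:V\subset\R^m\to\R^d$ parametrizing a piece of $S$. Pulling back the measure $\lambda_{S,x_0,\beta}$ through $\varphi$ gives a smooth density $g(t)$ on $\R^m$ supported inside a Euclidean ball of radius $O(\beta)$ and satisfying $\|g\|_{L^\infty}\ll \beta^{-m}$, with total mass $O(1)$.

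Next I will pick a smooth majorant $\phi(\eta)=\chi(|\eta|/K)$ of $1_{[K,2K]}(|\cdot|)$ with $\chi\in C_c^\infty([1/2,4])$ and expand
\[
 \int_{\R^d}|\widehat\lambda_{S,x_0,\beta}(\eta)|^2\phi(\eta)\,d\eta = K^d\iint g(t)\overline{g(s)}\,\widehat{\chi_*}(K(\varphi(t)-\varphi(s)))\,dt\,ds,
\]
where $\chi_*(\eta)=\chi(|\eta|)$. The rapid (Schwartz) decay of $\widehat{\chi_*}$ together with the immersion bound $|\varphi(t)-\varphi(s)|\gg|t-s|$ (which holds inside each chart because $\varphi$ has rank $m$ on a fixed compact set) yields $\widehat{\chi_*}(K(\varphi(t)-\varphi(s)))\ll(1+K|t-s|)^{-N}$ for any $N\ge 0$.

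Choosing $N>m$, integrating in $s$ over $\R^m$ produces a factor of $K^{-m}$, integrating in $t$ over the support of $g$ adds a factor of $\beta^m$, and the $L^\infty$ bounds on $g$ contribute $\beta^{-2m}$; altogether the bilinear integral is $\ll K^{d-m}\beta^{-m}$. Rescaling $\eta=K\xi$ then yields
\[
 \|1_{[1,2]}(|\cdot|)\widehat\lambda_{S,x_0,\beta}(K\cdot)\|_{L^2(\R^d)}^2 = K^{-d}\int_{K\le|\eta|\le 2K}|\widehat\lambda_{S,x_0,\beta}(\eta)|^2\,d\eta \ll K^{-d}\cdot K^{d-m}\beta^{-m}=(\beta K)^{-m},
\]
as required. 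No curvature hypothesis on $S$ enters; only the fact that $\varphi$ is a smooth immersion of an $m$-dimensional parameter space is used. The regime $\beta K<1$ is subsumed by this bound, but can alternatively be dispatched via the trivial estimate $|\widehat\lambda_{S,x_0,\beta}|\ll 1$. The only technical point requiring genuine attention is tracking the $\beta^{-m}$ factor arising from the concentration of $\lambda_{S,x_0,\beta}$ on a ball of radius $\beta$; the rest of the argument is a routine localization/immersion computation.
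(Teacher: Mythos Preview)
Your argument is correct. The paper does not give a proof of this lemma at all: it simply quotes it as a particular case of Theorem~7.1.26 in H\"ormander's book. What you have written is essentially the standard proof of that theorem specialized to the present setting---expand the $L^2$ norm bilinearly, use that the inverse Fourier transform of a smooth radial cutoff at scale $K$ is Schwartz and concentrated at scale $K^{-1}$, and exploit the immersion lower bound $|\varphi(t)-\varphi(s)|\gg|t-s|$ on a fixed compact piece of the chart. Your bookkeeping of the $\beta$-dependence is accurate: the density $g$ has $\|g\|_{L^\infty}\ll\beta^{-m}$ and support of measure $O(\beta^m)$, which together with $\int(1+K|t-s|)^{-N}\,ds\ll K^{-m}$ yields exactly $K^{d-m}\beta^{-m}$ for the weighted $L^2$ mass at scale $K$, hence $(\beta K)^{-m/2}$ after rescaling. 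A slightly cleaner variant is to bound only one factor of $g$ in $L^\infty$ and use $\int|g|\ll 1$ (the total mass bound stated in the paper just before the lemma) for the other; this avoids having to track the support volume separately, but the outcome is identical.

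So: the paper defers to a reference, and you have supplied a short self-contained proof along the lines of that reference. Nothing is missing.
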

%

Now the main result of this section is
\begin{proposition}[Mixing case, high and low frequencies control]
\label{main_low_high}
Let $f$ be a $(T,\alpha)$-mixing function. Then we have
\[
 |\lambda_{S,x_0,\beta}^l(f)|+|\lambda_{S,x_0,\beta}^h(f)|\ll (\rho^{2\alpha}/\beta^2)^{d/4} T^{n} + \rho
\]
for any $\rho>T^{-\alpha/2}$, where $\rho$ comes from the splitting (\ref{frequency_splitting}).
\end{proposition}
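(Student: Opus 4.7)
The plan is to estimate $\mu^l(f)$ and $\mu^h(f)$ separately (writing $\mu=\lambda_{S,x_0,\beta}$), using Plancherel throughout, and invoking the mixing hypothesis (\ref{horosphere_condition}) only for the low-frequency piece; for the high-frequency piece the Lipschitz bound $\|f\|_{S^1}\le T$ alone suffices. Parseval gives
\[
 \mu^h(f)=\int_{\R^d}(f*\check\eta_h)(y)\,d\mu(y),
\]
where $\check\eta_h$ is $\delta_0$ minus an approximate identity at scale $\rho/T$ (since $\eta_h=1-\eta(\cdot/(T/\rho))$). Because $\|\nabla f\|_\infty\le T$, the convolution $f*\check\eta_h$---namely $f$ minus its smoothing at that scale---is $O(\rho)$ pointwise, so $|\mu^h(f)|\ll\rho$.

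For the low-frequency part, Cauchy--Schwarz on Plancherel yields
\[
 |\mu^l(f)|^2\le\Bigl(\int|\widehat f|^2\eta_l\,d\xi\Bigr)\Bigl(\int|\widehat\mu|^2\eta_l\,d\xi\Bigr).
\]
Dyadic decomposition against Lemma \ref{fourier_l2_decay} handles the second factor: each shell $K\le|\xi|\le 2K$ gives $\int|\widehat\mu|^2\,d\xi\ll K^n/\beta^m$, and summing geometrically over $K\lesssim\rho T$ gives $\int|\widehat\mu|^2\eta_l\,d\xi\ll(\rho T)^n/\beta^m$. A second application of Parseval converts the first factor into an autocorrelation integral,
\[
 \int|\widehat f|^2\eta_l\,d\xi=\int A(z)\check\eta_l(z)\,dz,\qquad A(z):=\int f(y+z)\overline{f(y)}\,dy,
\]
to which I apply (\ref{horosphere_condition}) with $\psi$ a smooth bump equal to $1$ on $\supp f$ and $V=1/\rho$---valid since $\rho>T^{-\alpha/2}$ forces $V<T^\alpha$---obtaining $\int_{|z|<1/(\rho T)}|A(z)|\,dz\ll T^{-d}\rho^{-d/2}$. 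Combined with the peak bound $\|\check\eta_l\|_\infty\ll(\rho T)^d$ on the near region, and the Schwartz decay $|\check\eta_l(z)|\ll(\rho T)^d(1+\rho T|z|)^{-N}$ iterated with (\ref{horosphere_condition}) on dyadic annuli in the far region, this yields $\int|\widehat f|^2\eta_l\,d\xi\ll\rho^{d/2}$. Therefore $|\mu^l(f)|\ll\rho^{(d+2n)/4}T^{n/2}/\beta^{m/2}$, which is bounded by $(\rho/\beta^2)^{d/4}T^n$ because $\rho,\beta\le 1\le T$ makes the ratio $(T/(\rho\beta))^{n/2}\ge 1$.

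The main obstacle is the autocorrelation bound. In the near regime $|z|\lesssim 1/(\rho T)$ the kernel $\check\eta_l$ is essentially constant at its peak value $(\rho T)^d$, so all useful cancellation must come from the mixing hypothesis; the assumption $\rho>T^{-\alpha/2}$ is precisely what makes the optimal choice $V=1/\rho$ fit inside the mixing range $V<T^\alpha$. The rest of the argument---decomposing the far region dyadically and combining pointwise Schwartz tails with the mixing estimate at each scale---is routine.
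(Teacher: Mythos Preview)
Your proof is correct and follows essentially the same approach as the paper: the high-frequency bound via the Lipschitz estimate $\|f\|_{S^1}\le T$ is identical, and for the low-frequency piece both you and the paper apply Cauchy--Schwarz against Lemma~\ref{fourier_l2_decay} and then reduce $\int|\widehat f|^2\eta_l$ (equivalently $\|f*\check\eta_{\rho T}\|_{L^2}^2$) to an autocorrelation integral controlled by~(\ref{horosphere_condition}). The only cosmetic difference is that the paper handles the tail of the autocorrelation kernel with a single cutoff parameter $\epsilon$ (applying mixing inside, the trivial bound plus Schwartz decay outside), whereas you iterate the mixing estimate over dyadic annuli; note that for the outermost annuli where $V_k=2^{k+1}/\rho$ exceeds $T^\alpha$ you must fall back on the trivial bound $|A|\ll 1$, but the Schwartz tail is strong enough there that this causes no loss.
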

\begin{proof}
We can write
\[
 \lambda_{S,x_0,\beta}^h (f)= \lambda_{S,x_0,\beta}(f)-\int_{\R^d} \widehat{f}(\xi) \eta_{T/\rho}(\xi) \overline{\widehat\lambda_{S,x_0,\beta}(\xi)}\, d\xi
\]
with $\eta_{U}(r)=\eta(r/U)$. We can write
\[
 \widehat f(\xi)\eta_{T/\rho}(\xi)= \widehat f(\xi)\widehat{ \check{\eta}}_{T/\rho}(\xi) =\widehat{f*\check\eta_{T/\rho}}(\xi)
\]
where $*$ indicates the convolution and $\check\eta_U(x)$ is the inverse Fourier Transform of $\eta_U$. By (\ref{plancherel}) we have
 \[
  \lambda_{S,x_0,\beta}^h (f)= \lambda_{S,x_0,\beta}(f-f*\check\eta_{T/\rho}).
 \]
By definition $1=\eta_U(0)=\int_{\R^d} \check\eta_U(y)\, dy$ and then
\[
 f(x)-f*\check\eta_U(x)=\int_{\R^d} [f(x)-f(x-y)] \check\eta_U(y) \, dy.
\]
Since $|f(x)-f(x-y)|\le \|f\|_{S^1} |y|\le T|y|$ and  $\check\eta_U(x)=U^d \check\eta(Ux)$, by a change of variables we have
\[
| f(x)-f*\check\eta_U(x)| \le \frac TU \int_{\R^d} |y| |\check\eta(y)| \, dy \ll T/U
\]
hence
$
 \lambda_{S,x_0,\beta}^h(f)\ll \rho.
$
On the other hand, since $\eta$ is supported on the ball of radius 2 we have
\[
 \lambda_{S,x_0,\beta}^l(f)=\int_{\R^d} \widehat f(\xi)\eta_{\rho T}(\xi) \overline{\widehat\lambda_{S,x_0,\beta}(\xi)} 1_{[0,2\rho T]}(|\xi|) \, d\xi
\]
and then reasoning as before and using Cauchy-Schwarz we have
\[
 |\lambda_{S,x_0,\beta}^l(f)|\le \| \widehat{f*\check\eta_{\rho T}}\|_{L^ 2} \|\widehat\lambda_{S,x_0,\beta} 1_{[0,2\rho T]}(|\cdot|)\|_{L^2}
\]
so by Lemma \ref{fourier_l2_decay} and Plancherel we have
\[
 |\lambda_{S,x_0,\beta}^l(f)|\ll \beta^{-m/2} (\rho T)^{n/2} \|f*\check\eta_{\rho T}\|_{L^2}.
\]
Moreover
\[
 \|f*\check\eta_{\rho T}\|_{L^2}^2=\int_{\R^d\times \R^d}   \check\eta_{\rho T}(y)\overline{\check\eta}_{\rho T}(y') \int_{\R^d} f(x-y) \overline f(x-y')\, dx    \, dy\, dy'
\]
so by using the properties of the Fourier Transform and changing variables we have
\[
 \|f*\check\eta_{\rho T}\|_{L^2}^2=\int_{\R^d} w(y) \int_{\R^d} f(x-\frac{y}{T}) \overline f(x) \, dx \, dy
\]
with
\[
 w(y)=\rho^d \int_{\R^d} \check\eta(\rho y+y') \overline{\check\eta}(y')\, dy'.
\]
By the decay of the Fourier transform we have the inequality $w(y)\ll \rho^d \min(1,|\rho y|^{-1/|o(1)|})$ so that applying (\ref{horosphere_condition}) for $|y|<\rho^{-1}\epsilon^{-1}$ and the bound  $\|f\|_{S^0}\le 1$ and the compact support of $f$ we have
\[
 \|f*\check\eta_{\rho T}\|_{L^2}^2\ll \rho^d (\rho^{-1}\epsilon^{-1})^{d(1-\alpha)}+ \epsilon^{1/|o(1)|}\ll \epsilon^{-d}\rho^{\alpha d}+\epsilon^{1/o(1)},
\]
and choosing $\epsilon=\rho^{n/d}$ we get that $\lambda_{S,x_0,\beta}^l(f)\ll \beta^{-m/2} \rho^{\alpha d/2} T^{n/2}$.

\end{proof}

\section{ A Sublevel set estimate}

In order to control the midrange frequencies in the decomposition (\ref{frequency_splitting}), we will use both the shape and the decay of the Fourier transform of $\lambda_{S,x_0,\beta}$. This Fourier transform can behave badly if $S$ is not totally curved at $x_0$, or if $x_0$ is near to such a point.
In order to distinguish the points for which the Fourier transform behaves nicely we are going to quantify the concept of being totally curved.

Let $p\in S$ and fix  a real-analytic parametrization $\varphi=\varphi_p:U\to S$ in $S$ with $\varphi^{-1}(p)\in U\subset \R^m$.  For every $a\in (T_pS)^{\perp}$ we consider $H_a$, the Hessian of $a \varphi$. $H_a$ has $m$ eigenvalues (counted with multiplicities), and if we order their absolute values we get a sequence
$
 0\le \beta_1\le \beta_2\le \ldots \le \beta_m.
$
We define the functions $e_{a,\varphi}(p)=\beta_n$  and
\begin{equation}\label{eigenvalue_n_hessian}
 e_S(p)=\inf\{e_{a,\varphi}(p): a\in (T_pS)^{\perp}, |a|=1\}.
 \end{equation}
With them we give the following definition.
\begin{definition}[$\delta$-curved point]
\label{curved points}
Let $0<\delta<1$. We say that $p\in S$ is a \emph{$\delta$-curved point} if $e_S(p)>\delta$.
\end{definition}
\begin{remark}
The fact that $p$ is a $\delta$-curved point actually depends on the chosen parametrization $\varphi_p$. Afterwards this will not cause any problem since by compactness we will just need a finite number of parametrizations to cover the support of $\lambda_S$.
\end{remark}

By Proposition \ref{totally_curved_char}, we have that $S$ is not totally curved at $p$ if and only if $p$ is a $0$-curved point. We shall be able to control $\lambda_{S,x_0,\beta}^{m}$ for $x_0$ a $\delta$-curved point for a suitable $\delta$. Thus, we need to show that the set of points that are not $\delta$-curved is negligible, namely
\[
\sigma_S(\{p\in \supp \lambda_S: e_S(p)<\delta\})<\delta^{1/c}  
\]
for some $c>0$.
This is a sublevel set estimate for the function $e_S$; this kind of estimates play an important role in some problems of Fourier Analysis \cite{carbery}. Our case is quite peculiar due to the fact that \emph{every} $a\in (T_p S)^{\perp}$ is involved in the definition of $e_S(p)$.
 In order to prove such an estimate we will use that $S$ is totally curved and the \emph{Real Nullstellensatz}, that characterizes when several polynomials have the same real root:

Let $lst=(n,d_1,d_2,\ldots, d_n)$ be a list of positive integers; define $m_i$ to be $\binom{n+d_i-1}{n-1}$ and
\[
 m''=m_1+\ldots +m_n.
\]
We consider $f_1(c,z), \ldots , f_n(c,z)$ the generic homogeneous polynomials in the variables $z=(z_1,\ldots ,z_n)$ of degrees $d_1,\ldots ,d_n$, with $c$ the generic coefficients, $c\in \R^{m''}$. We define the set $W_{lst}(\R)$ of $c\in \R^{m''}$ such that the system of equations
\begin{equation}\label{equ6}
 f_1(c,z)=f_2(c,z)=\ldots =f_n(c,z)=0
\end{equation}
has no solution  $z\in\R^n$, $z\neq 0$. We have that $W_{lst}(\R)$ is a semialgebraic set and there exists \cite[Theorem C]{gonzalez_lombardi} an algebraic identity
\begin{equation}\label{positivestellensatz}
 p_1(c) |z|^{2s}+\sum_{j\ge 2} p_j(c) a_j(c,z)^2 + \sum_{i=1}^n f_i(c,z) b_i(c,z)^2 = 0,
\end{equation}
with $s$ a positive integer, $p_j$ semipolynomial functions, and $a_j, b_i$ polynomials in $z$ with coefficients semipolynomial functions, such that $p_1>0$ and $p_j\ge 0$  on $W_{lst}(\R)$ for every $j\ge 2$ (a semipolynomial function is a function built using polynomials and the absolute value several times). 

So $p_1$ is a kind of real resultant, because if there is no non-zero solution for the system (\ref{equ6}) then $p_1>0$.

\begin{proposition}[Non $\delta$-curved points are negligible]
\label{sublevel}
Let $S$ be a totally curved submanifold of $\R^d$ of dimension $m$. Then there exists $c_S>1$ such that
the $\sigma_S$ measure of set of points $p$ in the support of $\lambda_S$ which are not $\delta$-curved is $O_{\lambda_S}(\delta^{1/c_S})$.
\end{proposition}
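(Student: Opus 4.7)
The plan is to combine the Positivstellensatz identity \eqref{positivestellensatz} with the sublevel set estimate for real-analytic functions (Lemma \ref{sublevel_set_lemma}). By compactness of $\supp \lambda_S$ and by Proposition \ref{totally_curved_char}(v), it suffices to cover the support by finitely many real-analytic charts $\varphi:(-2,2)^m \to S$ of the form $\varphi(t)=(t,w(t))$ (after reordering coordinates for each chart), and to bound the $\sigma_S$-measure on $\varphi((-1,1)^m)$. In such a chart the characteristic polynomial of the Hessian $H_z$ of $z\cdot w(t)$ reads $\lambda^m + s_{m-1}(z;t)\lambda^{m-1}+\ldots+s_0(z;t)$, each $s_j(z;t)$ being homogeneous of degree $m-j$ in $z\in\R^n$ with coefficients that are real-analytic in $t$. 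Let $c(t)\in\R^{m''}$ collect all these coefficients, so that the generic polynomials $f_i(c,z)$ of \cite{gonzalez_lombardi} specialize to $s_{i-1}(z;t)$ when $c=c(t)$.

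The quantitative crux is: if $e_S(\varphi(t))\le\delta$, then there is a unit vector $a\in\R^n$ such that $H_a(t)$ has at least $n$ eigenvalues of absolute value $\le\delta$; writing each $s_j(a;t)$ as an elementary symmetric polynomial in these eigenvalues and using that every eigenvalue is uniformly bounded on the compact chart, we obtain $|s_j(a;t)|\ll \delta^{n-j}$ for $j=0,\ldots,n-1$, in particular $\max_j|s_j(a;t)|\ll\delta$. Evaluating the identity \eqref{positivestellensatz} at $(c,z)=(c(t),a)$ with $|a|=1$, and using the uniform boundedness of the polynomials $a_j(c,z)$ and $b_i(c,z)$ on compact sets, we get, for $t$ such that $c(t)\in W_{lst}(\R)$ (equivalently, $\varphi(t)$ totally curved),
\[
0<p_1(c(t))\le p_1(c(t))+\sum_{j\ge 2}p_j(c(t))\,a_j(c(t),a)^2=-\sum_{i=0}^{n-1}s_i(a;t)\,b_i(c(t),a)^2\ll\delta,
\]
where we used $p_1>0$ and $p_j\ge 0$ on $W_{lst}(\R)$. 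Hence $\{t:e_S(\varphi(t))\le\delta\}\subset \{t:c(t)\notin W_{lst}(\R)\}\cup\{t:|p_1(c(t))|\ll\delta\}$. The first subset is closed, subanalytic, and by hypothesis (density of $\{c(t)\in W_{lst}(\R)\}$, coming from $S$ being totally curved) nowhere dense in the chart, hence of dimension strictly less than $m$ and so of $\sigma_S$-measure zero.

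It remains to bound the second subset via Lemma \ref{sublevel_set_lemma}. Since $p_1$ is semipolynomial, we partition $\R^{m''}$ into finitely many sign components of the underlying polynomials; on each such component $p_1$ agrees with a genuine polynomial. Pulling back by the analytic map $c\circ\varphi$ partitions $(-1,1)^m$ into finitely many subanalytic pieces $V_k$, on each of which $F_k(t):=p_1(c(\varphi(t)))$ is real-analytic. Pieces whose sign component is disjoint from $W_{lst}(\R)$ lie inside $\{c(t)\notin W_{lst}(\R)\}$ and so contribute nothing; on each remaining piece, since $p_1>0$ on its intersection with $W_{lst}(\R)$, the analytic function $F_k$ is not identically zero, and Lemma \ref{sublevel_set_lemma} (after covering $V_k$ by a finite number of suitable balls) yields $\sigma_S\{t\in V_k:|F_k(t)|\ll\delta\}\ll\delta^{1/D_k}$. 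Summing over the finitely many pieces and charts gives the proposition with $c_S=\max_k D_k$. The principal obstacle is reconciling the semialgebraic Positivstellensatz (which only guarantees positivity of $p_1$ on $W_{lst}(\R)$) with the smooth sublevel set estimate (which requires an honest non-vanishing real-analytic function); the stratification into sign components, coupled with the density of the totally curved locus ensuring $F_k\not\equiv 0$ on the pieces meeting $W_{lst}(\R)$, is precisely what resolves this tension.
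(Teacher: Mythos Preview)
Your proof is correct and follows essentially the same route as the paper: reduce to a chart $\varphi(t)=(t,w(t))$, feed the coefficients $s_j$ of the characteristic polynomial of $H_z$ into the Positivstellensatz identity \eqref{positivestellensatz}, show that the non-totally-curved locus $\{c(t)\notin W_{lst}(\R)\}$ has measure zero, and then control the sublevel set of the semipolynomial $p_1$ by stratifying into finitely many pieces on which it is a genuine (non-vanishing) real-analytic function so that Lemma~\ref{sublevel_set_lemma} applies. The only cosmetic differences are that you run the quantitative implication in the contrapositive direction (small $e_S\Rightarrow$ small $p_1$, whereas the paper argues large $p_1\Rightarrow$ large $e_S^*$) and that you invoke general subanalytic-set facts for the measure-zero step where the paper unwinds the semialgebraic decomposition explicitly; both are harmless.
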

\begin{proof}
By compactness of $\supp \lambda_S$ it is enough to assume that $S=\varphi((-1,1)^m)$ with $\varphi(t)=(t,w(t))$ and $w:(-2,2)^m\to \R^n$ a real-analytic function and that $\varphi$ is the parametrization chosen to define $e_S(p)$ for every $p$. Moreover we can substitute $e_S(p)$ by 
\[
 e_S^*(p)=\inf\{e_{(0,z),\varphi}(p): z\in\R^n, |z|=1\}
\]
The characteristic polynomial of the matrix $H_{(0,z)}(t)$ can be written as
\[
 \lambda^m+s_{m-1}\lambda^{m-1}+ \ldots + s_1 \lambda +s_0.
\]
Now, $s_j$ is a homogeneous polynomial of degree $m-j$ in $z$, and then we can write it as
\[
 s_{j-1}(t,z)=f_j(\varrho(t),z)     \qquad 1\le j\le m
\]
with $f_1(c,z), \ldots, f_n(c,z)$ the generic homogeneous polynomials of degrees $m, m-1,\ldots, m-n+1,$ each component of $\varrho$ being a polynomial in $ \partial_i\partial_j w(t)$, so a real-analytic function. By (\ref{positivestellensatz}), for every $t\in (-1,1)^m$ we have
\begin{equation}\label{resultant_equation}
 p_1(\varrho(t)) |z|^{2s}+\sum_{j\ge 2} p_j(\varrho(t)) a_j(\varrho(t),z)^2 + \sum_{i=1}^n s_{i-1}(t,z) b_i(\varrho(t),z)^2 = 0.
\end{equation}
We can write
\[
 (-1,1)^m=\varrho^{-1}(W)\cup \varrho^{-1}(W^c),
\]
with $W=W_{lst}(\R)$ for $lst=(n,m,m-1,\ldots,m-n+1)$.
If $t\in \varrho^{-1}(W^c)$ then the dimension of $\ker H_z$ is at least $n$.  Hence, since $S$ is totally curved, Proposition \ref{totally_curved_char} implies that there cannot exist an open set $U\subset \varrho^{-1}(W^c)$. But writing $\varrho(v)=\Phi(D^2 w(v))$, with $\Phi$ a polynomial, we have that $\Phi^{-1}(W)$ and $\Phi^{-1}(W^c)$ are semialgebraic sets and
\[
 \varrho^{-1}(W^c)= \{t\in (-1,1)^m : D^2 w\in \Phi^{-1}(W^c)\}=
 \]
\[ 
 =\bigcup_j \{t: q_j(D^2 w(t))>0\} \cup \bigcup_k \{t: r_k(D^2 w(t))=0\}
\]
with $q_j$ and $r_k$ polynomials, and since the set concerning the $q_j$ is open it must be void, and then
\[
  \varrho^{-1}(W^c)=\bigcup_k \{t: r_k(D^2 w(t))=0\},
\]
with $r_k(D^2 w)$ non-zero as a real-analytic function. Thus $\varrho^{-1}(W^c)$ is a set of measure zero. On the other hand, since $p_1$ is a semipolynomial and $p_1>0$ on $\varrho^{-1}(W)$, we can split $\varrho^{-1}(W)$ into a disjoint and finite union of sets $V_j$, and on each of them we have $p_j(\Phi(D^2 w(t)))=h_j(D^2 w(t))$, with $h_j$ a polynomial. Clearly $h_j(D^2 w)$ can be extended to a real analytic non-zero function on $(-2,2)^m$ and we have
\[
m \{t\in(-1,1)^m: p_1(\varrho(t))<\delta \} \le \sum_j m\{t\in(-1,1)^m: |h_j(D^2 w)(t)|<\delta\}.
\]
Then, by applying Lemma \ref{sublevel_set_lemma} to the functions $h_j(D^2 w)$ we have
\begin{equation}\label{equ5}
 m(\{t: p_1(\varrho(t))<\delta \})<\delta^{1/c}.
\end{equation}
Let us consider a fixed $t\in \varrho^{-1}(W)$ with $p_1(\varrho(t))\ge \delta$.  Since $p_j(\rho(t))\ge 0$ for every $j\le m$, by (\ref{resultant_equation}) for every $|z|=1$ we have 
\[
 -s_{i-1}(t,z)\gg \delta
\]
for some $i\le n$. But by the definition of $s_{i-1}(t,z)$ as a symmetric polynomial in terms of the roots of the characteristic polynomial, we get that
\[
| \lambda_1\lambda_2\ldots \lambda_{m-(i-1)}| \gg \delta 
\]
for some $\lambda_j$, $j\le m-(i-1)$ eigenvalues of $H_{(0,z)}$. But, since all eigenvalues are $O(1)$ we get that
\[
 |\lambda_j|\gg \delta
\]
for every $j\le m-(i-1)$. Thus if $t\in\varrho^{-1}(W)$ with $p_1(\varrho(t))\ge \delta$ then $e_S^*(\varphi(t))\gg \delta$,  so the result follows from (\ref{equ5}).
\end{proof}

This result implies that it is enough to control $\lambda_{S,x_0,\beta}(f)$ for $x_0$ near curved points in order to control $\lambda_S(f)$.

\begin{proposition}[Bound at curved points implies main result]
\label{local_to_global}
Let $f:\R^d\to\R$ with $\|f\|_{L^{\infty}}\le 1$.
If $\lambda_{S,x_0,\beta}(f)\ll \epsilon$ for every point $x_0$ at distance {\color{black} at most $2\beta$} from some $\gamma$-curved point of $S$, then
\[
 \lambda_S(f)\ll \epsilon+\beta+\gamma^{1/c_S},
\]
with $c_S>0$ a constant depending on $S$, {\color{black} and the implicit constant depending on the support of $\lambda_S$.}
\end{proposition}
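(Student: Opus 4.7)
My plan is to express $\lambda_S(f)$ as an average of the local measures $\lambda_{S,x_0,\beta}$ and then split this average according to whether $x_0$ is near a $\gamma$-curved point of $S$. From Fubini and $\int_{\R^d}\psi=1$, a change of variables with Jacobian $\beta^{d-m}=\beta^n$ gives
\[
\int_{\R^d}\lambda_{S,x_0,\beta}(f)\,dx_0=\beta^n\,\lambda_S(f),
\]
so that $\lambda_S(f)=\beta^{-n}\int_{N_\beta}\lambda_{S,x_0,\beta}(f)\,dx_0$, where $N_\beta$ is the $\beta$-neighbourhood of $\supp \lambda_S$, the only region in which the integrand is nonzero.

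Next I would split $N_\beta=G\cup(N_\beta\setminus G)$, with $G$ the set of $x_0$ within $O(\beta)$ of some $\gamma$-curved point of $S$ (taking the implicit constant slightly larger than the one in the hypothesis). On $G$ the hypothesis gives $|\lambda_{S,x_0,\beta}(f)|\ll\epsilon$, and since $G$ is a $\beta$-thickening of a bounded subset of the $m$-dimensional manifold $S$ one has $|G|\ll\beta^n$, so this piece contributes $\ll\epsilon$ to $\beta^{-n}\int_{G}\lambda_{S,x_0,\beta}(f)\,dx_0$. For $x_0\in N_\beta\setminus G$ there must exist $q\in\supp \lambda_S$ with $|x_0-q|\le\beta$, and $q$ cannot be $\gamma$-curved, for otherwise $x_0$ would already lie in $G$. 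Hence $N_\beta\setminus G$ is contained in the $\beta$-thickening of the set $E=\{p\in\supp \lambda_S : e_S(p)<\gamma\}$ of non-$\gamma$-curved points.

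By Proposition \ref{sublevel}, $\sigma_S(E)\ll\gamma^{1/c_S}$, and a routine tubular-neighbourhood estimate then gives $|N_\beta\setminus G|\ll\beta^n\gamma^{1/c_S}$. The trivial bound $|\lambda_{S,x_0,\beta}(f)|\ll\|f\|_{\infty}\le 1$, which follows from $\lambda_S$ having a $C_c^\infty$ Radon--Nikodym derivative together with $\|\psi\|_\infty\ll 1$, then makes this piece contribute $\ll\gamma^{1/c_S}$. Combining both pieces yields $|\lambda_S(f)|\ll\epsilon+\gamma^{1/c_S}$, and the extra $\beta$ in the statement is a safe cushion term absorbing the boundary of $\supp\lambda_S$ at scale $\beta$ and the constant implicit in the $O(\beta)$ slack when enlarging $G$.

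The only step requiring nontrivial input is the sublevel estimate from Proposition \ref{sublevel}; the averaging identity, the covering of $N_\beta\setminus G$ by a tubular neighbourhood of $E$, and the uniform boundedness of $\lambda_{S,x_0,\beta}(1)$ are routine consequences of the real-analyticity and compactness of $\supp\lambda_S$, so I do not anticipate real difficulty beyond bookkeeping.
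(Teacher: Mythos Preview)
Your argument is essentially the direct version of what the paper does: the paper runs it as a contradiction through Proposition~\ref{localization_in_space}, which already packages your averaging identity $\int_{\R^d}\lambda_{S,x_0,\beta}(f)\,dx_0=\beta^n\lambda_S(f)$ and hands back a subset of $S$ (not of $\R^d$) of $\sigma_S$-measure $\gg\delta$ to compare directly with the bad set $E$ from Proposition~\ref{sublevel}.

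There is one point where your write-up is looser than you think. The step ``a routine tubular-neighbourhood estimate then gives $|N_\beta\setminus G|\ll\beta^n\gamma^{1/c_S}$'' is not quite routine: for an arbitrary measurable $E\subset S$ the tubular-neighbourhood bound only yields $|N_\beta(E)|\ll\beta^n\,\sigma_S(E_{O(\beta)})$, where $E_{O(\beta)}$ is the $O(\beta)$-thickening of $E$ \emph{inside} $S$, and there is no reason in general that $\sigma_S(E_{O(\beta)})\ll\sigma_S(E)$. You can repair this in two ways. Either observe that $e_S$ is Lipschitz (eigenvalues of a smoothly varying Hessian), so $E_{O(\beta)}\subset\{e_S<\gamma+O(\beta)\}$ and apply Proposition~\ref{sublevel} at level $\gamma+O(\beta)$, picking up an extra $\beta^{1/c_S}$ (harmless). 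Or, more cleanly, skip the volume bound altogether: since $\psi((x-x_0)/\beta)\neq 0$ with $x_0\notin G$ forces $x\in E$, swapping the integrals gives
\[
\int_{N_\beta\setminus G}|\lambda_{S,x_0,\beta}(f)|\,dx_0
\le \int_E\!\int_{\R^d}\frac{1}{\beta^m}\Big|\psi\Big(\frac{x-x_0}{\beta}\Big)\Big|\,dx_0\,d\lambda_S(x)
\ll \beta^n\,\lambda_S(E)\ll\beta^n\,\gamma^{1/c_S},
\]
which is exactly what you want. The paper's route via Proposition~\ref{localization_in_space} avoids this issue for the same reason: the comparison happens on $S$ rather than in $\R^d$.
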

\begin{proof}
We can assume that $\epsilon\ge \beta$, because otherwise we could {\color{black}replace} $\epsilon$ by $\beta$. By Proposition \ref{sublevel} we have  $\lambda_{S,x_0,\beta}(f)\ll \epsilon$ for every point at distance {\color{black} at most $2\beta$} from a subset $E\subset \supp\lambda_S$, with $\sigma_S(\supp\lambda_S\setminus E)\ll \gamma^{1/c_S}$. Thus if we were to have $\lambda_S(f)\gg \delta$, with $\delta=c(\epsilon+\beta+\gamma^{1/c_S})$, $c$ a large constant, we would get a contradiction by applying Proposition~\ref{localization_in_space}.
\end{proof}

\section{Fourier transform of the submanifold}

Here we want to study $\widehat\lambda_{S,x_0,\beta}$ at each frequency $\xi$, when $x_0$ is at distance $O(\beta)$ from a $\beta$-curved point on $S$. We would like to proceed by performing stationary phase, but we have the problem that we do not control all the eigenvalues of the Hessian of the phase function in the oscillatory integral. By the properties of the point $x_0$ however, we know that most eigenvalues are large, so we shall first separate the small eigenvalues and then use stationary phase with the big ones. We begin by diagonalizing a quadratic form with non-constant coefficients in the case in which the diagonal dominates over the rest of coefficients.

\begin{lemma}[Analytic diagonalization]
\label{diagonalization}
Let
\[
 F(x)=\sum_{i\le l} \lambda_i x_i^2 +2\delta \gamma\sum_{i\le j\le l} x_i x_j  \phi_{ij}(x)
\]
with $0<\delta<1$, $\phi_{ij}(x)$ real analytic functions on $[0,1]^l$, and $\gamma\le |\lambda_l|\le \ldots \le |\lambda_1|$, $\lambda_j\in\R$. For $\delta$ small enough (depending just on the functions $\phi_{ij}$)  there is a real-analytic change of variable $y=\psi(x)$ such that
\[
 F(x)=\sum_{i\le l} \lambda_i y_i^2
\]
and $\det D\psi(x)=1+O(\delta)$.
\end{lemma}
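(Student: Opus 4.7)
The plan is to prove the lemma by induction on $l$, completing the square in one variable at each stage. To make the induction close, I would actually prove the slightly stronger statement in which the $\phi_{ij}$ are allowed to depend analytically on finitely many additional parameters, with the threshold on $\delta$ depending only on the sup-norms of the $\phi_{ij}$. This extra generality is needed because, after each stage, the variable just eliminated is absorbed into the coefficient functions of the remaining quadratic form.

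For the base case $l=1$ one has $F=\lambda_1 x_1^2[1+2\delta\gamma\phi_{11}(x_1)/\lambda_1]$ with $|\lambda_1|\ge\gamma$, so for $\delta$ small the bracket is $1+O(\delta)$ and positive, and the analytic substitution $y_1=x_1\sqrt{1+2\delta\gamma\phi_{11}/\lambda_1}$ does the job with derivative $1+O(\delta)$. For the inductive step I would isolate the $x_1$-dependence as
\[
F(x)=\mu_1(x)x_1^2+2x_1\eta_1(x)+R_1(x),\qquad \mu_1=\lambda_1+2\delta\gamma\phi_{11},\quad \eta_1=\delta\gamma\sum_{j>1}x_j\phi_{1j},
\]
and, since $|\mu_1|\ge\gamma/2$ for $\delta$ small, complete the square to obtain $F=\mu_1(x_1+\eta_1/\mu_1)^2+(R_1-\eta_1^2/\mu_1)$. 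The substitution $y_1=\sqrt{\mu_1/\lambda_1}\,(x_1+\eta_1/\mu_1)$ is real-analytic because $\mu_1/\lambda_1=1+O(\delta)>0$, and it converts the first summand into $\lambda_1 y_1^2$. A direct expansion shows that the residual has exactly the standard form $\sum_{i\ge 2}\lambda_i x_i^2+2\delta\gamma\sum_{2\le i\le j}x_ix_j\tilde\phi_{ij}(x)$, since $\eta_1^2/\mu_1$ equals $\delta\gamma\cdot O(\delta)$ times an analytic quadratic in $(x_j)_{j>1}$; hence $\tilde\phi_{ij}=\phi_{ij}+O(\delta)$, analytic and uniformly bounded in terms of the original $\phi_{ij}$.

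Next, I would perform the analytic diffeomorphism $z_1=y_1$, $z_j=x_j$ for $j>1$; its Jacobian is upper triangular with diagonal entries $\partial y_1/\partial x_1=1+O(\delta)$ and $1$'s, so its determinant is $1+O(\delta)$, and by the implicit function theorem $x_1$ is recovered analytically from $z$. In the $z$-coordinates $F=\lambda_1 z_1^2+G(z_2,\ldots,z_l;z_1)$, and $G$ is again of the standard form in $l-1$ variables, with $z_1$ playing the role of an extra analytic parameter. The strengthened induction hypothesis then yields an analytic $(z_2,\ldots,z_l)\mapsto(y_2,\ldots,y_l)$ with $G=\sum_{i\ge 2}\lambda_i y_i^2$ and Jacobian $1+O(\delta)$; setting $y_1:=z_1$ and composing the two changes produces the desired $\psi$ with $F=\sum_{i\le l}\lambda_i y_i^2$. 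The Jacobians multiply, and since $l$ is fixed the overall determinant is still $1+O(\delta)$.

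The main obstacle is the bookkeeping that makes the induction close uniformly in $\delta$: one must verify that at each of the $l$ stages the new perturbing coefficients $\tilde\phi_{ij}$ stay uniformly bounded by constants depending only on the original $\phi_{ij}$, so that the threshold on $\delta$ does not degenerate across the iteration. The key structural fact that makes this work is the inequality $\delta^2\gamma^2/|\lambda_1|\le\delta\cdot(\delta\gamma)$, which uses the hypothesis $|\lambda_i|\ge\gamma$ and expresses the content of the lemma: completing a square contributes only an $O(\delta)$ \emph{relative} perturbation to the off-diagonal part, so the iteration does not blow up.
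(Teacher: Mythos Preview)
Your proof is correct and follows essentially the same approach as the paper: iterative completion of the square, using $|\lambda_i|\ge\gamma$ so that the cross term $\eta_1^2/\mu_1$ contributes only an $O(\delta)$ relative perturbation to the remaining coefficients. The only cosmetic differences are that you frame the iteration as an induction on $l$ (with the eliminated variable carried as an analytic parameter) and absorb the scaling $\sqrt{\mu_i/\lambda_i}$ at each stage, whereas the paper runs the elimination for all variables first and performs a single final rescaling $y_i=x_i^*\sqrt{1+\delta f_i/\lambda_i}$.
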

\begin{proof}
We can assume $\gamma=1$. We begin by $x_1$, writing 
\[
 F(x)=(\lambda_1+2 \delta\phi_{11}(x))x_1^2 + 2  x_1 \delta \sum_{1<j}  x_j\phi_{1j}(x) +\ldots 
\]
and then
\[
 F(x)= (\lambda_1+2\delta \phi_{11}(x)) [ x_1 +\delta\frac{ \sum_{1<j} x_j \phi_{1j}(x)}{\lambda_1+2\delta \phi_{11}(x)}]^2 - \frac{ (\delta\sum_{1<j}x_j \phi_{1j}(x))^2}{\lambda_1+2 \delta\phi_{11}(x)}+\ldots
\]
so we start with the change
\[
 x_1^*=x_1 +\delta\frac{ \sum_{1<j} x_j \phi_{1j}(x)}{\lambda_1+2\delta \phi_{11}(x)}
\]
that for $\delta$ small enough satisfies
\[
 \frac{\partial x_1^*}{\partial x_1}= 1 + O(\delta), \qquad   \frac{\partial x_1^*}{\partial x_j}=  O(\delta), 
\]
for any $j\ge 2$. Thus 
\[
 F=(\lambda_1+\delta \phi_{11}(x)) (x_1^*)^2 + \sum_{2\le i\le l}\lambda_i x_i^2+ 2\delta\sum_{2\le i\le j\le l} x_i x_j \phi_{ij}^*(x)
\]
with 
\[
\phi_{ij}^*=\phi_{ij} - \frac{\delta^2}{\lambda_1+2\delta \phi_{11}(x)}  \phi_{1i} \phi_{1j}.
\]
Continuing like that with $x_2,x_3,\ldots$, we arrive at
\[
 F=\sum_i (\lambda_i + \delta f_i(x^*)) (x_i^*)^2 =\sum_i \lambda_i(1 + \frac{\delta}{\lambda_i} f_i(x_*)) (x_i^*)^2
\]
with $\partial x_i^*/\partial x_i=1+O(\delta)$ and $\partial x_i^*/\partial x_j= O(\delta)$ for any $i\neq j$, and $f_i(x^*)$ are real-analytic functions on $[0,1]^l$ with bounded derivatives.
We finish with the change
\[
 y_i= x_i^*\sqrt{1+\frac{\delta}{\lambda_i} f_i(x^*)}= x_i^* + \frac{\delta}{\lambda_i} \tilde f_i(x^*)
\]
whose determinant is a smooth function near 1, and then
\[
 F=\sum_i \lambda_i  y_i^2
\]
with $\det Dy/Dx=1+O(\delta)$.
 
\end{proof}

{\color{black}
In order to study the Fourier transform of the submanifold we will use the following result.

\begin{lemma}[Simple Stationary Phase]
\label{fourier_integral}
Let $g(t,x)\in C_c^{\infty}(\R\times\R^l)$ be a function supported on the ball of radius $r>0$ of $\mathbb R^{l+1}$, then for any $\lambda>1$ we can write
\[
 \int_{\R} g(t,x) e(\lambda t^2)\, dt = \lambda^{-1/2} G_{\lambda}(x)
\]
where $G_{\lambda}\in C_c^{\infty}(\R^l)$ is a function supported on the ball of radius $r$ in $\mathbb R^l$ and satisfying
\[
 \|G_{\lambda}\|_{L^{\infty}}\le 5 \|g\|_{L^{\infty}}+ 5\|\partial_t g\|_{L^{\infty}}+5\|\partial_t^2 g\|_{L^{\infty}}.
\]
\end{lemma}
\begin{proof}
We have that
\[
 G_{\lambda}(x)=\sqrt{\lambda}\int_{\mathbb R} g(t,x)e(\lambda t^2)\, dt.
\]
We see that the question about the support is trivial, hence we just need to bound $\|G_{\lambda}\|_{L^{\infty}}$. By performing the change $t\mapsto t/\sqrt{\lambda}$ we get that
\[
 G_{\lambda}(x)=\int_{\mathbb R} g(\frac{t}{\sqrt{\lambda}},x)e(t^2)\, dt.
\]
By splitting the integral we have $|G_{\lambda}(x)-G_{\lambda}^{+}(x)|\le 2\|g\|_{L^{\infty}}$ with
\[
  G_{\lambda}^{+}(x)=\int_{1}^{\infty} [g(\frac t{\sqrt \lambda}, x)+g(-\frac{t}{\sqrt{\lambda}},x)]e(t^2) \, dt,
\]
and by a change of variables
\[
 G_{\lambda}^{+}(x)=\int_{1}^{\infty} \frac{g(\frac{\sqrt{s}}{\sqrt \lambda}, x)+g(-\frac{\sqrt{s}}{\sqrt{\lambda}},x)}{2\sqrt{s}}e(s) \, ds.
\]
Now, by integrating by parts twice we have
\[
 |\int_1^{\infty} h(s) e(s)\, ds|\le \|h\|_{L^{\infty}}+\|h'\|_{L^{\infty}}+\|h''\|_{L^1}
\]
for any $h\in C_c^{\infty}([1,\infty))$, the norms considered in that interval. Applying it to the integral defining $G_{\lambda}^{+}(x)$ we get that
\[
 \|G_{\lambda}^{+}\|_{L^{\infty}}\le 3\|g\|_{L^{\infty}}+3\|\partial_t g\|_{L^{\infty}}+3\|\partial_t^2 g\|_{L^{\infty}}
\]
so the result follows.
\end{proof}

}

Now we give our result for the Fourier transform of the manifold, in which we can see the resemblance with the case of the sphere.

\begin{proposition}[Fourier Transform of the submanifold]
\label{stationary_phase}
Let $\xi_0\in \R^d$ with $|\xi_0|>1$.  There exist constants $c'<1<c$ depending just on $S$ such that if $x_0\in \R^d$ is a point at distance $O(\beta)$ from a $c\beta$-curved point of $S$, with $0<\beta<c'$, then for every $\xi\in\R^d$ with $|\xi-\xi_0|<\beta|\xi_0|$ we have
\[
 \widehat\lambda_{S,x_0,\beta}(\xi)=(|\xi_0|\beta^3)^{-\frac{m-n}2}\int_{\R^n} \Phi_r(\tilde\xi)e(|\xi_0|\Phi_r^*(\tilde\xi))\, dr +  O((|\xi_0|\beta^3)^{-\frac 1{|o(1)|}}),
\]
with $\tilde\xi=\xi/|\xi_0|$, $\Phi_r,\Phi_r^*$ depending on $S, x_0$ and $\beta$ but with uniformly bounded derivatives and $\Phi_{\cdot}(\cdot)$ with uniformly bounded  support on $\R^{2d}$.
\end{proposition}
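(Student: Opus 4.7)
The plan is to realize $\widehat{\lambda_{S,x_0,\beta}}(\xi)$ as an oscillatory integral and carry out stationary phase in the non-degenerate directions. By compactness of $\supp\lambda_S$ and a partition of unity, I reduce to a single real-analytic chart $\varphi(t)=(t,w(t))$ near the $c\beta$-curved point $p$ within $O(\beta)$ of $x_0$, and write
\[
\widehat{\lambda_{S,x_0,\beta}}(\xi) = \int_{\R^m} e(-\xi\cdot\varphi(t))\, g_{x_0,\beta}(t)\, dt,
\]
with $g_{x_0,\beta}$ supported in a ball of radius $O(\beta)$ around $t_0 = \varphi^{-1}(p)$ and with bounded $C^k$-norms. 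I rescale $t = t_0 + \beta s$, absorb the $\beta^m$ Jacobian into the amplitude, and Taylor-expand to third order, obtaining the phase $-\xi\cdot\varphi(t_0) - \beta\,\ell(\xi)\cdot s - \tfrac{\beta^{2}}{2}\, s^{T} H_{\xi_w}(t_0)\, s + O(\beta^{3}|\xi|)$, where $\xi = (\xi_t,\xi_w) \in \R^m\times\R^n$, $\ell(\xi) = \xi_t + (Dw(t_0))^{T}\xi_w$, and $H_{\xi_w}$ is the Hessian of $\xi_w\cdot w$. Writing $\xi = |\xi_0|\tilde\xi$ exhibits $|\xi_0|$ as the large parameter and the quadratic weight as $|\xi_0|\beta^{2}$.

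Next I invoke the curvature hypothesis. For unit $\tilde\xi_w$, the definition of $c\beta$-curved ensures that $H_{\tilde\xi_w}(t_0)$ has at least $m-n+1$ eigenvalues of absolute value $\ge c\beta$, so its top $m-n$ eigenvalues $\lambda_1,\dots,\lambda_{m-n}$ are separated from the bottom $n$ by a spectral gap of size $\gtrsim\beta$. An orthogonal rotation aligns these top eigendirections with the first $m-n$ coordinate axes at $s=0$. Since $H_{\tilde\xi_w}(t_0+\beta s) = H_{\tilde\xi_w}(t_0) + O(\beta)$ on the unit ball, Lemma~\ref{diagonalization} applies with $l = m-n$, $\gamma = c\beta$, and $\delta$ a small absolute constant, producing a real-analytic change $y=\psi(s)$ with $\det D\psi = 1+O(\beta)$ that eliminates the cross terms between the large block $y' = (y_1,\dots,y_{m-n})$ and the remaining $n$ variables $r = (y_{m-n+1},\dots,y_m)$, reducing the relevant part of the phase to $\sum_{j=1}^{m-n}\lambda_j(r)\, y_j^{2}$ with $|\lambda_j(r)|\asymp|\lambda_j|\ge c\beta$ uniformly.

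I then perform stationary phase in $y'$ with large parameter $|\xi_0|\beta^{2}$ and quadratic eigenvalues $\ge c\beta$. Each of the $m-n$ directions contributes $(|\xi_0|\beta^{2}|\lambda_j(r)|)^{-1/2}$, so the prefactor is $(|\xi_0|\beta^{3})^{-(m-n)/2}$ times a smooth, uniformly bounded function of $(r,\tilde\xi)$ which I absorb into $\Phi_r(\tilde\xi)$. The critical-value phase takes the form $|\xi_0|\Phi_r^{*}(\tilde\xi)$ for a smooth $\Phi_r^{*}$ with uniformly bounded derivatives; the remaining $r$-integration matches the integral in the statement, and truncating the full asymptotic expansion of stationary phase at a sufficiently high order yields the error $O((|\xi_0|\beta^{3})^{-1/|o(1)|})$.

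The main obstacle is twofold. First, to apply Lemma~\ref{diagonalization} and obtain $\Phi_r,\Phi_r^{*}$ with uniformly bounded derivatives in $\tilde\xi$, the spectral projections of $H_{\tilde\xi_w}(t_0)$ onto its top $m-n$ and bottom $n$ eigenvalues must depend smoothly on $\tilde\xi$; this uses that the $\gtrsim\beta$ spectral gap is not crossed on the relevant range, which is guaranteed precisely by taking the constant $c$ in the statement sufficiently large relative to the constant in the curvature hypothesis. Second, in the regime where $|\tilde\xi_w|$ is small the quadratic weight drops and stationary phase fails, but then $\ell(\xi)$ is of size $|\xi_0|$ and non-stationary phase via integration by parts on $s$ produces decay faster than any polynomial in $|\xi_0|\beta$, which is absorbed into the stated error.
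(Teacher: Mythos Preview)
Your overall strategy matches the paper's, but there is a genuine gap at the diagonalization step. You claim that the $c\beta$-curvature hypothesis yields a spectral gap of size $\gtrsim\beta$ between the top $m-n$ and bottom $n$ eigenvalues of $H_{\tilde\xi_w}(t_0)$, and rely on this to make the diagonalizing rotation depend smoothly on $\tilde\xi$. But the curvature condition says only that the $n$-th smallest absolute eigenvalue exceeds $c\beta$; it gives no separation whatsoever between the $n$-th and the $(n{+}1)$-th. For instance all $m$ eigenvalues could equal $1$, or $n-1$ of them could vanish while the remaining $m-n+1$ all equal exactly $c\beta$: in either case the gap you want is zero. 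Even a genuine gap of order $\beta$ would only bound the eigenprojection derivatives by $O(1/\beta)$, so $\Phi_r,\Phi_r^*$ would not have \emph{uniformly} bounded $\tilde\xi$-derivatives as the statement requires.

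The paper avoids this entirely by diagonalizing $D^2(\tilde\xi_{0,2}\cdot w)(t_0)$ at the \emph{fixed} reference frequency $\xi_0$ rather than at the variable $\xi$. The orthogonal matrix $g_0$ is then a constant, and since $|\tilde\xi-\tilde\xi_0|<\beta$ the discrepancy in the Hessian is $O(\beta)$ and gets absorbed, together with the cubic Taylor remainder, into a single term $\beta^3 W(\tilde\xi,t)$ that is analytic in $\tilde\xi$ with uniformly bounded coefficients. The last $n$ coordinates $r$ are then simply frozen as parameters (note that Lemma~\ref{diagonalization} as stated requires \emph{all} $|\lambda_i|\ge\gamma$, so it cannot be applied to the full $m$-variable form to ``eliminate cross terms between blocks'' as you describe); for each fixed $r$ the paper first disposes of the non-stationary case $|s_j|\gg\beta|\lambda_j|$ by integration by parts, then locates the critical point in the remaining $m-n$ variables via a contraction argument, shifts to it, and only then invokes Lemma~\ref{diagonalization} with $\gamma=c\beta$ and $\delta\asymp 1/c$. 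Once the diagonalization is pinned to $\xi_0$ in this way, the rest of your sketch goes through essentially as written.
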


\begin{proof}
We can assume that  near $x_0$ the manifold $S$ is parametrized as $(t,w(t))$, with  $w:(-1,1)^m\to \R^n$. Then $ \widehat\lambda_{S,x_0,\beta}(\xi)$ can be written as
\[
 \widehat\lambda_{S,x_0,\beta}(\xi)=\beta^{-m}\int_{\R^m} e(-\xi_1 t-\xi_2 w(t)) \, \psi_*(\frac{t-t_0}{\beta}) \, dt
\]
with $\xi=(\xi_1,\xi_2)$ and $\psi_*(s)=\psi(s+c_1, \frac{w(t_0+\beta s)-w(t_0)}{\beta}+c_2) g(t_0+\beta s)$, where $(t_0,w(t_0))$ is the $c\beta$-curved point on $S$ at distance $O(\beta)$ from $x_0$, $c_1,c_2$ bounded constants, and $g$ a $C_c^{\infty}$ function coming from $\lambda_S$. Thus we have that $\psi_*$ is $C^{\infty}$ function with uniformly bounded support and derivatives. By the change $t\mapsto \beta(t+t_0)$ we have
\[
 \widehat\lambda_{S,x_0,\beta}(\xi)=\int_{\R^m}    \psi_*(t) e(|\xi_0|(-\tilde\xi_1 (t_0+\beta t)-\tilde\xi_2 w(t_0+\beta t))) \, \, dt,
\]
with $\tilde\xi=(\tilde\xi_1,\tilde\xi_2)$. Let $\tilde\xi_0=(\tilde\xi_{0,1},\tilde\xi_{0,2})$. If $|\tilde\xi_{0,2}|<k$ for some constant $0<k<1$ depending just on $w$, integrating by parts with respect to some variable we have $\widehat\lambda_{S,x_0,\beta}\ll (\beta|\xi_0|)^{-1/|o(1)|}$, hence we can assume that $|\tilde\xi_{0,2}|\gg 1$. Let $g_0$ be an orthogonal matrix such that the matrix $g_0^t D^2(\tilde\xi_{0,2}w)(t_0)g_0$ is diagonal. Then since $(t_0,w(t_0))$ is a $c\beta$-curved point on $S$, by the change $t\mapsto g_0 t$ we get that
\[
 \widehat\lambda_{S,x_0,\beta}(\xi)=e_{\xi}\int_{\R^m} \tilde\psi(t) e(|\xi_0|\sum_{j\le m} {\color{black}(}\lambda_j \beta^2 t_j^2-s_j\beta t_j{\color{black})}) e(|\xi_0|\beta^3 W(\tilde\xi,t)) \, dt
\]
with {\color{black}$e_{\xi}=e(-|\xi_0|(t_0\tilde\xi_1+w(t_0)\tilde\xi_2))$ and $\tilde\psi(t)=\psi_*(g_0 t)$}, $\lambda=(\lambda_j)_{j\le m}$ constant, $\lambda=O(1)$, with $|\lambda_j|\ge c\beta$ for every $j\le m-n$, $W$ real-analytic on $(-2,2)^d\times\supp\tilde \psi$ with uniformly bounded coefficients and terms of degree at least 2 in $t$, and $(s_j)_{j\le m}=(\tilde\xi_1+\tilde\xi_2 Dw(t_0) )g_0$. By separating the variables $r=(t_{m-n+1},\ldots t_m)$ we can write
\[
\widehat\lambda_{S,x_0,\beta}(\xi)=\int_{\R^n} e(|\xi_0|\Phi_{r,1}^*(\tilde\xi)) \, \int_{\R^{m-n}} \psi_r(t) e(|\xi_0|f_r(\tilde\xi, t)) \, dt \, dr
\]
 with $\psi_r$ and $\Phi_{r,1}^*$ satisfying the same properties as $\tilde\psi$ and $\Phi_r^*$ respectively, 
\[ 
f_r(\tilde\xi,t)=\beta^3 W_r(\tilde\xi,t)+\sum_{j\le m-n} \lambda_j \beta^2 t_j^2-s_j \beta t_j.
\]
If  $|s_j|\gg \beta|\lambda_j|$ for some $j\le m-n$ then integrating by parts we have $\widehat\lambda_{S,x_0,\beta}(\xi)\ll (\beta^3|\xi_0|)^{-1/|o(1)|}$. Thus, we can assume $s_j/\beta\lambda_j\ll 1$ for every $j\le m-n$.  The equation $Df_r(\tilde\xi, t)=0$ can be written as
\[
  t_j= \frac{s_j(\tilde\xi)}{2\lambda_j\beta}-\frac{\beta}{2\lambda_j} \frac{\partial W_{r}}{\partial t_j}(\tilde\xi,t)   \qquad  j\le m-n;
\]
we have $|\beta/2\lambda_j|\le 1/2c$, and then for $c$ large enough that equation has a unique solution $t=\varrho_r(\tilde\xi)$, with $\varrho_r$ a real-analytic function with uniformly bounded coefficients on the support of $\psi_r$. By the change $t\mapsto t+\varrho_r(\tilde\xi)$ we have
\[
\widehat\lambda_{S,x_0,\beta}(\xi)=\int_{\R^n} e(|\xi_0|\Phi_{r,2}(\tilde\xi)) \, \int_{\R^{m-n}} \psi_{r,2}(t,\tilde\xi) e(|\xi_0|f_r^*(\tilde\xi,t)) \, dt \, dr
\]
with
\[
f_r^*(\tilde\xi,t)= \beta^3 W_r^*(\tilde\xi,t)+\sum_{j\le m-n}\lambda_j \beta^2 t_j^2,
\]
$W_r^*$ real-analytic with uniformly bounded coefficients and terms of degree at least 2 in $t$.
By applying the change from Lemma \ref{diagonalization} we have
\[
\widehat\lambda_{S,x_0,\beta}(\xi)=\int_{\R^n} e(|\xi_0|\Phi_{r,3}(\tilde\xi)) \, \int_{\R^{m-n}} \psi_{r,3}(t,\tilde\xi) e(\beta^2|\xi_0|\sum_{j\le m-n} \lambda_j t_j^2) \, dt \, dr.
\]
{\color{black}
We can assume that $c\beta^3 |\xi_0|>1$, because otherwise the statement of the proposition is trivial. Then, the result will follow by applying Lemma \ref{fourier_integral} iteratively in order to handle the the inner integral. For instance, in the first iteration we would get that
\[
 \int_{\mathbb R} \psi_{r,3}(t_1,t',\tilde{\xi})e(\beta^2|\xi_0|\lambda_1 t_1^2)\, dt_1=(\beta^2 |\xi_0|\lambda_1)^{-1/2}\psi_{r,4}(t',\tilde{\xi})
\]
with $t'=(t_2,t_3,\ldots,t_{m-n})$ and $\psi_{r,4}$ a function with bounded derivatives and support. In the final iteration we would arrive at
\[
\widehat\lambda_{S,x_0,\beta}(\xi)=\int_{\R^n} e(|\xi_0|\Phi_{r,3}(\tilde\xi))  \frac{\psi_{r,3+m-n}(\tilde\xi)}{\sqrt{\prod_{j\le m-n} \beta^2|\xi_0|\lambda_j}} \, dr,
\]
and the result follows by using that $\lambda_j^{-1}=\beta^{-1}(\beta/\lambda_j)$ with $\beta/\lambda_j=O(1)$.
}
\end{proof}

\section{Mixing case: Midrange frequencies}

We begin by noticing what we need in order to control the midrange frequencies, after using the information about the Fourier Transform of $S$ obtained in the previous section.

\begin{proposition}[Oscillatory integral controls midrange frequencies]
\label{midrange_bound_oscillatory}
Let $S$ be totally curved and $f\in C_c^{\infty}(\R^d)$. Let $x_0$ be a point at distance $O(\beta)$ from a $c\beta$-curved point on $S$, with $0<\beta<1$.  There exist $\Phi,\Phi^*\in C_c^{\infty}(\R^d)$ with uniformly bounded derivatives and support, and $\rho T\ll U \ll T/\rho$ such that
\[
 \lambda_{S,x_0,\beta}^m(f)\ll U^n \beta^{-3d} \log(1/\rho) [ I_U + O((\beta^3 U)^{-1/|o(1)|})]
\]
with
\[
 I_U=\frac{1}{\sqrt{ U^d}}\int_{\R^d} \Phi(\frac{\xi}{U}) \widehat f(\xi) e(x\xi) e(U\Phi^*(\frac{\xi}{U})) \, d\xi.
\]
for some $x\in\R^d$.
\end{proposition}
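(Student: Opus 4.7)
The plan is to start from the Plancherel representation
$\lambda^m_{S,x_0,\beta}(f) = \int \widehat f(\xi) \overline{\widehat{\lambda_{S,x_0,\beta}}(\xi)} \eta_m(\xi)\, d\xi$
and reduce it, through frequency decomposition, cap decomposition, and Proposition \ref{stationary_phase}, to the single oscillatory integral $I_U$. I would first decompose the multiplier $\eta_m$, which is supported on $\rho T\ll|\xi|\ll T/\rho$, into $O(\log(1/\rho))$ smooth dyadic cutoffs $\eta_U$ supported on $|\xi|\sim U$ for $U=2^k\rho T$; pigeonholing over $k$ produces one dominant scale $U$ at the cost of the factor $\log(1/\rho)$. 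On the resulting annulus, use a smooth partition of unity to write $\eta_U=\sum_j\varphi_j$ where $\varphi_j$ is supported on a cap of radius $\sim\beta U$ centered at some $\xi_{0,j}$ with $|\xi_{0,j}|\sim U$; this produces $O(\beta^{-d})$ caps, on each of which Proposition \ref{stationary_phase} gives
\[
\overline{\widehat{\lambda_{S,x_0,\beta}}(\xi)} = (U\beta^3)^{-(m-n)/2}\int_{\R^n}\overline{\Phi_{r,j}(\xi/|\xi_{0,j}|)}\,e(-|\xi_{0,j}|\Phi^*_{r,j}(\xi/|\xi_{0,j}|))\,dr + E_j(\xi),
\]
with $|E_j|\ll(U\beta^3)^{-1/|o(1)|}$.

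Next, plug the expansion into the cap integral $\int\widehat f\,\overline{\widehat\lambda}\,\varphi_j\,d\xi$; since the $r$-support of $\Phi_{r,j}$ is uniformly bounded we can replace $\int_r$ by a supremum, and a further pigeonhole over the caps $j$ and over the parameter $r$ extracts a single integrand. To bring this into the shape of $I_U$, split $\Phi^*_{r,j}=L_{r,j}+\widetilde\Phi^*_{r,j}$ with $L_{r,j}$ the affine part; the factor $e(-|\xi_{0,j}|L_{r,j}(\xi/|\xi_{0,j}|))$ becomes $e(x\xi)$ for some $x\in\R^d$ (up to a unit-modulus constant), and rescaling by the bounded factor $c_j=|\xi_{0,j}|/U\in[1,2]$ rewrites $|\xi_{0,j}|\widetilde\Phi^*_{r,j}(\xi/|\xi_{0,j}|)$ as $U\Phi^*(\xi/U)$ for suitable $\Phi^*\in C_c^\infty$; similarly the amplitude $\Phi_{r,j}(\xi/|\xi_{0,j}|)\varphi_j(\xi)$ becomes $\Phi(\xi/U)$.

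Then collect exponents. The prefactor $\beta^{-d}(U\beta^3)^{-(m-n)/2}=\beta^{-d-3(m-n)/2}U^{n-d/2}$ is bounded by $\beta^{-3d}U^{n-d/2}$ since $d+3(m-n)/2\le d+3d/2\le 3d$, and pairing $U^{-d/2}$ with the normalization $1/\sqrt{U^d}$ of $I_U$ produces exactly the stated $U^n\beta^{-3d}\log(1/\rho)$ out front. The total error is bounded using $\|\widehat f\|_\infty\le\|f\|_{L^1}\ll 1$ by $O(\beta^{-d})\cdot(\beta U)^d\cdot(U\beta^3)^{-1/|o(1)|}=U^d(U\beta^3)^{-1/|o(1)|}$, which fits inside $U^n\beta^{-3d}(U\beta^3)^{-1/|o(1)|}$ by absorbing polynomial factors into the ``arbitrarily large power'' encoded in $1/|o(1)|$.

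The main technical obstacle will be ensuring that the emerging $\Phi,\Phi^*$ have derivatives and supports bounded uniformly in $\beta$, $U$, $T$, $x_0$ and $\xi$. Since the partition-of-unity cutoffs $\varphi_j$ carry derivatives of size $(\beta U)^{-1}$, their rescaled versions $\varphi_j(U\,\cdot)$ carry derivatives of size $\beta^{-1}$; these $\beta$-powers, together with the ones generated by rescaling $\Phi_{r,j}$, need to be accounted for either by enlarging the $\beta^{-3d}$ safety factor (which the slack $d+3(m-n)/2\le 3d$ allows) or by interpreting ``uniformly bounded'' as ``depending only on $S$''. Handling this bookkeeping carefully is the crux of the proof.
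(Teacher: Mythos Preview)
Your overall plan --- dyadic-plus-cap decomposition into $O(\beta^{-d}\log(1/\rho))$ pieces, pigeonhole, apply Proposition~\ref{stationary_phase}, extract a single oscillatory integral --- is exactly the paper's strategy, and your exponent bookkeeping is fine. The one real gap is the issue you flag in your last paragraph: once the cap cutoff $\varphi_j$ is absorbed into $\Phi(\cdot/U)$, the resulting $\Phi$ has derivatives of size $\beta^{-k}$, contradicting the ``uniformly bounded derivatives'' clause in the statement. Neither of your proposed fixes actually works: the $\beta^{-3d}$ prefactor governs size, not smoothness, and the downstream Proposition~\ref{van_der_corput} genuinely needs uniform derivative bounds on $\Phi,\Phi^*$ (its implicit constants depend on them); and ``depending only on $S$'' cannot absorb dependence on the free parameter $\beta$. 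So this is not mere bookkeeping.

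The paper's resolution is different from yours and worth knowing. It does \emph{not} merge the cap cutoff into $\Phi$. After reaching
\[
\tilde I_U=\frac{1}{\sqrt{U^d}}\int_{\R^d} \eta_*\Big(\frac{\xi-\xi_0}{\beta U}\Big)\,\Phi\Big(\frac{\xi}{U}\Big)\,\widehat f(\xi)\,e\big(U\Phi^*(\xi/U)\big)\,d\xi
\]
with a \emph{fixed} bump $\eta_*$ independent of $\beta$, it Fourier-inverts the cutoff: write $\eta_*(v)=\int_{\R^d}\widehat\eta_*(x)\,e(vx)\,dx$, interchange the integrals, and bound by $\int|\widehat\eta_*|<\infty$ times $\sup_x|I_U|$. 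The phase $e\big(\tfrac{x}{\beta U}\cdot\xi\big)$ produced this way \emph{is} the $e(x\xi)$ in the statement (after renaming $x$); it is not extracted from the linear part of $\Phi^*$ as you propose. This move removes the $\beta$-dependence cleanly: the surviving $\Phi$ is just the amplitude coming from Proposition~\ref{stationary_phase}, already uniformly bounded, and the troublesome cap cutoff has been converted into a harmless linear phase.
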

\begin{proof}
We can split the frequencies into $O(\beta^{-d}\log(1/\rho))$ pieces, so that
\[
 \lambda_{S,x_0,\beta}^m(f)\ll \beta^{-d}\log(1/\rho) \int_{\R^d} \eta_*(\frac{\xi-\xi_0}{\beta |\xi_0|})  \widehat f(\xi)\widehat\lambda_{S,x_0,\beta}(\xi) \, d\xi
\]
with $\eta_* \in C_c^{\infty}$ independent from $\beta,\rho$ and $T$, $\xi_0$ some frequency in the range $\rho T\ll |\xi_0|\ll T/\rho$.  By applying Proposition \ref{stationary_phase}, interchanging the integrals and using the compactness of the support of $\Phi_{\cdot}(\cdot)$ we have
\[
 \lambda_{S,x_0,\beta}^m(f)\ll U^n \beta^{-3d} \log(1/\rho) [ \tilde I_U + O((\beta^3 U)^{-1/|o(1)|})]
\]
with $U=|\xi_0|$ and
\[
 \tilde I_U=\frac{1}{\sqrt{ U^d}}\int_{\R^d} \eta_*(\frac{\xi-\xi_0}{\beta U}) \Phi(\frac{\xi}{U})  \widehat f(\xi) e(U\Phi^*(\frac{\xi}{U})) \, d\xi.
\]
Now, by using the formula $\eta_*(v)=\int_{\R^d} \widehat\eta_*(x)e(vx)\, dx$, interchanging the order of the integrals and applying the trivial H\"older inequality we are done, since $\int_{\R^d} |\widehat\eta_*(x)|\, dx<\infty$.

\end{proof}

We could try to control the oscillatory integral appearing in the statement of the previous proposition by applying the Fourier Transform (Plancherel Theorem), but in doing so we would come back essentially to our original definition for $\lambda_{S,x_0,\beta}$, and then we would get just the trivial bound $O(1)$.
This problem comes from the fact that the range of frequencies $\xi$  and  the growth of  the exponential's phase are both very large; we are going to see that both problems can be addressed by using Weyl-Van der Corput inequality before applying the Fourier Transform, and we just have to pay  by having to control $\widehat f_s$ instead of $\widehat f$, with $f_s(x)=f(x+s)\overline f(x)$.

\begin{proposition}[Frequency control]
\label{van_der_corput}
Let $f\in C_c^{\infty}(\R^d)$ with $\|f\|_{L^{\infty}}\le 1$ {\color{black} and supported on a translate of the unit ball}.
Let $1\le U$ and $\Phi^{**}(\xi)=\Phi^*(\xi)+y\xi$ with $y\in\R^d$ and $\Phi,\Phi^*\in C_c^{\infty}(\R^d)$.  For every ${\color{black}1\le}V\le U$ and $0<\delta<1$ we have
\[
| \int_{\R^d} \Phi(\frac{\xi}{U}) \widehat f(\xi) e(U\Phi^{**}(\frac{\xi}{U})) \, \frac{d\xi}{\sqrt{U^d}} |^2   \ll   \delta^{-\frac d2}  \int_{\R^d} |\tilde\Phi(\frac{\xi}V)  \widehat f_{s_{\xi}}(\xi) | \, \frac{d\xi}{\sqrt{ V^d}} +  h_{\delta}
\]
for some function $s$, with $s(\xi)=s_{\xi}$ bounded by $V/U\delta$, with $f_s(x)=f(x+s)\overline f(x)$, $\tilde\Phi\in C_c^{\infty}(\R^d)$  and {\color{black}$h_{\delta}\ll V^{d} \delta^{1/|o(1)|}$}. The function $\tilde\Phi$ does not depend on anything, and the constants implicit in the bounds  are independent of $y$ and depend just on bounds for the derivatives and supports of $\Phi$ and $\Phi^*$.
\end{proposition}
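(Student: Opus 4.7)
My plan is to apply Weyl--Van der Corput in the frequency variable $\xi$, to unfold $\widehat{f_s}$ via a Parseval-type identity, and then to estimate the resulting oscillatory $\eta$-integral.

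Setting $F(\xi) := \Phi(\xi/U)\widehat f(\xi)\, e(U\Phi^{**}(\xi/U))/\sqrt{U^d}$, supported in $|\xi|\ll U$, the multidimensional Weyl--Van der Corput inequality at shift scale $H=V$ gives
\[
\Bigl|\int F\,d\xi\Bigr|^2 \ll \frac{U^d}{V^d}\int_{|h|<V}\Bigl|\int F(\xi+h)\overline{F(\xi)}\,d\xi\Bigr|\,dh.
\]
For the autocorrelation, I would use the identity
\[
\widehat f(\xi+h)\overline{\widehat f(\xi)} = \int e\bigl(-(\xi+h)s\bigr)\widehat{f_s}(h)\,ds,
\]
which follows from expanding each side as a double integral and changing $x'=x-s$. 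After rescaling $\xi=U\eta$ this gives $\int F(\xi+h)\overline{F(\xi)}\,d\xi = \int \widehat{f_s}(h)\,e(-hs)\,P(h,s)\,ds$ with $P(h,s)=\widehat{\Psi_h}(Us)$ and
\[
\Psi_h(\eta) = \Phi(\eta+h/U)\overline{\Phi(\eta)}\,e\bigl(U[\Phi^{**}(\eta+h/U) - \Phi^{**}(\eta)]\bigr)\in C_c^\infty.
\]
Here $y$ enters $\Psi_h$ only through the global phase $e(yh)$, so the ensuing bound is independent of $y$.

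The crucial estimate is $\int|P(h,s)|\,ds \ll |h|^{d/2}/U^d$. This comes from Plancherel (which yields $\int|P|^2\,ds\ll 1/U^d$) combined with an effective-support argument: since the phase of $\Psi_h$ has gradient $hH\Phi^{**}(\eta)+O(h^2/U)$, repeated integration by parts in $\eta$ shows $\widehat{\Psi_h}$ is concentrated on $\{\zeta=hH\Phi^{**}(\eta^*):\eta^*\in\mathrm{supp}\,\Phi\}$, of $d$-volume $\ll |h|^d$, corresponding to $|s|\ll |h|/U$; Cauchy--Schwarz on this region yields the claim. Contributions outside $|s|\le V/(U\delta)$ (and from the Taylor remainder $O(h^2/U)$ when one is at the edge of the usable range of $h$) collapse after $N$ iterations of integration by parts into the error $h_\delta \ll V^d(1/\delta)^{-1/|o(1)|}$.

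Combining these steps and bounding $|h|^{d/2}\le V^{d/2}$ on $|h|<V$ gives
\[
\Bigl|\int F\Bigr|^2 \ll \frac{1}{\sqrt{V^d}} \int_{|h|<V}\sup_{|s|\le V/U}|\widehat{f_s}(h)|\,dh.
\]
Taking $\tilde\Phi\in C_c^\infty(\R^d)$ equal to $1$ on $\{|\xi|\le 1\}$ and a measurable selector $s_\xi$ realizing the supremum (which exists by continuity of $s\mapsto \widehat{f_s}(\xi)$) rewrites the right-hand side as $(1/\sqrt{V^d})\int|\tilde\Phi(\xi/V)\widehat{f_{s_\xi}}(\xi)|\,d\xi$, and the stated bound follows since $V/U\le V/(U\delta)$ and $1\le \delta^{-d/2}$ for $\delta\in (0,1)$. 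The main obstacle I expect is keeping the integration-by-parts argument for the tail of $P(h,s)$ quantitative enough to extract the stated $(1/\delta)^{-1/|o(1)|}$ decay in $h_\delta$ uniformly in $y$; this requires the lower bound $|hH\Phi^{**}(\eta)-Us|\gtrsim \max(|h|,U|s|)$ on the phase gradient outside the stationary set, which in turn depends on the non-degeneracy of $H\Phi^{**}$ inherited from bounds on the derivatives of $\Phi^*$.
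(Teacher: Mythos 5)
Your proposal is correct and follows the same overall structure as the paper's argument: a Weyl--Van der Corput step followed by a Fourier-duality unfolding of the autocorrelation $\widehat f(\xi+h)\overline{\widehat f(\xi)}$ into $\widehat{f_s}$ against an oscillatory factor. Indeed, your identity $\widehat f(\xi+h)\overline{\widehat f(\xi)}=\int e(-(\xi+h)s)\widehat{f_s}(h)\,ds$ is exactly the paper's $\widehat{(\widehat f)_\zeta}(x)=\widehat{\overline f_x}(\zeta)$ unwound by inverse Fourier inversion, and your $P(h,s)=\widehat{\Psi_h}(Us)$ is the paper's $U^d\widehat{\overline E_{\zeta/U}}(Ux)$ up to a harmless phase $e(-hs)$ and normalization. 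The one genuine point of departure is the estimate on the oscillatory factor: the paper simply Cauchy--Schwarzes over the crude range $|x|<V/\delta$ (giving the $(V/\delta)^{d/2}\asymp\delta^{-d/2}V^{d/2}$ that produces the stated $\delta^{-d/2}$), whereas you first localize $\widehat{\Psi_h}$ to its effective support of measure $\ll(1+|h|)^d$ via nonstationary phase, and Cauchy--Schwarz only there, which removes the $\delta^{-d/2}$ loss entirely. Since $1\le\delta^{-d/2}$ and $V/U\le V/(U\delta)$, your sharper bound of course still yields the stated one.

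One remark on the worry you raise at the end: you do \emph{not} need a lower bound of the form $|hH\Phi^{**}(\eta)-Us|\gtrsim\max(|h|,U|s|)$, and in particular you do not need any non-degeneracy of $H\Phi^{**}$. For the tail contribution $|s|>V/(U\delta)$ one only uses the \emph{upper} bound $|\nabla_\eta\phi(\eta)|=U|\nabla\Phi^*(\eta+h/U)-\nabla\Phi^*(\eta)|\ll|h|$ (inherited purely from bounded derivatives of $\Phi^*$; the $y$-part of $\Phi^{**}$ cancels), so that for $U|s|\ge V/\delta\gg|h|$ the phase gradient $\nabla_\eta\phi(\eta)-Us$ has magnitude $\gg U|s|$, and repeated integration by parts gives the rapid decay that produces $h_\delta\ll V^d\,O(1/\delta)^{-1/|o(1)|}$, uniformly in $y$. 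Also note that the support measure of the effective region is more safely written as $\ll(1+|h|)^d$ rather than $|h|^d$, since the image $\{H\Phi^*(\eta)h\}$ is thickened by $O(\max(1,\sqrt{|h|}))$ after stationary-phase localization; this still gives $\ll V^{d/2}$ over $|h|<V$ and changes nothing downstream.
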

\begin{proof}
Let us call $I$ to the integral we want to bound. Then, we have
\[
 I=\int_{\R^d} \varrho(\frac{\zeta}V) I \, d\frac{\zeta}V
\]
for any $\varrho\in C_c^{\infty}$ real, {\color{black} radial} and supported in the unit ball with $\int_{\R^d}\varrho=1$,   with
\[
 I=\frac{1}{\sqrt{U^d}}\int_{\R^d}  \widehat f(\xi+\zeta) E^U(\xi+\zeta) \, d\xi 
\]
for every $\zeta\in\R^d$, with $E^U(\xi)=E(\xi/U)$, $E(\xi)=\Phi(\xi) e(U(y{\color{black}\xi}+\Phi^*(\xi)))$. Interchanging the integrals and applying Cauchy's inequality, taking into account the support of  $\Phi$ and $\varrho$, we have
\[
 |I|^2 \ll  \int_{\R^d} |\int_{\R^d}    \varrho(\frac{\zeta}V) \widehat f(\xi+\zeta) E^U(\xi+\zeta)    \, d\frac{\zeta}V |^2 \, d\xi.
\]
Expanding the square and making a change of variables we have
\[
 |I|^2\ll  \int \varrho*\varrho (\frac{\zeta}V)   [\int_{\R^d} (\widehat f)_{\zeta}(\xi) \overline{ (\overline E^U)_{\zeta}(\xi)} \, d\xi  ]    \, d\frac{\zeta}V
\]
with $g_{\zeta}(\xi)=g(\xi+\zeta)\overline g(\xi)$. 
Since $\widehat{(\widehat f)_{\zeta}}(x)=\widehat{\overline f_x}(\zeta)$ and $\widehat{(\overline E^U)_{\zeta}}(x)=U^d \widehat{\overline E_{\zeta/U}}(Ux)$, by Plancherel and the change of variable $x\mapsto x/U$ we have
\[
 \int_{\R^d} (\widehat f)_{\zeta}(\xi)\overline{ (\overline E^U)_{\zeta}(\xi)} \, d\xi  = \int_{\R^d} \widehat{\overline f_{x/U}}(\zeta) \overline{\widehat{\overline E_{\zeta/U}}(x)} \, dx.
\]
We have that $E_{\zeta/U}$ has compact support {\color{black}and any partial derivative of order $j\ge 0$} is bounded by $O((1+|\zeta|)^j)$, hence $\widehat{\overline{E}_{\zeta/U}}(x)\ll_j ({\color{black}|x|}/(1+|\zeta|))^{-j}$.
{\color{black} By using it for any $j>d$ we get that
\[
 \int_{|x|\ge V/\delta} |\widehat{\overline{E}_{\zeta/U}}(x)| \, dx \ll_j (1+|\zeta|)^j  (V/\delta)^{d-j}\ll_j V^j (V/\delta)^{d-j} \ll_j V^d \delta^{j-d},
\]
since $1+|\zeta|\ll V$. On the other hand, due to the support of $f$ we have $\|\widehat{f_s}\|_{L^{\infty}}\le \|f_s\|_{L^{1}}\ll_d \|f\|_{L^{\infty}}^2=1 $, and then } 
\[
 \int_{\R^d} \widehat{\overline f_{x/U}}(\zeta) \overline{\widehat{\overline E_{\zeta/U}}(x)} \, dx \ll |\widehat{\overline f_{{\color{black}s_{-\zeta}}}}(\zeta)| \int_{|x|<V/\delta}  |\widehat{\overline E_{\zeta/U}}(x)| \, dx  + h_{\delta}
\]
for some $|{\color{black}s_{-\zeta}}|\le V/U\delta$. {\color{black} Finally, by applying Cauchy's inequality followed by Plancherel we have
\[
 \int_{|x|<V/\delta}  |\widehat{\overline E_{\zeta/U}}(x)| \, dx \ll (V/\delta)^{d/2}  \|E_{\zeta/U}\|_{L^2} \ll (V/\delta)^{d/2},
\]
since $E_{\zeta/U}$ is bounded and compactly supported. Thus, the result follows with $\tilde{\Phi}=\varrho *\varrho$. }
\end{proof}

We are finally ready to control $\lambda_{S,x_0,\beta}^m (f)$ for $f$ a mixing function and $x_0$ a curved point.

\begin{proposition}[Midrange frequencies at curved points]
\label{main_midrange}
Let $S$ be totally curved and $f$ a $(T,\alpha)$-mixing function. Let $0<\beta<1$. There exists $c>1$ depending just on $S$ such that if $x_0$ is at distance $\beta$ from a $c\beta$-curved point of $S$ then we have
\[
\lambda_{S,x_0,\beta}^m(f) \ll  T^{n-\frac{\alpha}2 d} \beta^{-4d}\rho^{-3d/2} [1+R]
\]
with $R\ll T^{\alpha d}\min(\beta^{-1},\beta^3 \rho T)^{-1/|o(1)|}$ {\color{black} and $\rho$ as in (\ref{frequency_splitting})}.
\end{proposition}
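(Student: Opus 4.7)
My strategy is to chain together three tools already developed: Proposition~\ref{midrange_bound_oscillatory} to replace the midrange part by an oscillatory integral $I_U$; Proposition~\ref{van_der_corput} (the Weyl--van der Corput step) to unfold $|I_U|^2$ into an average of $\widehat{f_s}(\xi)$ for shifted correlations $f_s(x)=f(x+s)\overline f(x)$; and the character mixing condition~(\ref{character_condition}) to bound this average.

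First I apply Proposition~\ref{midrange_bound_oscillatory} to $\lambda_{S,x_0,\beta}^m(f)$: the hypothesis that $x_0$ is within $O(\beta)$ of a $c\beta$-curved point matches its assumption, so it yields a scale $U\in[\rho T,T/\rho]$ and $\Phi,\Phi^*\in C_c^\infty(\R^d)$ with uniformly bounded derivatives such that
\[
\lambda_{S,x_0,\beta}^m(f)\ll U^n\beta^{-3d}\log(1/\rho)\bigl[\,|I_U|+O((\beta^3 U)^{-1/|o(1)|})\bigr].
\]
The error $(\beta^3 U)^{-1/|o(1)|}$ already feeds, via $U\gg\rho T$, into the $(\beta^3\rho T)^{-1/|o(1)|}$ piece of the claimed remainder $R$.

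Next I apply Proposition~\ref{van_der_corput} to bound $|I_U|$, absorbing the factor $e(x\xi)$ inside the $e(U\Phi^*(\xi/U))$ by setting $\Phi^{**}(\xi)=\Phi^*(\xi)+y\xi$ with $y=x$. For any $V\le U$ and $\delta\in(0,1)$ this gives
\[
|I_U|^2\ll\delta^{-d/2}V^{-d/2}\int_{\R^d}|\tilde\Phi(\xi/V)\,\widehat{f_{s_\xi}}(\xi)|\,d\xi+h_\delta,
\]
with $|s_\xi|\le V/(U\delta)$ and $h_\delta\ll V^d(1/\delta)^{-1/|o(1)|}$. Fixing a bump $\psi$ equal to $1$ on $\supp f$ lets me replace $\widehat{f_{s_\xi}}$ by $\widehat{f_{s_\xi}\psi}$ without loss; then provided $2V\le T^\alpha$ and $V/(U\delta)\le T^\alpha/T$, pulling the supremum over $s$ inside the integrand and invoking the character mixing condition~(\ref{character_condition}) bounds the inner integral by a constant multiple of $\|\tilde\Phi\|_\infty\,\|\psi\|_{S^{4d}}\ll 1$. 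This yields $|I_U|^2\ll(\delta V)^{-d/2}+h_\delta$.

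The final step is optimization. One takes $V$ as large as the mixing condition allows, essentially $V\asymp T^\alpha$ whenever $T^\alpha\le U$, and $\delta$ just above the threshold $T/U$ forced by the shift constraint, choosing it of order $\beta$ so that $h_\delta\ll V^d\beta^{1/|o(1)|}$ matches the $T^{\alpha d}\beta^{-1/|o(1)|}$ contribution to $R$. Multiplying the resulting bound on $|I_U|$ by the prefactor $U^n\beta^{-3d}\log(1/\rho)$ then produces the bound in the statement, modulo tracking the $\rho$-powers and absorbing the logarithm into $\rho^{-3d/2}$. The main obstacle I anticipate is the joint feasibility of these parameter choices across the full range $\rho T\le U\le T/\rho$: in the degenerate regime $U\asymp\rho T$ the threshold $\delta\ge T/U\ge\rho^{-1}$ collides with $\delta<1$, forcing $V$ to be shrunk below $T^\alpha$ and producing an extra loss in $\rho$; this bookkeeping, together with balancing the stationary-phase residue from Proposition~\ref{stationary_phase} against the van~der~Corput remainder $h_\delta$, is what assembles the composite error term $R$ and is the most delicate part of the argument.
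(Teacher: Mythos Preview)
Your approach is essentially the paper's own: apply Proposition~\ref{midrange_bound_oscillatory}, then Proposition~\ref{van_der_corput}, then invoke the character mixing condition~(\ref{character_condition}), and optimise in $\delta$ and $V$. The only difference is in the parameter bookkeeping you flag as ``the main obstacle'': rather than starting with $V\asymp T^{\alpha}$ and then retreating when $U$ is near $\rho T$, the paper simply takes the uniform choice $\delta=\beta$ and $V=\beta\rho T^{\alpha}$ from the outset. Since $U\ge\rho T$, the shift constraint $V/(U\delta)\le T^{\alpha-1}$ then holds for every $U$ in the range, and the case analysis you anticipate disappears; the extra factor of $\rho$ in $V$ is exactly the ``extra loss in $\rho$'' you predict and is already absorbed in the stated bound $\rho^{-3d/2}$.
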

\begin{proof}
Apply Proposition \ref{midrange_bound_oscillatory} followed by Proposition \ref{van_der_corput}. Since $f$ is $(T,\alpha)$-mixing the resulting integral can be bounded for any $V\le \delta \rho T^{\alpha}$,  by (\ref{character_condition}) (in order to apply (\ref{character_condition}) to the integral coming from the bound in Proposition \ref{van_der_corput} we only need to use the support of $\tilde\Phi$). Thus, picking $\delta=\beta$ and $V=\beta\rho T^{\alpha}$, since $\rho T<U<T/\rho$  we get that
\[
 \lambda_{S,x_0,\beta}^m(f)\ll  T^n \beta^{-3d} \rho^{-n}\log(1/\rho)  [ \beta^{-d}\rho^{-d/2} T^{-\frac{\alpha}2 d}  +  R] 
\]
with $R\ll T^{\alpha d}\min(\beta^{-1},\beta^3 \rho T)^{-1/|o(1)|}$.
\end{proof}

\begin{proposition}[Mixing case at curved points]
\label{main_local}
Let $S$ be totally curved and $f$ a $(T,\alpha)$-mixing function with $0<\alpha<1$. {\color{black} There exists $c>1$ depending just on $S$ such that} if $x_0$ is a point at distance $O(\beta)$ from a $c\beta$-curved point of $S$ we have
\[
 \lambda_{S,x_0,\beta}(f)\ll \beta^{1/8}  
\]
for any $\beta$ in the range $T^{-\frac{\alpha^2}{20}+\frac{1}{10}\frac nd} < \beta {\color{black}\le} T^{-8\frac nd}$.
\end{proposition}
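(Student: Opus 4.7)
The plan is to combine the frequency decomposition $\lambda_{S,x_0,\beta} = \lambda_{S,x_0,\beta}^l + \lambda_{S,x_0,\beta}^m + \lambda_{S,x_0,\beta}^h$ from (\ref{frequency_splitting}) with the two bounds already available: Proposition \ref{main_low_high} for the low and high frequencies, and Proposition \ref{main_midrange} for the midrange ones. Adding them, for any $\rho>T^{-\alpha/2}$ one gets
\[
 |\lambda_{S,x_0,\beta}(f)| \ll (\rho/\beta^2)^{d/4} T^n + \rho + T^{n-\alpha d/2} \beta^{-4d} \rho^{-3d/2}(1+R),
\]
with $R \ll T^{\alpha d}\min(\beta^{-1},\beta^3\rho T)^{-1/|o(1)|}$, so the problem reduces to choosing $\rho$ cleverly.

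Writing $\beta = T^{-\gamma}$ and $\rho = T^{-\mu}$, I would ask each of the three explicit terms to be $\le T^{-\gamma/8}=\beta^{1/8}$. This produces a small linear system in $\mu$: the low/high contributions demand $\mu \ge \max(\gamma/8,\; 4n/d + 2\gamma)$, while the midrange contribution demands $\mu \le \alpha/3 - 2n/(3d) - 8\gamma/3$, all modulo corrections of size $O(\gamma/d)$. The one-sided inequalities are simultaneously feasible precisely inside a window for $\gamma$ that is opened by the hypothesis $T^{-\alpha/20 + n/(10d)} < \beta < T^{-8n/d}$; this window is a strict subset of the naive feasibility region $\gamma\in (0,\alpha/14-n/d)$, calibrated so that the $O(\gamma/d)$ errors and the $\log(1/\rho)$ factor hidden in Proposition \ref{main_midrange} are absorbed. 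Any $\mu$ in this window automatically satisfies $\rho > T^{-\alpha/2}$, so Proposition \ref{main_low_high} applies.

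Finally, I would verify that the error $R$ can be made negligible: under $\gamma > 8n/d$ and the chosen $\mu$, both $\beta^{-1}=T^\gamma$ and $\beta^3\rho T = T^{1-3\gamma-\mu}$ exceed $T^\delta$ for some fixed $\delta>0$. Since the exponent $1/|o(1)|$ can be taken arbitrarily large, the prefactor $T^{\alpha d}$ is swamped and $R = o(1)$. Putting everything together yields $\lambda_{S,x_0,\beta}(f) \ll \beta^{1/8}$.

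The main obstacle I expect is the careful bookkeeping of the lower-order corrections: the $O(\gamma/d)$ slop in every inequality, the $\log(1/\rho)$ factor from Proposition \ref{main_midrange}, and the uniform implicit constants depending on $S$ (which enter through the shape of $\Phi$ and $\Phi^*$ in the stationary phase expansion). The target exponent $1/8$ is fixed rather than optimized, so the numerical window for $(\gamma,\mu)$ must be verified to contain a genuine solution with room to spare—this is exactly why the stated hypothesis on $\beta$ is visibly tighter than the naive feasibility region computed above.
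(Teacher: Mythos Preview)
Your proposal is correct and follows the same route as the paper: add the bounds from Propositions \ref{main_low_high} and \ref{main_midrange} and choose $\rho$ so that all three terms are $\ll\beta^{1/8}$, then check that $R=o(1)$ on the stated range of $\beta$. The only difference is one of presentation: the paper simply sets $\rho=\beta^3$ and reads off the resulting bound $\lambda_{S,x_0,\beta}(f)\ll \beta^3+\beta^{d/4}T^n+T^{n-\alpha d/2}\beta^{-9d}(1+R)$, verifying directly that each term is $\ll\beta^{1/8}$ on the given window, whereas you frame it as a feasibility problem in the exponents $(\gamma,\mu)$---but the specific choice $\mu=3\gamma$ lies comfortably inside your feasible region, so the two arguments are really the same.
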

\begin{proof}
From Propositions \ref{main_low_high} and \ref{main_midrange}, by picking $\rho=\beta^{3/2\alpha}$, we get that
\[
 \lambda_{S,x_0,\beta}(f)\ll  \beta^{3/2\alpha} +\beta^{d/4} T^{n}  + T^{n-\frac{\alpha}2 d} \beta^{-4d}\beta^{-9d/4\alpha} [1+R]
\]
with $R\ll T^{\alpha d} \min(\beta^{-1},\beta^{3+3/2\alpha} T)^{-1/|o(1)|}$, for every $x_0$ at distance $O(\beta)$ from a $c\beta$-curved point of $S$, with $\beta>T^{-\alpha^2/3}$. The result follows in the range chosen for $\beta$.
\end{proof}

Theorem \ref{main_real} finally follows from Propositions \ref{main_local} and \ref{local_to_global}, together with Proposition \ref{main_singular}. {\color{black} More precisely, in the singular case Proposition \ref{main_singular} implies Theorem \ref{main_real}, since $\alpha^2 d/200\le d/200\le d/4$; in the mixing case, the bound $n\le \alpha^2 d/200$ from the statement of Theorem \ref{main_real} allows us to apply Proposition \ref{main_local} with $\beta=T^{-8\frac nd}$, so that
\[
 \lambda_{S,x_0,\beta}(f)\ll \beta^{1/8}=T^{-\frac nd}
\]
for any $x_0$ at distance $O(\beta)$ for a $c\beta$-curved point of $S$. Then, by applying Proposition \ref{local_to_global} with $\beta=T^{-8n/d}$, $\epsilon=\beta^{1/8}$ and $\gamma=c\beta$ we deduce that $\lambda_S(f)\ll \beta^{1/8}+\beta^{1/c_S}$, so Theorem \ref{main_real} follows. 
}

\section*{Acknowledgements}

I worked on this paper as a result of an invitation by E. Lindenstrauss to stay for four months at The Einstein Institute of Mathematics, at the Hebrew University of Jerusalem. I would like to thank him for his help with the problem as well as the people at the Institute for the great conditions to work there.

I would also like to thank F. Chamizo for his comments on the paper, which improved it substantially, and to the referee for careful reading of the manuscript and  for pointing out to me the possibility of proving Theorem \ref{main} for general quotients of $\text{SL}_2(\mathbb R)^d$.

\bibliographystyle{alpha}
\bibliography{submanifolds}

\end{document}